\documentclass{agn_article}
\usepackage{agn_all}

\usepackage[hidelinks]{hyperref}
\usepackage[percent]{overpic}
\usepackage{subcaption}
\usetikzlibrary{arrows.meta,positioning,calc}

\renewcommand{\Symp}{\Sym^{++}}
\newcommand{\TBiot}{T_{\rm Biot}}
\newcommand{\TBiotTilde}{\Ttilde_{\rm Biot}}
\newcommand{\tBiot}{t_{\rm Biot}}
\newcommand{\tBiotTilde}{\ttilde_{\rm Biot}}

\renewcommand{\todo}[1][]{\textcolor{blue}{TODO\ifx\empty#1\empty\else: #1\fi}}

\allowdisplaybreaks[1]
\title{Vector and matrix monotonicity characterisation of the ``load forces'' in nonlinear elasticity: compressible versus incompressible response}
\knownauthors[ghiba]{vossd,ghiba,martin_e,neff}
\date{\today}

\begin{document}
	\maketitle
	\begin{abstract}
		\noindent In this paper, we study the relation between different notions of monotonicity for isotropic hyperelastic compressible and incompressible materials. We study how to define the monotonicity concept on the manifold
		$\rm{Sym}^{++}(n)\cap \rm{SL}(n)$
		of positive definite symmetric tensors with unit determinant and the corresponding spectral manifold $\mathcal S=\left\{(\lambda_1,\dotsc,\lambda_n)\in \mathbb{R}^n_+\,\middle|\, \lambda_1\.\lambda_2\cdots\lambda_n=1\right\}$, and give a precise characterisation of monotonicity in the case of hyperelastic materials. In particular, we consider the monotonicity properties of the Biot stress tensor $T_{\rm Biot}(U)$, the engineering stress, or the load tensor.
	\end{abstract}
	{\textbf{Key words:} nonlinear elasticity, finite isotropic elasticity, incompressibility, Biot stress, isotropic tensor functions, vector monotonicity, matrix monotonicity, intrinsic monotonicity, logarithmic strain, constitutive inequalities, hyperelasticity, convexity}\\[.65em]
	\noindent {\bf MSC 2020 subject classification: 74B20, 74A20, 74A10}
	{\parskip=-0.4mm\tableofcontents}
	\section{Introduction}
	Convexity conditions on the elastic energy as well as monotonicity or invertibility conditions in certain stress-strain pairs play an important role in the modeling of nonlinear
	elasticity. They are used as constitutive restrictions implying a number
	of useful properties \cite{ball1976convexity,agn_voss2026monotonicity,Ogden83,Hill68,hill1968constitutiveb,hill1970,agn_wollner2026concurrent,agn_ghiba2026polyconvexity,agn_voss2019volIsLog}.
	While, a priori, it is not always clear in which variables convexity or
	monotonicity should be required, for compressible elastic materials we focus here on the convexity of
	the elastic energy $U\mapsto \What(U)$, in terms of the right stretch tensor $U = \sqrt{F^T F}\in \Symp(n)$, or the matrix-monotonicity of the symmetric Biot stress tensor $\TBiot(U) \colonequals \D_U \What(U)=\det U\.R^T\.\sigma\.R\.U^{-1}\in \Sym(n)$,
	in the sense that\footnote{
		We say that $\What$ is differentiable at $U\in \Symp(n)$
		if there exists a linear mapping $\D_U \What(U)\col \Sym(n)\to\R$ such that $\D_U\What(U).H = \lim_{s\to 0} \frac{\What(U+s\.H)-\What(U)}{s} \ \forall\; H\in \Sym(n)\,.$ The Biot stress tensor satisfies $\iprod{\TBiot(U)\,, H}=\D_U\What(U).H$, for all $U\in \Symp(n)$, $H\in \Sym(n)$.
	}
	\begin{align}
		\label{cc}
		\iprod{\TBiot(U)-\TBiot(\Ubar)\,,
			U-\Ubar}>0
		\qquad
		\forall \; U,\Ubar \in \Symp(n)\,,\quad {U\neq \Ubar},
	\end{align}
	where $\sigma$ is the Cauchy stress tensor and $\Symp(n)$ denotes the set of positive definite symmetric $n\times n$ matrices.
	The energy $U\mapsto \What(U)$ represents the reformulation of an isotropic elastic energy $W\col \GLp(n)\to \R_+$ in terms of the stretch tensor $U$, i.e.\ $W(F)=\What(U).$
	In isotropic nonlinear elasticity, the Biot stress corresponds to the ``loads'', and \eqref{cc} immediately implies injectivity of the load--stretch relation in terms of the principal stretches $\lambda_i=\lambda_i(\sqrt{F^TF})$.
	For compressible response, condition \eqref{cc} excludes, for instance,
	certain material instabilities like the Rivlin cube bifurcation \cite{agn_ghiba2025biot} and it is implicitly used in ADINA$^{\text{TM}}$ as a stability marker \cite{baaser2026hyperelastic}. The geologist G.F. Becker has already implicitly imposed a monotonicity condition in terms of $\log U$ in his 1893 attempt to derive an ideal isotropic elasticity law \cite{becker1893,agn_neff2014rediscovering}. He proposed
	\begin{align}
		\TBiot(U)=\widetilde{T}_{\rm Biot}(U)=2\.\mu\log U+\lambda\.\tr(\log U)\.\id\,,
	\end{align}
	not being aware that this constitutive law is not hyperelastic in general.
	It is worth noting that the convexity of $U\mapsto \What(U)$ is not, in general, in conflict with polyconvexity and $\What(U)\to +\infty$ for $\det F\to 0$, as the example \cite{agn_wollner2026concurrent,agn_ghiba2026polyconvexity}
	\begin{align}
		\label{clog}
		W(F)=\mu\left(\frac{1}{2}\|F\|^2-\log \det F\right)=\mu\left(\frac{1}{2}\|U\|^2-\log \det U\right)=\mu\left(\frac{1}{2}\| e^{\log U}\|^2-\tr \log U\right)
	\end{align}
	shows. Indeed, the term $-\log \det U$ is convex in $U$ on $\Symp(n)$ \cite{agn_lehmich2013convexity}, and the Biot stress derived from \eqref{clog} satisfies therefore \eqref{cc}. In many cases, however, one considers
	slightly compressible response, for example in the modelling of rubber.
	If one assumes that $\What(U)$ grows large as $\det F$ moves away from
	1, then neither convexity of $\What$ with respect to $U$, nor
	monotonicity of $\TBiot$ with respect to $U$, can in general be
	expected \cite{ball1976convexity}, see, e.g.\ Figure \ref{CNP}.
	Therefore, both convexity and monotonicity notions alluded to above are inappropriate if
	considered directly for the incompressible case $\det F = 1 \ {(\det U=1)}.$ In this contribution, therefore, we will give an answer on how to deal with
	incompressibility in conjunction with these convexity and monotonicity concepts.
	In order to do that, we start with the unconstrained, compressible case.
	Since we are dealing with isotropic nonlinear elasticity, it is useful to
	consider the relation between vector and matrix monotonicity first. It then becomes
	evident that, on the constraint set $\det F=1$, a naive transfer of monotonicity
	concepts is not possible\footnote{This is one of the reasons why the Coleman--Noll inequality \cite{coleman1959thermostatics} is not regarded as a useful general constitutive condition in the incompressible setting.} and one needs to resort to intrinsic notions.
	We carry out this program below, leading to a preference for constitutive conditions expressed in the logarithmic strain tensor $\log U$, see, for instance, \cite{agn_neff2015exponentiatedI,neff2024hypo,agn_neff2015geometry}. The decisive advantage of the logarithmic representation is that the constraint set
	\begin{align}
		\det F = 1
		\qquad \iff \qquad \tr(\log U)=0
	\end{align}
	is a convex (linear) set in logarithmic strain space and therefore allows the standard local--global monotonicity arguments to be transferred from the compressible to the incompressible response. The logarithmic monotonicity condition considered below is, however, a different constitutive requirement and not the direct restriction of classical Biot monotonicity with respect to $U$.
	Let us briefly return to the question whether the Coleman--Noll inequality \cite{coleman1959thermostatics}, which implies \eqref{cc}, is suitable for slightly compressible materials.
	To this end, assume that $\What$ is still defined on $\Symp(n)$ but grows large when $\det U$ deviates slightly from $1$. Then the resulting energy will not satisfy that $U\mapsto \What(U)$ is convex on $\Symp(n)$, see Figure \ref{CNP}.\footnote{Already Hill \cite[p. 457]{hill1970} has observed that the Coleman-Noll inequality would rule out the incompressible Neo-Hooke formulation.}
	\begin{figure}[h!]
		\centering
		\begin{subfigure}[t]{0.47\linewidth}
			\centering
			\begin{overpic}[width=\linewidth]{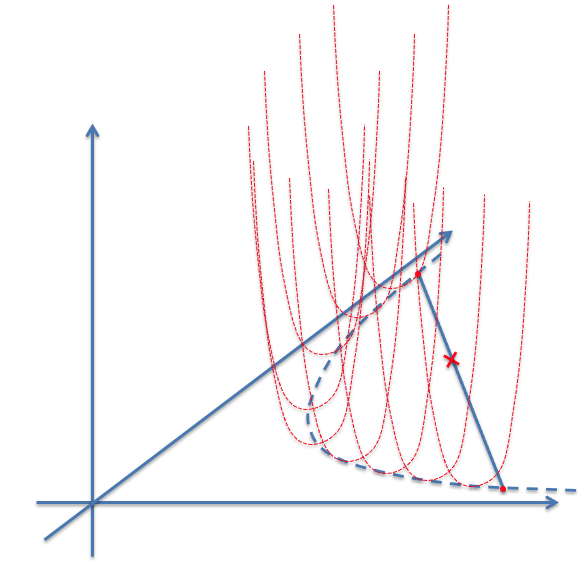}
				\put(95,5){\footnotesize $\lambda_1$}
				\put(70,60){\footnotesize $\lambda_2$}
				\put(45,90){\footnotesize $W$}
				\put(90,15){\footnotesize $\det F=1$}
			\end{overpic}
			\caption{Convexity properties may change dramatically under a nonlinear side-condition like $\det F=1$ as shown in the picture: A non-negative function which vanishes exactly for $\det F=\lambda_1\lambda_2=1$ cannot be convex.}
			\label{CNP}
		\end{subfigure}
		\hfill
		\begin{subfigure}[t]{0.47\linewidth}
			\centering
			\begin{overpic}[width=\linewidth]{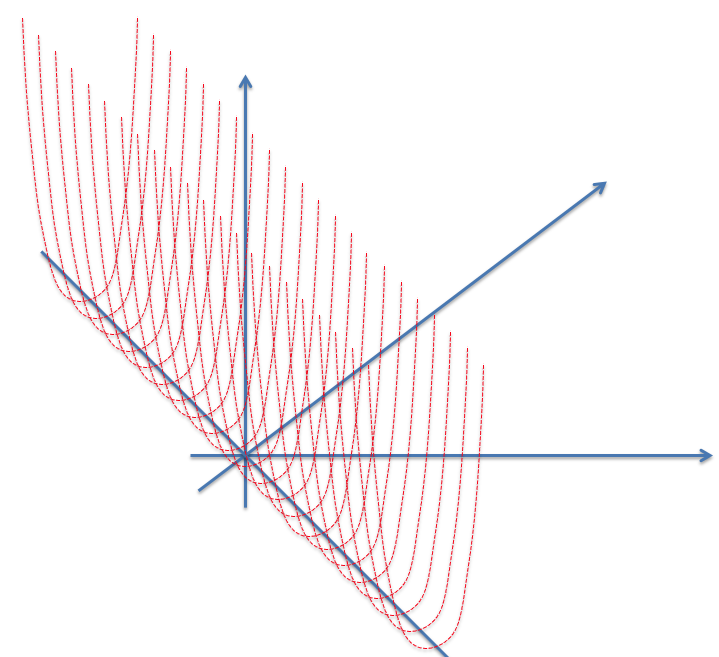}
				\put(97,22){\footnotesize $\log \lambda_1$}
				\put(80,70){\footnotesize $\log \lambda_2$}
				\put(48,65){\footnotesize $W$}
				\put(65,1){\footnotesize $\tr(\log V)=0$}
			\end{overpic}
			\caption{The function $W_{\rm eH}$ is convex in $\log V$ under the nonlinear side-condition $\det F=1$, since $\det F=1 \iff \tr(\log V)=0$.}
			\label{CNPlog}
		\end{subfigure}
		\caption{Comparison between convexity in the singular values and convexity in logarithmic strain variables under the incompressibility constraint.}
		\label{fig:CNP-comparison}
	\end{figure}
	
	A convexity condition on the map $\log V\mapsto \Wtilde(\log V)$ with $\Wtilde(X)=\What(e^X)$ does not suffer from the same problem, since the side condition $\det V=1$ is equivalent to $\tr(\log V)=0$, which defines a linear set, and we may easily write down an energy convex in $\log V$ which grows large outside $\det F=1$, e.g.\ \cite{agn_neff2015exponentiatedI}
	\begin{align}
		W_{\rm eH}(\log V)\colonequals \frac{\mu}{k}\.e^{k\.\|\dev_3\log\.V\|^2}+\frac{\kappa}{2\. \khat}\.e^{\khat\. {[\tr(\log V)]^2}}.
	\end{align}
	This energy is convex in $\log V$ for $k,\khat>0$, see Figure \ref{CNPlog}.
	Let us specify the meaning we use for the Biot stress for incompressible materials. We start by considering an isotropic energy $W\col \GLp(n)\to \R_+$, $W=W(F)$ and its representation $\What(U)=W(F)$ in terms of the right stretch tensor. Then we compute the $\TBiot$-stress using the formula for compressible materials and, finally, define its restriction to $\mathcal M=\{U\in\Symp(n)\mid \det U=1\}$ by
	\begin{align}
		\TBiot^{\rm ext,0}\col \mathcal M\to \Sym(n)\,, \qquad \quad \TBiot^{\rm ext,0}(U)\colonequals \D_U\What(U)\in\Sym(n) \quad \forall \; U\in \mathcal M\,.
	\end{align}
	The superscript $\mathrm{ext},0$ indicates that this is the ambient, zero-pressure representative obtained by first differentiating the energy on $\Symp(n)$ and then restricting the resulting tensor field to $\mathcal M$. The tensor $\TBiot^{\rm ext,0}(U)$ itself is not tangent to $\mathcal M$; rather, its action on tangent directions represents the differential of the constrained energy. However, $\TBiot^{\rm ext,0}$ depends on the chosen ambient extension of the incompressible energy and is therefore not, by itself, an intrinsic constitutive quantity of the constrained material. Two such ambient derivatives can differ on $\mathcal M$ by a normal contribution $c(U)\.U^{-1}$.
	An extension-independent representative is obtained by requiring the associated Kirchhoff stress to be deviatoric, namely
	\begin{align}
		\TBiot^{\rm inc}(U)\colonequals \D_U\What(U)-\frac{1}{n}\tr\bigl(\D_U\What(U)\,U\bigr)U^{-1}.
	\end{align}
	We refer to Appendix \ref{eiA} for a detailed discussion of intrinsic versus extrinsic Biot stresses and the extension-independence of $\TBiot^{\rm inc}$. The full extrinsic Biot-stress tensor usually defined in the incompressible case is, cf.\ \cite[Eq. (2.81)]{ogden2001elements},
	\begin{align}
		\TBiot^{\rm ext}(U)\colonequals \D_U[ \What(U)-p\.(\det U-1)]=\underbrace{\D_U\What(U)}_{\rm constitutive \ stress}-\underbrace{p\. U^{-1}}_{\rm reaction-stress}\in\Sym(n)
	\end{align}
	where $p$ is an arbitrary scalar Lagrange multiplier to be determined from the equilibrium equations and the boundary conditions. The zero-pressure and full extrinsic representatives satisfy
	\begin{align}
		\iprod{\TBiot^{\rm ext,0}(U)\,, H}=\D_U\What(U).H=\iprod{\TBiot^{\rm ext}(U)\,, H}, \quad & \text{for all} \quad U\in \mathcal M \\& \text{and} \quad H\in T_U\mathcal M=\{H\in\Sym(n)\.|\ \tr(U^{-1}H)=0\}.\notag
	\end{align}
	We use the term extrinsic for quantities defined using energies on the entire ambient space $\Sym(n)$ or on its open subspaces. In the following, $\TBiot^{\rm ext,0}$ denotes the \emph{zero-pressure extrinsic Biot-stress tensor for incompressible materials}, while $\TBiot^{\rm ext}$ denotes the full \emph{extrinsic Biot-stress tensor for incompressible materials}. The intrinsic representative $\TBiot^{\rm inc}$ is defined above and discussed further in Appendix \ref{eiA}.
	It turns out that, while in the compressible case the monotonicity of the Biot stress $\TBiot$ is defined by the inequality \eqref{cc}, in the incompressible case we consider the following (non-equivalent) logarithmic monotonicity requirement for the zero-pressure extrinsic Biot-stress tensor:
	\begin{align}
		\label{logW}
		\iprod{\TBiot^{\rm ext,0}(U)-\TBiot^{\rm ext,0}(\Ubar)\,, \log U -\log \Ubar} > 0, \quad & \text{for all}\quad U, \Ubar\in \Symp(n)\,, \ \ U\neq \Ubar \ \notag \\& \text{with} \ \ \tr(\log U)=0=\tr(\log\Ubar)\,.
	\end{align}
	The pairing in \eqref{logW} is a mixed stress--strain pairing: the Biot stress is energetically conjugate to $U$, whereas the corresponding Kirchhoff stress is naturally conjugate to $\log U$. Thus, \eqref{logW} is motivated by the logarithmic parametrization of the constraint rather than by energetic conjugacy.
	For a prescribed pressure selection $p=p(U)$, it is mathematically meaningful to consider the same definition for the full extrinsic Biot-stress tensor, i.e.
	\begin{align}
		\label{logWf}
		\iprod{\TBiot^{\rm ext}(U)-\TBiot^{\rm ext}(\Ubar)\,, \log U -\log \Ubar} > 0, \quad & \text{for all}\quad U, \Ubar\in \Symp(n)\,, \ \ U\neq \Ubar \ \notag \\& \text{with} \ \ \tr(\log U)=0=\tr(\log\Ubar)\,,
	\end{align}
	but in this condition the unknown Lagrange multiplier will be involved. Therefore, such a monotonicity requirement does not depend only on the material parameters and is not a purely constitutive requirement. Without fixing the pressure selection, $\TBiot^{\rm ext}$ represents a family of stress tensors rather than a uniquely defined tensor-valued mapping.

	From the entire class of possible extrinsic Biot-stress tensors, we choose a special one, i.e.\ plane-stress
	\begin{align}
		\TBiot^{\rm ps}(U)\colonequals\D_U\What(U)-\iprod{\D_U\What(U)\.U,N_3} U^{-1},
	\end{align}
	with $N_3$ denoting the projector associated with the distinguished third principal direction.\footnote{See the spectral decomposition of $U=\sum_{i=1}^3\lambda_i N_i, \ N_i=q_i\otimes q_i.$}
	Since, for incompressibility, $\tr(\log U)=0$, the monotonicity of the Biot-stress tensors can in some situations be characterized as follows. For incompressible nonlinear elasticity, the convexity of the reduced energy potential is sometimes considered \cite{anssari2023large,anssari2022modelling}.
	More precisely, consider $n=3$ and $\What(U)=g(\lambda_1,\lambda_2,\lambda_3)$. Since $\lambda_3=\frac{1}{\lambda_1\lambda_2}$, one defines $g^{\rm red}(\lambda_1,\lambda_2)\colonequals g(\lambda_1,\lambda_2,\frac{1}{\lambda_1\lambda_2})$.
	The zero-pressure case is discussed in Subsection~\ref{smBt}. For the plane-stress normalization considered in Subsection~\ref{smB}, the first two principal Biot stresses are given by the derivatives of the reduced energy
	\begin{align}
		t_1^{\rm red}(\lambda_1,\lambda_2)\colonequals \frac{\partial }{\partial \lambda_1}g^{\rm red}(\lambda_1,\lambda_2)\,,
		\qquad \qquad
		t_2^{\rm red}(\lambda_1,\lambda_2)\colonequals \frac{\partial }{\partial \lambda_2}g^{\rm red}(\lambda_1,\lambda_2)\,.
	\end{align}
	Hence, $t^{\rm red}=(t_1^{\rm red},t_2^{\rm red})$ represents the reduced plane-stress load mapping.
	In order to exclude the so-called Treloar-Kearsley instability \cite{batra2005treloar}, i.e.\ a
	square rubber sheet may turn into a rectangular shape upon equal dead loading on both sides, one may demand strict convexity of $g^{\rm red}$ or, less restrictively, injectivity of
	\begin{align}
		\label{invc}
		(\lambda_1,\lambda_2)\mapsto \matr{t_1^{\rm red}(\lambda_1,\lambda_2) \\t_2^{\rm red}(\lambda_1,\lambda_2)}.
	\end{align}
	A necessary condition for convexity of $g^{\rm red}$ is that its sublevel sets are convex. This is used as a graphical stability\footnote{ADINA$^\text{TM}$ is performing such a test for rubber models.} indicator, e.g.\ in \cite{anssari2023large,anssari2022modelling}.
	While injectivity of \eqref{invc} is well-defined also in the context of incompressibility, convexity of $g^{\rm red}$ in
	terms of $(\lambda_1,\lambda_2)$ does not have a natural intrinsic meaning. This will be clarified below.
	In terms of the principal stretches, we use $g^{\rm red}(\lambda_1,\lambda_2)$ as above and consider
	\begin{align}
		\gtilde_{\rm red}(\log \lambda_1,\log \lambda_2)\colonequals g^{\rm red}(e^{\log \lambda_1},e^{\log \lambda_2})=g(e^{\log \lambda_1},e^{\log \lambda_2},e^{-(\log \lambda_1+\log \lambda_2)})
	\end{align}
	and the monotonicity conditions
	for
	the vector defined in \eqref{invc}, now in terms of the logarithmic stretches. Under the assumption that the third extrinsic Biot principal stress vanishes, one must check whether\footnote{The derivatives do not commute, as shown by the following simple one-dimensional counterexample for $g(\lambda)=\frac12\lambda^2-3\lambda, \ \lambda>0.$
		Let $z=\log\lambda, \ \ghat(z)=g(e^z).$ Then $\dd z\left(\ddd{g}{\lambda}\right) = \lambda g''(\lambda).$ Since $g'(\lambda)=\lambda-3, \ g''(\lambda)=1,$ we obtain $\dd z\left(\ddd{g}{\lambda}\right) = \lambda>0 \ \forall \lambda>0.$ However, $\dd\lambda\left(\ddd{\ghat}{z}\right) = \dd\lambda\left(\lambda g'(\lambda)\right) = g'(\lambda)+\lambda g''(\lambda).$ Therefore $\dd\lambda\left(\ddd{\ghat}{z}\right) = (\lambda-3)+\lambda = 2\lambda-3.$ At $\lambda=1$, this gives $2\lambda-3=-1<0.$ Thus $\dd z\left(\ddd{g}{\lambda}\right)>0$ but $\dd\lambda\left(\ddd{\ghat}{z}\right)<0 \ \text{at } \lambda=1.$ Consequently, the implication $\ddd{g}{z}\left(\ddd{g}{\lambda}\right)>0 \ \to \dd{\lambda}\left(\ddd{\ghat}{z}\right)>0$ is false in general.}
	\begin{align}
		\label{110}
		\sym \D_{(\log \lambda_1,\log \lambda_2)} & [\D_{(\lambda_1,\lambda_2)} g^{\rm red}( \lambda_1, \lambda_2)] {=\sym \. [(\D^2_{(\lambda_1,\lambda_2)} g^{\rm red}( \lambda_1, \lambda_2))\diag (\lambda_1,\lambda_2)]}\in \Symp(2)\,.
	\end{align}
	Since $\R^2$ is convex, condition \eqref{110} implies that $x\mapsto \D_{(\lambda_1,\lambda_2)}g^{\rm red}(e^{x_1},e^{x_2})$ is strictly monotone and hence injective. As $(x_1,x_2)\mapsto(e^{x_1},e^{x_2})$ is bijective, the reduced load mapping \eqref{invc} is injective as well.
	Finally, the related (but non-equivalent) convexity condition on
	\begin{align}
		\label{111}
		(\log \lambda_1,\log \lambda_2)\mapsto \gtilde_{\rm red}(\log \lambda_1,\log \lambda_2)\,,
	\end{align}
	i.e.\ for twice differentiable energies $\D^2_{(\log\lambda_1,\log \lambda_2)}\gtilde_{\rm red}(\log \lambda_1,\log \lambda_2)\in \Sym^{+}(2)$, is equivalent to Hill's inequality for incompressible response \cite{agn_wollner2026concurrent,agn_ghiba2026polyconvexity,baaser2026hyperelastic}.
	The latter condition is related to Hill's inequality
	\cite{agn_voss2026monotonicity,Ogden83,Hill68,hill1968constitutiveb,hill1970,agn_wollner2026concurrent}, while the former is a novel
	requirement that can still serve the purpose of excluding Treloar-Kearsley type instabilities, if desired, see, for example,
	\cite{anssari2023large,anssari2022modelling,batra2005treloar}.

	The paper is now organised as follows. In Section \ref{se1} we define matrix- and vector-monotonicity and recall their equivalence from \cite{agn_voss2026monotonicity}. Then, in Section \ref{se3} we consider matrix- and vector-monotonicity for incompressible materials. After clarifying the correct form of these monotonicity concepts for incompressible materials, we give alternative characterisations and show that they are equivalent. A similar proof of the non-strict spectral monotonicity implication, originating from an earlier version of the present manuscript, is retained in Appendix \ref{app:alternative-spectral-proof}.
	\section{Monotonicity requirements in nonlinear elasticity of compressible elastic materials}\setcounter{equation}{0}\label{se1}
	We consider the general isotropic constitutive equation
	\begin{equation}
		\Sigma_f=\Sigma_f(U)\,, \qquad \Sigma_f\col \Symp(n)\to \Sym(n)\,,
	\end{equation}
	where $\Sigma_f$ denotes a symmetric stress tensor and $U\in\Symp(n)$ is the positive definite stretch tensor. Since direct calculations with tensors (matrices) in three dimensions can be cumbersome, we consider the singular values (the principal stretches) $\lambda_1,\dotsc,\lambda_n$ of $F$, i.e.\ the positive eigenvalues of $U$.
	If $\Sigma_f\col\Symp(n)\to\Symn$ is an isotropic tensor function satisfying
	\begin{align}
		\Sigma_f(Q^T\.\diag(\lambda_1,\dotsc,\lambda_n)\. Q)=Q^T\.\Sigma_f(\diag(\lambda_1,\dotsc,\lambda_n))\. Q\,,
	\end{align}
	then, for all $U=Q^T\diag(\lambda_1,\dotsc,\lambda_n)\.Q$ it holds
	\begin{align}
		\Sigma_f(U)\colonequals
		\Sigma_f(\underbrace{Q^T\diag(\lambda_1,\dotsc,\lambda_n)\. Q}_{U\.\in\Symp(n)})
		=
		\underbrace{Q^T\diag(f(\lambda_1,\dotsc,\lambda_n))\. Q}_{\Sigma_f(U)\.\in\Sym(n)}
		\label{eq:introductionMatrixFunction}
	\end{align}
	with a vector-function $f=(f_1,\dotsc,f_n)\col\Rp^n\to\R^n$ which satisfies
	\begin{align}
		f_i(\lambda_{\pi(1)},\lambda_{\pi(2)},\dotsc,\lambda_{\pi(n)}) = f_{\pi(i)}(\lambda_1,\dotsc,\lambda_n)
	\end{align}
	for any permutation $\pi\col\{1,\dotsc,n\}\to\{1,\dotsc,n\}$. Here, $\Symn$ denotes the space of symmetric $n\times n$ matrices, $\On$ is the orthogonal group and $\diag(\lambda_1,\dotsc,\lambda_n)$ is the diagonal matrix with diagonal entries $\lambda_1,\dotsc,\lambda_n\,.$ The permutation symmetry is equivalent to isotropy of the spectral representation.
	Indeed, {in the compressible case}, in many situations the stress-strain relations are conveniently characterized by the relation between their corresponding principal values, i.e.\ by the relations between the principal stretches $\lambda_1, \lambda_2, \dotsc,\lambda_n$ and the principal forces (principal Biot-stresses)
	\begin{align}
		\label{BT}t_i= \displaystyle\frac{\partial g(\lambda_1,\dotsc,\lambda_n)}{\partial \lambda_i}\,,\qquad i=1,\dotsc,n\,.
	\end{align}
	In \eqref{BT}, $g\colon\R_+^n\to \R$ is the unique permutation symmetric function of the singular values of $U$ (principal stretches) such that $W(F)=\What(U)=g(\lambda_1,\dotsc,\lambda_n)$, where $\R_+^n=(0,\infty)^n$.
	For instance, for the Biot stress tensor defined by
	\begin{align}
		\TBiot=\D_U \What(U)\,,
	\end{align}
	the following {principal stresses} are used
	\begin{align}
		\tBiot=\matr{t_1, & t_2, & \dotsc & t_n}^T.
	\end{align}
	Going back to the general case of an arbitrary isotropic tensor function, the functions $f$ and $\Sigma_f$ related by eq.~\eqref{eq:introductionMatrixFunction} share a number of properties related to injectivity and monotonicity, as we will see in the next sections.
%
%
%
	\subsection{Matrix monotonicity on $\Symp(n)$ versus vector monotonicity on $\R_+^n$}\label{ips2}
	For our purposes, we now recall some related notions of monotonicity.
	\begin{definition}\cite{agn_martin2015some}
		A tensor function $\Sigma_f\col\Symp(n)\to\Symn\.$ is called \emph{strictly matrix-monotone} if
		\begin{align}
			\iprod{\Sigma_f(U)-\Sigma_f(\Ubar)\,,\.U-\Ubar}_{\R^{n\times n}}>0\qquad\forall\;U\neq\Ubar\in\Symp(n)\,.\label{eq:introductionMatrixMonotonicity}
		\end{align}
		We refer to this inequality as \emph{strict matrix-monotonicity} of the tensor function $\Sigma_f$. A tensor function $\Sigma_f\col\Symp(n)\to\Symn\.$ is called \emph{matrix-monotone} if
		\begin{align}
			\iprod{\Sigma_f(U)-\Sigma_f(\Ubar)\,,\.U-\Ubar}_{\R^{n\times n}}\geq 0\qquad\forall\;U,\.\Ubar\in\Symp(n)\,.\label{eq:introductionMatrixMonotonicity1}
		\end{align}
	\end{definition}
	\begin{definition}\cite{agn_martin2015some}
		A vector function $f\col\R^n_+\to\R^n$ is \emph{strictly vector-monotone} if
		\begin{align}
			\iprod{f(\lambda)-f(\lambdabar)\,,\.\lambda-\lambdabar}_{\R^n}>0\qquad\forall\;\lambda\neq\lambdabar\in\R^n_+\,,
		\end{align}
		and it is \emph{vector-monotone} if
		\begin{align}
			\iprod{f(\lambda)-f(\lambdabar)\,,\.\lambda-\lambdabar}_{\R^n}\geq 0\qquad\forall\;\lambda, \lambdabar\in\R^n_+\,,
		\end{align}
	\end{definition}
	\begin{definition}
		A continuously differentiable tensor function $\Sigma_f\col\Symp(n)\to\Symn\.$ is called \emph{strictly differentially matrix-monotone} if
		\begin{align}
			\iprod{\D\Sigma_f(U).H,H} > 0 \qquad\forall\;{U\in\rm Sym}^{++}(n)\,,\;H\in\Sym(n) {\setminus\{0\}}
			\,.
		\end{align}
	\end{definition}
	\begin{definition}\cite{agn_martin2015some}
		A continuously differentiable vector function $f\col\R^n_+\to\R^n$ is called \emph{strictly differentially vector-monotone} if
		\begin{align}
			\iprod{\D f(\lambda).h,h} > 0 \qquad\forall\;\lambda\in\R^n_+\,,\;h\in\R^n {\setminus\{0\}}
			\,.
		\end{align}
	\end{definition}
	Note that $\D f\.(\lambda_1,\dotsc,\lambda_n)$ need not be symmetric. However, for $t=(t_1,\dotsc,t_n)^T=\D g(\lambda_1,\dotsc,\lambda_n)$, we have
	\begin{align}
		\D t(\lambda_1,\dotsc,\lambda_n)
		=
		\D^2 g(\lambda_1,\dotsc,\lambda_n)
		\colonequals
		\left(
		\frac{\partial^2 g}{\partial\lambda_i\partial\lambda_j}
		\right)_{i,j=1,\dotsc,n}
		\in\Sym(n)\,.
	\end{align}
	In \cite{agn_voss2026monotonicity} we discuss in detail the following result, thereby expanding on Ogden's work \cite[last page in the Appendix]{Ogden83}, following Hill's seminal contributions \cite{Hill68,hill1968constitutiveb,hill1970}. For a permutation-invariant set $M\subset\R^n$, we denote by $\Sym(M)\subset\Sym(n)$ the set of symmetric matrices whose eigenvalues belong to $M$.
	\begin{theorem}
		\label{theorem:mainResult}
		A symmetric function $f\col M\subset\R^n\to\R^n$ is (strictly) vector-monotone on $M$ if and only if $\Sigma_f$ is (strictly) matrix-monotone on $\Sym(M)$.
	\end{theorem}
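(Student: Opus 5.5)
The plan is to prove the two directions separately. The key tool is a trace inequality of von Neumann type, applied in a form adapted to the monotone rearrangement structure of symmetric functions.

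\emph{Matrix-monotone implies vector-monotone.} This direction is immediate. Given $\lambda,\lambdabar\in M$, set $U=\diag(\lambda)$ and $\Ubar=\diag(\lambdabar)$. Both lie in $\Sym(M)$, and by \eqref{eq:introductionMatrixFunction} we have $\Sigma_f(U)=\diag(f(\lambda))$ and $\Sigma_f(\Ubar)=\diag(f(\lambdabar))$. Hence
\[
\iprod{\Sigma_f(U)-\Sigma_f(\Ubar),U-\Ubar}=\iprod{f(\lambda)-f(\lambdabar),\lambda-\lambdabar}_{\R^n},
\]
and $U\neq\Ubar$ if and only if $\lambda\neq\lambdabar$. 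The strict case follows in the same way.

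\emph{Vector-monotone implies matrix-monotone.} Write $U=Q^T\diag(\lambda)Q$ and $\Ubar=\Qbar^T\diag(\lambdabar)\Qbar$, and put $P=Q\Qbar^T\in\On$. Expanding the inner product gives
\[
\iprod{\Sigma_f(U)-\Sigma_f(\Ubar),U-\Ubar}=\iprod{f(\lambda),\lambda}+\iprod{f(\lambdabar),\lambdabar}-\sum_{i,j}P_{ij}^2\bigl(f_i(\lambda)\lambdabar_j+\lambda_i f_j(\lambdabar)\bigr).
\]
The matrix $S=(P_{ij}^2)$ is doubly stochastic, so by Birkhoff's theorem it is a convex combination of permutation matrices. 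The expression is affine in $S$, so it suffices to check nonnegativity at permutation matrices $S_\pi$. At such a vertex the expression equals
\[
\iprod{f(\lambda),\lambda}+\iprod{f(\lambdabar),\lambdabar}-\iprod{f(\lambda),\pi\lambdabar}-\iprod{\pi\lambda',\dots}.
\]
Using the permutation symmetry $f(\pi\lambdabar)=\pi f(\lambdabar)$, this rewrites exactly as
\[
\iprod{f(\lambda)-f(\pi\lambdabar),\lambda-\pi\lambdabar},
\]
since $\iprod{f(\pi\lambdabar),\pi\lambdabar}=\iprod{f(\lambdabar),\lambdabar}$ and $\iprod{\lambda,f(\pi\lambdabar)}=\iprod{\lambda,\pi f(\lambdabar)}$. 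Here we use that $M$ is permutation invariant, so $\pi\lambdabar\in M$. Vector monotonicity then gives $\geq0$ at every vertex, and hence for the convex combination. This is the crux, and it is the step where I would be careful with index bookkeeping: the two cross terms must come from the same permutation, which holds because both have coefficient $P_{ij}^2$.

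\emph{Strictness.} This is the main obstacle. Suppose the inner product vanishes. Then every permutation $\pi$ carrying positive Birkhoff weight satisfies $\lambda=\pi\lambdabar$, by strict vector monotonicity. Taking such a $\pi$ in the support, the spectra of $U$ and $\Ubar$ coincide. I then need to conclude $U=\Ubar$, for which I would argue via eigenspaces. Every $\pi$ in the support maps the index set of each eigenvalue value $\mu$ of $\lambdabar$ onto the index set where $\lambda=\mu$. Consequently $S$, and hence $P$, has zero entries between the $\mu$-eigenspace indices of $\Ubar$ and the non-$\mu$ indices of $U$. This means $P$ maps eigenspaces of $\diag(\lambdabar)$ to the corresponding eigenspaces of $\diag(\lambda)$, which gives $P\diag(\lambdabar)P^T=\diag(\lambda)$, i.e. $U=\Ubar$. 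Should the Birkhoff-support argument get messy, I would instead choose a decomposition containing a given permutation via the support of $S$ (König's theorem), ensuring each nonzero entry of $S$ is covered by some permutation in the support.
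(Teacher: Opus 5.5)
Your proof is correct, and it takes a genuinely different route from the paper's argument. The paper does not prove this theorem itself: it quotes it from \cite{agn_voss2026monotonicity}, and its own argument in Appendix~\ref{app:alternative-spectral-proof} covers only the non-strict implication.

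There, the two cross terms $\iprod{a,D.b}$ are estimated \emph{separately}. Each is bounded by its maximum over permutations, which the rearrangement inequality identifies as $\iprod{a^\downarrow,b^\downarrow}$. To reassemble a monotonicity expression, the paper first needs the ordering lemma $(x_i-x_j)(f_i(x)-f_j(x))\geq0$, i.e.\ $f(x)^\downarrow=f(x^\downarrow)$. It ends with a lower bound by $\iprod{f(\lambda^\downarrow)-f(\lambdabar^\downarrow),\lambda^\downarrow-\lambdabar^\downarrow}$.

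You instead keep both cross terms attached to the same doubly stochastic matrix $S=(P_{ij}^2)$. For any Birkhoff decomposition $S=\sum_k c_k S_{\pi_k}$ with $c_k>0$, this yields the exact identity
\[
\iprod{\Sigma_f(U)-\Sigma_f(\Ubar),U-\Ubar}=\sum_k c_k\,\iprod{f(\lambda)-f(\pi_k\lambdabar),\lambda-\pi_k\lambdabar}\,,
\]
so neither the ordering lemma nor the rearrangement inequality is needed.

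More importantly, this identity keeps exactly the information the paper's estimate discards. For isospectral $U\neq\Ubar$ the paper's lower bound is $0$, which is why it defers the strict case. Your identity instead forces $\lambda=\pi_k\lambdabar$ for every $k$. Hence $P_{ij}\neq0\Rightarrow\lambda_i=\lambdabar_j$, so $P\diag(\lambdabar)=\diag(\lambda)P$ and $U=\Ubar$. Your argument therefore settles the strict and non-strict cases at once and is self-contained. What the paper's route buys in exchange is a quantitative bound in terms of sorted spectra and the same-ordering property of $f$, both of independent interest.

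Two cosmetic points. The display ending in $\iprod{\pi\lambda',\dots}$ is garbled and should read $-\iprod{\lambda,\pi f(\lambdabar)}$. The K\"onig fallback is unnecessary: with $c_k>0$ and nonnegative entries, every nonzero entry of $S$ automatically lies on the support of some $S_{\pi_k}$ in the decomposition.
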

	We use Theorem \ref{theorem:mainResult} as a quoted result throughout the remainder of the paper. To retain the earlier argument developed for this manuscript, a similar proof of the non-strict implication is collected in Appendix \ref{app:alternative-spectral-proof}.
	\subsection{Two--point versus differential monotonicity on $\Symp(n)$}
	Recall that $\Symp(n)$ is an open convex subset of the linear space $\Sym(n)$.
	\begin{definition}
		Let $\Sigma\col \Symp(n)\to\Sym(n)$ be a continuously
		differentiable tensor field.
		Two--point monotonicity means
		\begin{equation}
			\label{eq:HilbertMono}
			\iprod{\Sigma(U)-\Sigma(\widetilde{U})\,,\.U-\widetilde{U}} \geq 0
			\qquad \forall\;U,\widetilde{U}\in\Symp(n)\,.
		\end{equation}
		Differential monotonicity means
		\begin{equation}
			\label{eq:DiffMono}
			\iprod{\D\Sigma(U).H,\.H} \geq 0
			\qquad \forall\;U\in\Symp(n)\,,\ \forall\;H\in\Sym(n)\,.
		\end{equation}
	\end{definition}
	\begin{lemma}[Differential monotonicity implies two--point monotonicity]
		If \eqref{eq:DiffMono} holds, then \eqref{eq:HilbertMono} holds.
	\end{lemma}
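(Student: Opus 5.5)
The plan is to restrict $\Sigma$ to line segments, exploiting the fact that $\Symp(n)$ is an open convex subset of $\Sym(n)$, and then apply the fundamental theorem of calculus.

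Fix $U,\widetilde{U}\in\Symp(n)$ and set $H\colonequals U-\widetilde{U}\in\Sym(n)$. By convexity of $\Symp(n)$, the segment $\gamma(s)\colonequals \widetilde{U}+s\.H$, $s\in[0,1]$, lies entirely in $\Symp(n)$. Since $\Sigma$ is continuously differentiable, the scalar function
\begin{align}
	\varphi(s)\colonequals \iprod{\Sigma(\gamma(s))\,,\.H}
\end{align}
is continuously differentiable on a neighbourhood of $[0,1]$, because $\Symp(n)$ is open. By the chain rule its derivative is
\begin{align}
	\varphi'(s)=\iprod{\D\Sigma(\gamma(s)).H\,,\.H}.
\end{align}

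Next, apply the fundamental theorem of calculus:
\begin{align}
	\iprod{\Sigma(U)-\Sigma(\widetilde{U})\,,\.U-\widetilde{U}}
	=\varphi(1)-\varphi(0)
	=\int_0^1 \iprod{\D\Sigma(\widetilde{U}+s\.H).H\,,\.H}\,\mathrm{d}s .
\end{align}
By \eqref{eq:DiffMono}, evaluated at the point $\gamma(s)\in\Symp(n)$ in the direction $H$, the integrand is nonnegative for every $s$. Hence the integral is nonnegative, which is exactly \eqref{eq:HilbertMono}.

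No step here is a genuine obstacle. The only points that need care are that the segment stays inside the domain, which follows from convexity of $\Symp(n)$, and that $\varphi$ is $C^1$, which follows from openness of $\Symp(n)$ and the $C^1$ assumption on $\Sigma$; together these justify the fundamental theorem of calculus. One should also note that symmetry of $\D\Sigma(U)$ is not needed, since only the quadratic form $H\mapsto\iprod{\D\Sigma(U).H,H}$ enters.

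As a side remark, the same argument gives the strict version: if the integrand is positive for $H\neq0$, then continuity yields strict two-point monotonicity for $U\neq\widetilde{U}$.
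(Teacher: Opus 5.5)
Your proof is correct and follows essentially the same route as the paper: restrict to the segment $\widetilde{U}+s\,(U-\widetilde{U})$, which stays in $\Symp(n)$ by convexity, apply the fundamental theorem of calculus, and use the nonnegativity of the integrand. The only cosmetic difference is that you apply the fundamental theorem of calculus to the scalar function $\varphi$, whereas the paper applies it to the matrix-valued map $\Sigma$ and only then takes the inner product with $U-\widetilde{U}$.
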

	\begin{proof}
		Fix $U,\widetilde{U}\in\Symp(n)$ and consider the line segment
		\begin{align}
			\gamma(s)\colonequals \widetilde{U}+s\.(U-\widetilde{U})\,,\qquad s\in[0,1]\,.
		\end{align}
		By convexity of $\Symp(n)$, $\gamma(s)\in\Symp(n)$
		for all $s\in[0,1]$.
		By the fundamental theorem of calculus,
		\begin{align}
			\Sigma(U)-\Sigma(\widetilde{U})
			=\int_0^1 \D\Sigma(\gamma(s)).[U-\widetilde{U}]\;\ds\,.
		\end{align}
		Taking the Frobenius inner product with $U-\widetilde{U}$ yields
		\begin{align}
			\iprod{\Sigma(U)-\Sigma(\widetilde{U})\,,\.U-\widetilde{U}}
			=\int_0^1 \iprod{\D\Sigma(\gamma(s)).[U-\widetilde{U}],\.U-\widetilde{U}}\;\ds\,.
		\end{align}
		By assumption \eqref{eq:DiffMono}, the integrand is nonnegative for every $s$,
		hence the integral is nonnegative, which proves \eqref{eq:HilbertMono}.
	\end{proof}
	\begin{lemma}[Two--point monotonicity implies differential monotonicity]
		Assume that $\Sigma$ is $C^1$ and that \eqref{eq:HilbertMono} holds.
		Then \eqref{eq:DiffMono} holds.
	\end{lemma}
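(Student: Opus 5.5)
The plan is to use the openness of $\Symp(n)$ together with a difference-quotient argument along straight rays. This is the converse of the previous lemma, so a limit in place of an integral is the natural tool.

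First, fix $U\in\Symp(n)$ and $H\in\Sym(n)$. If $H=0$ there is nothing to prove, so assume $H\neq 0$. Since $\Symp(n)$ is open in $\Sym(n)$, there is $\varepsilon>0$ such that $U+sH\in\Symp(n)$ for all $s\in(0,\varepsilon)$. Next, apply the two--point inequality \eqref{eq:HilbertMono} with the pair $U+sH$ and $\widetilde U=U$. This gives
\begin{align}
	\iprod{\Sigma(U+sH)-\Sigma(U)\,,\. sH}\geq 0\,,
\end{align}
and dividing by $s^2>0$ yields
\begin{align}
	\iprod{\tfrac{1}{s}\bigl(\Sigma(U+sH)-\Sigma(U)\bigr)\,,\. H}\geq 0 \qquad\forall\; s\in(0,\varepsilon)\,.
\end{align}

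Then let $s\to 0^+$. Since $\Sigma$ is differentiable at $U$ (it is even $C^1$), the difference quotient converges in $\Sym(n)$ to $\D\Sigma(U).H$. The Frobenius inner product with the fixed $H$ is continuous, so the limit of the left-hand side is $\iprod{\D\Sigma(U).H,\.H}$. Because nonnegativity is preserved under limits, we obtain $\iprod{\D\Sigma(U).H,\.H}\geq 0$, which is \eqref{eq:DiffMono}.

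There is no serious obstacle here. The one point needing care is to divide by $s^2$ before passing to the limit, rather than dividing by $s$ only; dividing by $s$ alone would leave a quantity of order $s$ that tends to zero and yields no information. It should also be remarked that the argument does not carry over to strict monotonicity: from the strict inequality \eqref{cc} one only obtains $\geq 0$ in the limit, so strict two--point monotonicity does not imply strict differential monotonicity. The standard example is $t\mapsto t^3$, which is strictly monotone but has vanishing derivative at $t=0$.
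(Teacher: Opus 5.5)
Your proof is correct and is essentially the paper's argument: you apply \eqref{eq:HilbertMono} to $U+sH$ and $U$, which is admissible because $\Symp(n)$ is open, then divide by $s^2$ and let $s\to0$ using differentiability of $\Sigma$. The only differences are cosmetic. You restrict to $s>0$, whereas the paper allows small $s$ of either sign, and your closing remark about the failure of the strict version is a correct aside not made in the paper.
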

	\begin{proof}
		Fix $U\in\Symp(n)$ and $H\in\Sym(n)$.
		For sufficiently small $s\in\R$, the matrix $U+s\.H$ belongs to $\Symp(n)$ since the latter is open.
		Applying \eqref{eq:HilbertMono} to $U+s\.H$ and $U$ gives $\iprod{\Sigma(U+s\.H)-\Sigma(U),s\.H} \geq 0.$ Dividing by $s^2>0$ yields
		\begin{align}
			\iprod{\frac{\Sigma(U+s\.H)-\Sigma(U)}{s},\.H} \geq 0\,.
		\end{align}
		Letting $s\to 0$ and using differentiability of $\Sigma$ gives\footnote{For $n=3$ this is equivalent to $\sym \D\Sigma(U)\in \Sym^{+}_4(6)$} $\iprod{\D\Sigma(U).H,\.H} \geq 0$ which proves \eqref{eq:DiffMono}.
	\end{proof}
	In conclusion, on the open convex set $\Symp(n)$, for continuously differentiable
	tensor fields $\Sigma$, two--point matrix monotonicity is equivalent to
	pointwise positive semidefiniteness of the symmetric part of the derivative.

	\subsection{The monotonicity of the Biot-stress tensor for compressible materials}
	We recall that \cite[page 144]{truesdell65}
	\begin{align}
		\label{difs}
		\TBiot=\D_U\What(U)
	\end{align}
	is the symmetric Biot stress tensor and represents ``the principal forces acting in the reference system".
	\noindent Hence, the following holds true for hyperelasticity {of compressible materials}, assuming sufficient regularity:
	For the matrix--vector equivalences below we use Theorem \ref{theorem:mainResult}; the corresponding differential formulations are then expressed through the principal Biot-stress map.
	\begin{align*}
		U\mapsto \TBiot(U) & \ \ \text{matrix-monotone}          \\\quad &\iff\quad (\lambda_1,\dotsc,\lambda_n)\mapsto \tBiot(\lambda_1,\dotsc,\lambda_n) \ \ \text{vector-monotone}\notag\\\quad &\iff\quad \D\tBiot(\lambda_1,\dotsc,\lambda_n)\in \Sym^{+}(n)\quad\forall\;(\lambda_1,\dotsc,\lambda_n)\in\R_+^n\,,
		\\
		U\mapsto \TBiot(U) & \ \ \text{strictly matrix-monotone} \\\quad &\iff\quad (\lambda_1,\dotsc,\lambda_n)\mapsto \tBiot(\lambda_1,\dotsc,\lambda_n) \ \ \text{strictly vector-monotone,}\notag
	\end{align*}
	Note that the monotonicity conditions are global conditions, while the condition $\D\tBiot\in \Symp(n)$ is only local.
	In the present hyperelastic setting, $\D\TBiot(U)=\D^2_U\What(U)$ is major-symmetric due to the existence of the twice differentiable elastic potential $\What$.
	We also note that for $n=3$
	\begin{align}
		U\mapsto \TBiot(U) \ \ \text{strictly differentially matrix-monotone}\quad & \iff\quad \D \TBiot(U)\in \Sym_4^{++}(6)\,,
	\end{align}
	where $\Sym^{++}_4(6)$ denotes the set of positive definite, major-symmetric fourth-order tensors acting on the six-dimensional space $\Sym(3)$.
	%
	%
	%
	%
	%
	\section{Vector monotonicity and matrix monotonicity for incompressible materials }\setcounter{equation}{0}\label{se3}
	For incompressible models in nonlinear elasticity, the determinant constraint leads to a somewhat ambiguous notion of monotonicity. In particular, as we will discuss here, due to the restriction to a submanifold on $\Symp(n)$, differential monotonicity and two-point monotonicity are no longer equivalent if the extrinsic structure of $\Symp(n)$ is considered.
%
%
%
	\subsection{Extrinsic monotonicity on $\SL(n)\cap \Symp(n)$}
	In the following, we will focus on the constraint $\det U=1$ or, equivalently, $\lambda_1\lambda_2\dotsc\lambda_n=1$ for the corresponding principal stretches $\lambda_i$.
	\begin{definition}[Incompressible stretch sets]\label{def:incompressible-stretch-sets}
		We denote by
		\begin{align}
			\mathcal M \colonequals  \{U\in \Symp(n)\. |\det U=1\}\,,\qquad
			\mathcal{S} \colonequals  \{\lambda\in \R^n_+\.|\.\lambda_1\lambda_2\cdots\lambda_n=1\}
		\end{align}
		the manifold of incompressible right stretch tensors and its spectral counterpart, respectively.
	\end{definition}
	For isotropic incompressible models, one typically works with the right stretch
	\begin{align}
		U(F)=\sqrt{F^T F}\in\Symp(n)\,,
		\qquad \det U=\det F=1\,,
	\end{align}
	and defines a tensor field on $\mathcal M$:
	\begin{align}
		\Sigma^{\rm inc}\col \mathcal M\to\Sym(n)\,,
		\qquad \Sigma^{\rm inc}(U)=Q^T\.\diag(f^{\rm inc}(\lambda))\.Q
		\quad\text{for}\quad U=Q^T\diag(\lambda)\.Q\,,\quad \lambda\in 	\mathcal{S}\,.
	\end{align}
	A \emph{naive two--point} analogue of
	matrix monotonicity would be
	\begin{equation}
		\label{eq:twoPointM}
		\iprod{\Sigma^{\rm inc}(U_1)-\Sigma^{\rm inc}(U_2)\,,\.U_1-U_2} \geq 0
		\qquad \forall\;U_1,U_2\in\mathcal M\,,
	\end{equation}
	which is {algebraically} well-defined because the inner product is taken in the ambient linear space $\Sym(n)$.
	Since $\mathcal S$ is permutation invariant and $\Sym(\mathcal S)=\mathcal M$, the naive two--point condition \eqref{eq:twoPointM} is still equivalent to vector monotonicity on $\mathcal S$ by Theorem \ref{theorem:mainResult}.
	However, there are several reasons why matrix monotonicity on $\mathcal{M}$ and vector monotonicity on $\mathcal{S}$ should not be defined in the same way as on $\Symp(n)$, even though the corresponding relations can be interpreted in the ambient space $\Symp(n)$ through the embedding $\mathcal{M}\subset \Symp(n)$.
	Recall that $\SL(n)=\{F\in\R^{n\times n}\.|\det F=1\}$	is a smooth (nonlinear) submanifold of $\R^{n\times n}$, hence \emph{not} a linear subspace and not convex.
	Its tangent space at $F\in \SL(n)$ is
	\begin{equation}
		\label{eq:TangentSL3}
		T_F \SL(n)=\{H\in\R^{n\times n}\.|\ \tr(F^{-1}H)=0\}\,,
	\end{equation}
	since $\dds\det(F+s\.H)\big|_{s=0}=\det F\.\tr(F^{-1}H)=\iprod{{\rm Cof} F, H}$ and $\det F=1$.
	Similarly, the isochoric stretch manifold $\mathcal M\subset\Sym(n)$ is a smooth submanifold of the \emph{linear} space $\Sym(n)$ with tangent space
	\begin{equation}
		\label{eq:TangentM}
		T_U\mathcal M=\{H\in\Sym(n)\.|\ \tr(U^{-1}H)=0\}\,,
	\end{equation}
	since $\dds\det(U+s\.H)\big|_{s=0}=\det U\.\tr(U^{-1}H)=\iprod{\Cof U,H}$.

	On an \emph{open convex} domain $\Omega\subset \R^n$, if $G\col \Omega\to\R^n$ is $C^1$ and satisfies $\iprod{\D G(x).h,h}\geq 0$ for all $x\in\Omega$ and all $h\in\R^n$, then the standard line segment
	argument yields $\iprod{G(x)-G(y)\,,x-y}\geq 0\ \forall\;x,y\in\Omega.$ This works on $\Symp(n)$, too, because it is an open convex subset of the linear space $\Sym(n)$, and the segment $\gamma(s)=\widetilde{U}+s\.(U-\widetilde{U})$ stays in $\Symp(n)$.
	However, $\mathcal M$ and $\SL(n)$ are \emph{not convex} subsets of their ambient vector spaces. In particular,
	for $U,\widetilde{U}\in\mathcal M$, the segment $\widetilde{U}+s\.(U-\widetilde{U})$ typically satisfies $\det(\widetilde{U}+s\.(U-\widetilde{U}))\neq 1$, so it leaves $\mathcal M$.
	Therefore, the fundamental theorem of calculus along the straight segment cannot be used.
	
	\begin{figure}[h!]
		\centering
		\usetikzlibrary{arrows.meta,calc}
		\begin{tikzpicture}[>=Latex, font=\small, scale=1.05]
			\tikzset{
				pt/.style={circle, fill=black, inner sep=1.4pt},
				seg/.style={line width=1.4pt, blue!70!black},
				chord/.style={line width=1.4pt, dashed, red!75!black},
				mani/.style={line width=1.7pt, green!35!black},
				tang/.style={->, line width=1.2pt, black},
				lbl/.style={font=\scriptsize, align=center}
			}
			\begin{scope}[xshift=0cm]
				\node[font=\bfseries] at (0,2.4)
				{convex set $\mathcal E\subset\R^m$};
				\draw[thick, rounded corners=12pt]
				(-2.6,-1.0) rectangle (2.6,1.25);
				\node at (0,0.9) {$\mathcal E$};
				\coordinate (x) at (-1.75,-0.45);
				\coordinate (y) at (1.55,0.35);
				\node[pt,label=below left:$x$] at (x) {};
				\node[pt,label=above right:$y$] at (y) {};
				\draw[seg] (x)--(y);
				\coordinate (xl) at ($(x)!0.27!(y)$);
				\draw[->, line width=1.2pt, blue!70!black]
				(xl) -- ($(xl)!0.0005cm!(y)$);
				\node[blue!70!black] at (0.0,0.50)
				{$x_s=x+s\.(y-x)$};
				\node[lbl,blue!70!black] at (0.55,-0.35)
				{$\dot x_s=y-x$};
				\node[lbl,blue!70!black] at (0,-0.78)
				{straight segment remains in $\mathcal E$};
			\end{scope}
			\draw[densely dashed,gray] (3.35,-1.7)--(3.35,2.55);
			\begin{scope}[xshift=8.8cm]
				\node[font=\bfseries] at (0,2.4)
				{constraint manifold $\mathcal N$};
				\node at (0,1.95)
				{$\mathcal N\in\{\mathcal S,\mathcal M,\SL(n)\}$};
				\coordinate (p) at (-1.1,0.35);
				\coordinate (q) at (1.75,0.35);
				\draw[mani]
				(-2.3,-0.65)
				.. controls (-1.95,0.45) and (-1.40,0.55) ..
				(p)
				.. controls (-0.80,0.15) and (-0.20,-1.00) ..
				(0.55,-0.35)
				.. controls (1.05,0.35) and (1.35,0.85) ..
				(q)
				.. controls (2.05,0.05) and (2.25,-0.45) ..
				(2.45,-0.75);
				\node[pt,label=above left:$p$] at (p) {};
				\node[pt,label=above right:$q$] at (q) {};
				\draw[chord] (p)--(q);
				\node[red!75!black] at (0.35,0.70)
				{$p+s\.(q-p)$};
				\node[lbl,red!75!black] at (0.35,-0.80)
				{straight chord leaves $\mathcal N$};
				\node[green!35!black] at (0.75,1.12)
				{$\gamma(s)\subset\mathcal N$};
				\coordinate (ptan) at (-0.80,0.15);
				\draw[tang]
				(p) -- ($(p)!0.65cm!(ptan)$);
				\node[lbl] at (0.1,0.00)
				{$H\in T_p\mathcal N$};
			\end{scope}
		\end{tikzpicture}
		\caption{Straight segments on convex sets versus admissible curves on constraint manifolds.}
		\label{fig:convex-vs-manifold}
	\end{figure}
	
	The key difference is that on a nonlinear constraint set, not every tangent direction can be realized by an ambient straight line remaining inside the constraint manifold. Indeed, e.g.\ for $n=3$, let $p=(1,1,1)\in\mathcal S .$
	Since $\mathcal S$ is defined by the constraint $0=q(\lambda_1,\lambda_2,\lambda_3)\colonequals \lambda_1\.\lambda_2\.\lambda_3-1,$ its tangent space at $p$ is $T_p\mathcal S = \{h=(h_1,h_2,h_3)\in\R^n:\D q(p).h=0\}.$ Because $\D q(\lambda_1,\lambda_2,\lambda_3)=(\lambda_2\lambda_3\,,\lambda_1\lambda_3\,,\lambda_1\lambda_2)\,,$ we obtain $\D q(1,1,1)=(1,1,1)\,,$ and therefore $T_p\mathcal S = \{h\in\R^3\.|\.h_1+h_2+h_3=0\}.$ For example, the vector $h=(1,-1,0)\in T_p\mathcal S.$ However, the straight line $p+s\.h=(1+s,\.1-s,\.1)$ does not remain on the manifold, since $(1+s)(1-s)\. 1 = 1-s^2 \neq 1 \quad \text{for } s\neq 0 .$ Thus $p+s\. h\notin\mathcal S$ for $s\neq0$.
	Instead one must follow a curve contained in the manifold, for example $\gamma(s)=\bigl(e^{s},\.e^{-s},\.1\bigr)\,,$ for which $\gamma(s)\in\mathcal S$ since $e^{s}\.e^{-s}\.1=1 .$ Moreover $\gamma(0)=p, \ \gamma'(0)=(1,-1,0)=h .$ Hence the tangent vector $h$ is realized as the velocity of a curve in $\mathcal S$, but not as the direction of a straight chord joining two
	points of the manifold.
	The geometric difference between the two situations is illustrated in Figure~\ref{fig:convex-vs-manifold}.
	
	More precisely, for matrix spaces, fix $U\in\mathcal M$ and $H\in T_U\mathcal M$.
	If one considers the chord $U+s\.H$ in the ambient space $\Sym(n)$, then
	\begin{align}
		\det(U+s\.H)=\det U\.\Big(1+s\tr(U^{-1}H)+O(s^2)\Big)=1+O(s^2)\,,
	\end{align}
	since $\tr(U^{-1}H)=0$ for $H\in T_U\mathcal M$.
	Thus $U+s\.H$ generally leaves $\mathcal M$ at \emph{second order} (unless additional special structure holds).
	Consequently, for a tensor field on $\mathcal M$, $\Sigma^{\rm inc}\col \mathcal M\to\Sym(n)$, a two--point inequality that is only required for pairs of points \emph{on} $\mathcal M$
	does not directly yield the sign of $\iprod{\D\Sigma^{\rm inc}(U).H,H}$ by a difference quotient along $U+s\.H$,
	because $U+s\.H$ is not an admissible comparison point for $s\neq 0$.
	Nevertheless, two--point monotonicity implies the tangential differential condition by differentiating along an admissible curve in $\mathcal M$ with initial velocity $H$.
	In contrast to the case of an open convex subset of $\Sym(n)$, the tangential differential condition is not automatically equivalent (cf.~Fig.~\ref{fig:ambient-vs-tangential} and Example~\ref{example:two_point_counterexample}) to a two--point monotonicity inequality restricted to $\mathcal M$ or to $\SL(n)$. The reason is that arbitrary pairs of points on these manifolds cannot in general be connected by straight segments remaining inside the manifold, and the straight-line difference quotients $U+s\.H$ or $F+s\.H$ are not admissible for $s\neq0$.

	\begin{figure}[h!]
		\centering
		\tikzset{
			monoBox/.style={draw, rounded corners, align=center, inner sep=6pt, font=\small},
			eqv/.style={-Latex, line width=1.9pt},
			impl/.style={-Latex, line width=1.9pt},
			noimpl/.style={-Latex, dashed, line width=1.9pt},
			dbl/.style={<->, Latex-Latex, line width=1.9pt},
			note/.style={font=\scriptsize, align=center}
		}
		\resizebox{0.85\textwidth}{!}{
			\begin{tikzpicture}[>=Stealth, font=\small, node distance=2.8cm]
				\node (naive) [draw, rounded corners, align=center, minimum width=5.0cm]
				{naive two-point on $\mathcal M$ (ambient):\\ $\iprod{\mathcal V (U_1)-\mathcal V (U_2)\,,\.U_1-U_2}\geq0$\\
					\emph{well-defined but not intrinsic}};
				\node (intr) [draw, rounded corners, align=center, right=of naive, minimum width=5.0cm]
				{tangential differential on $\mathcal M$:\\ $\iprod{\D\mathcal V (U).H,\.H}\geq0$\\
					for all $H\in T_U\mathcal M$};
				\draw[->,thick] (naive) -- (intr) node[midway,above] {\tiny };
				\draw[dashed,thick] (intr) -- (naive) node[midway,below] {\tiny no general implication};
				\node (tangent) [below=1.4cm of $(naive)!0.5!(intr)$, draw, rounded corners, align=center, minimum width=10.6cm]
				{reason: $\mathcal M$ is not a linear/convex set. \\ In general $U_1-U_2\notin T_{U_1}\mathcal M$ and $U_2+s\.(U_1-U_2)\notin \mathcal M$.};
				\draw[red,very thick] ($(naive.east)!0.5!(intr.west)+(-0.15,0.20)$) -- ($(naive.east)!0.5!(intr.west)+(0.15,-0.20)$);
				\draw[red,very thick] ($(naive.east)!0.5!(intr.west)+(-0.15,-0.20)$) -- ($(naive.east)!0.5!(intr.west)+(0.15,0.20)$);
		\end{tikzpicture}}
		\caption{Ambient two-point monotonicity versus tangential differential monotonicity on $\mathcal M$.}
		\label{fig:ambient-vs-tangential}
	\end{figure}
	The relation between
	two--point and differential monotonicity of isotropic tensor fields $\Sigma^{\rm inc}\col \mathcal M\to\Sym(n)$ is discussed further in the next subsections, together with its spectral characterisation.
	%
	%
	\subsection{Intrinsic monotonicity of the Biot principal stresses in spectral variables}
	In order to demonstrate that the above issues are addressed by the notion of monotonicity in terms of logarithmic strain (cf.~eq.~\eqref{logW} and Appendix~\ref{eiA}), we will first consider the Biot stress as a function of the principal stretches. More specifically, we interpret $\lambda_i>0$ as the singular values of $U\in \Symp(n)\cap \SL(n)$. The set $\mathcal S$ is a smooth $(n-1)$-dimensional submanifold of $\R^n_+$ but is
	not convex. Let $f^{\rm inc} \col \mathcal{S} \to \R^n$
	be a vector field $f^{\rm inc} (\lambda) = \bigl(f_1^{\rm inc} (\lambda),f_2^{\rm inc} (\lambda),\dotsc,f_n^{\rm inc} (\lambda)\bigr).$ Recall that the tangent space at $p\in \mathcal{S}$ is $T_p\mathcal S = \{h=(h_1,\dotsc,h_n)\in\R^n\.|\D s(p). h=0\}\,,$ with $s(\lambda)=\lambda_1\lambda_2\dotsc\lambda_n-1$. Its gradient is $\D s(\lambda) = (\lambda_2\lambda_3\dotsc\lambda_n, \lambda_1\lambda_3\dotsc\lambda_n,\dotsc.,\lambda_1\lambda_2\dotsc\lambda_{n-1}).$ Hence, for $h=(h_1,\dotsc,h_n)\in\R^n$ we obtain
	\begin{align}
		\D s(p). h = \lambda_2\lambda_3\dotsc\lambda_n\.h_1 + \lambda_1\lambda_3\dotsc\lambda_n\.h_2 +\dotsc+ \lambda_1\lambda_2\dotsc\lambda_{n-1}\.h_n\,.
	\end{align}
	Since $p\in\mathcal S$, we have $\lambda_1\lambda_2\dotsc\lambda_n=1.$ Dividing the previous relation by $\lambda_1\lambda_2\dotsc\lambda_n$ yields 
	\begin{equation}
		\frac{h_1}{\lambda_1} + \frac{h_2}{\lambda_2}+\dotsc + \frac{h_n}{\lambda_n} =0\,.
	\end{equation}
	Hence,
	\begin{align}
		T_p\mathcal S=
		\{h=(h_1,\dotsc,h_n)\in\R^n\.|\.	\sum_{i=1}^n \frac{h_i}{\lambda_i}=0\}\,.
	\end{align}
		We note again that classical tangential differential monotonicity does not imply two-point monotonicity on the manifold $\mathcal{S}$.
		\begin{example}
			\label{example:two_point_counterexample}
			For $n=2$ and thus $\mathcal S\subset\R^2$, we consider the restriction to $\mathcal S$ of the permutation equivariant function
			\begin{equation}
				f\col\R_+^2\to\R^2\,,
				\qquad f(\lambda_1,\lambda_2)=\matr{\log\lambda_1+\lambda_2\\\log\lambda_2+\lambda_1},
				\qquad
				\D f(\lambda)=\matr{\frac{1}{\lambda_1} & 1\\1 & \frac{1}{\lambda_2}}.
			\end{equation}
			For $p=(\lambda_1,\lambda_2)\in\mathcal{S}$, every tangent vector $h\in T_p\mathcal{S}$ is of the form $h=s\.(\lambda_1,-\lambda_2)$ for some $s\in\R$.
			Thus,
			\begin{align}
				\iprod{Df(p).h,h}&=\iprod{s\matr{\frac{1}{\lambda_1} & 1\\1 & \frac{1}{\lambda_2}}\matr{\lambda_1\\-\lambda_2},\,s\matr{\lambda_1\\-\lambda_2}}
				=s^2\iprod{\matr{1-\lambda_2\\\lambda_1-1},\matr{\lambda_1\\-\lambda_2}}\notag\\
				&=s^2(\lambda_1+\lambda_2-2\.\lambda_1\lambda_2)
				=s^2(\lambda_1+\lambda_2-2)
				\geq 0\,,
			\end{align}
			because $\lambda_1\lambda_2=1$ and $\lambda_1+\lambda_2\geq2$ for all $p=(\lambda_1,\lambda_2)\in\mathcal{S}$. 
			Therefore, $f$ is differentially vector-monotone on $\mathcal{S}$.
			However, $f$ is not two-point vector-monotone on $\mathcal{S}$.
			Indeed, for any $r>0$, we choose $p_1=(e^r,e^{-r})$ and $p_2=(e^{-r},e^r)$ and compute
			\begin{align}
				p_1-p_2&=\matr{e^r-e^{-r}\\e^{-r}-e^r}
				=2\sinh r\matr{1\\-1},\notag\\
				f(p_1)-f(p_2)&=\matr{r+e^{-r}\\-r+e^r}-\matr{-r+e^r\\r+e^{-r}}
				=\matr{2\.r+e^{-r}-e^r\\-2\.r+e^r-e^{-r}}
				=2\.(r-\sinh r)\matr{1\\-1}.
			\end{align}
			Therefore,
			\begin{equation}
				\iprod{f(p_1)-f(p_2),p_1-p_2}=\iprod{2\sinh r\matr{1\\-1},\,2\.(r-\sinh r)\matr{1\\-1}}
				=8\sinh r\.(r-\sinh r)<0\,,
			\end{equation}
			because $\sinh r>r$ for every $r>0$.
			Consequently, tangential differential vector monotonicity on the non-convex manifold $\mathcal{S}$ does not imply two-point vector monotonicity.
		\end{example}
	In order to introduce a more suitable notion of monotonicity on $\mathcal{S}$, we consider the logarithmic mapping
	\begin{align}
		\Phi\col \mathcal S\to \Pi &\colonequals \{u\in\R^n \.|\. u_1+u_2+\dotsc+u_n=0\}\,, \qquad
		{\Phi(\lambda)=(\log\lambda_1,\dotsc,\log\lambda_n)}\,.
	\end{align}
	Since $\log(\lambda_1\lambda_2\dotsc\lambda_n)=0$, we have indeed $\Phi(\lambda)\in\Pi$. Moreover, $\Phi$ is a global diffeomorphism with inverse $u\mapsto (\mathrm e^{u_1},\mathrm e^{u_2},\dotsc,\mathrm e^{u_n})$.
	For $\lambda\in\mathcal S$ and $h\in T_\lambda\mathcal S$, we compute
	\begin{align}
		\D\Phi(\lambda).h = \left(\frac{h_1}{\lambda_1},\frac{h_2}{\lambda_2},\dotsc,\frac{h_n}{\lambda_n}\right)\in \Pi\,.
	\end{align}
	Since $\Pi\subset\R^n$ is a linear vector space, the notion of monotonicity is clearly and unambiguously defined for any function $\ftilde\col\Pi\to\R^n$. Since $\Phi$ is a diffeomorphism, we can also equivalently express any function $f\col\mathcal S\to\R^n$ as $\ftilde=f\circ\Phi\inv\col\Pi\to\R^n$. In the following, we will consider the monotonicity of this function $\ftilde$ on $\Pi$ the \emph{intrinsic} monotonicity of $f$ on $\mathcal S$. More explicitly, we will use the notation
	\begin{align}
		\widetilde f^{\rm inc}(u)
		\colonequals  f^{\rm inc}(e^u)\,,
		\quad u\in\Pi \qquad \iff\qquad \widetilde f^{\rm inc}(\log \lambda)
		=  f^{\rm inc}(\lambda)\,,
		\quad \lambda\in\mathcal S\,.
	\end{align}
	Throughout the following, the term \emph{intrinsic} refers specifically to the structure induced by the logarithmic parametrization, equivalently by the chosen log--Euclidean metric.
	\begin{definition}[Intrinsic vector monotonicity]
	\label{eq:intrinsic_vector_monotonicity}
		The isotropic vector field $f^{\rm inc} \col \mathcal S\to\R^n$ is called \emph{intrinsically vector-monotone} if
		\begin{align}
			\label{im}
			\sum_{i=1}^n
			\bigl(f_i^{\rm inc} (\lambda)-f_i^{\rm inc} (\mu)\bigr)
			\bigl(\log\lambda_i-\log\mu_i\bigr)
			\geq0
			\qquad
			\forall\;\lambda,\mu\in\mathcal S\,.
		\end{align}
	\end{definition}
	Since $\log\lambda-\log\mu\in\Pi$, the monotonicity condition depends only on the projection of the stress difference $f^{\rm inc}(\lambda)-f^{\rm inc}(\mu)$ onto $\Pi$.
	\begin{definition}[Differential intrinsic monotonicity] Assume $f^{\rm inc}$ is continuously differentiable on $\mathcal{S}$. The vector field
		$f^{\rm inc}$ is called \emph{differentially intrinsically monotone} if
		\begin{equation}
			\label{eq:intrinsic-biot}
			\iprod{\D_u\widetilde f^{\rm inc}(u).k,k}_{\R^n} \geq 0
			\quad
			\forall\;u\in\Pi\,\ \forall \;k\in\Pi \quad \iff \quad \iprod{\D_{\log \lambda}\widetilde f^{\rm inc}(\log \lambda).\.k,k}_{\R^n} \geq 0	\quad
			\forall\;\lambda\in\mathcal S\,,\ \forall\;k\in\Pi\,,
		\end{equation}
		where $\Pi=\{u\in\R^n \.|\. u_1+\dotsc+u_n=0\}.$
	\end{definition}
	If the inequality in \eqref{eq:intrinsic-biot} is strict for every $k\in\Pi\setminus\{0\}$, we call $f^{\rm inc}$ strictly differentially intrinsically monotone.
	The above discussion concerning differentials on the manifold $\mathcal S$ becomes considerably simpler after passing to logarithmic coordinates. Indeed, the global diffeomorphism $\Phi\col \mathcal S\to \Pi, \ \Phi(\lambda)=\log\lambda,$ identifies $\mathcal S$ with the linear subspace $\Pi.$ Consequently, after introducing $\widetilde f^{\rm inc}(u) = f^{\rm inc}(e^u)\,, \ u\in\Pi,$ all differentiability properties are understood in the usual Fr\'echet sense on the linear space $\Pi$. In particular, $\D_u\widetilde f^{\rm inc}(u)\col \Pi\to \R^n$ is an ordinary linear operator, so that no distinction between intrinsic and extrinsic differentials is required. The manifold structure of $\mathcal S$ has been completely absorbed into the logarithmic parameterization.
	
	In terms of the differential of $f^{\rm inc}$ on $\mathcal S$, the chain rule yields
		\begin{align}
			\D_u\widetilde f^{\rm inc}(u).k
			=
			\D f^{\rm inc}(\lambda).\bigl[\diag(\lambda)\.k\bigr],
			\qquad
			\lambda=e^u,
		\end{align}
		where $\diag(\lambda)\.k\in T_\lambda\mathcal S$ for every $k\in\Pi$.
		Hence, condition \eqref{eq:intrinsic-biot} can equivalently be written as
		\begin{align}
			\iprod{\D f^{\rm inc}(\lambda).\bigl[\diag(\lambda)\.k\bigr],k}
			\geq0
			\qquad
			\forall\;\lambda\in\mathcal S\,,\quad
			\forall\;k\in\Pi\,.
	\end{align}
	The resulting local--global equivalence is summarized in Figure~\ref{fig:intrinsic-local-global}.
	
	\begin{figure}[h!]
		\centering
		\tikzset{
			monoBox/.style={draw, rounded corners, align=center, inner sep=6pt, font=\small},
			eqv/.style={-Latex, line width=1.9pt},
			impl/.style={-Latex, line width=1.9pt},
			noimpl/.style={-Latex, dashed, line width=1.9pt},
			dbl/.style={<->, Latex-Latex, line width=1.9pt},
			note/.style={font=\scriptsize, align=center}
		}
		\begin{tikzpicture}[>=Stealth, font=\small, node distance=2.4cm]
				\node (PiG) [draw, rounded corners, align=center, minimum width=4.0cm]
				{on $\Pi$ (convex):\\ global monotonicity\\ $\iprod{\widetilde f^{\rm inc}(u)-\widetilde f^{\rm inc}(v)\,,u-v}\geq0$};
				\node (PiL) [draw, rounded corners, align=center, right=of PiG, minimum width=4.0cm]
				{on $\Pi$ (convex):\\ local (differential) monotonicity\\ $\iprod{\D\widetilde f^{\rm inc}(u).k,k}\geq0$};
				\draw[<->,thick,double] (PiG) -- node[above] {\bf equivalence} (PiL);
				\node (bridge) [below=1.5cm of $(PiG)!0.5!(PiL)$, draw, rounded corners, align=center, minimum width=8.5cm]
				{log-identification: $\widetilde f^{\rm inc}(u)=f^{\rm inc}(e^u)$,\quad $u=\log\lambda\in\Pi$\\
					intrinsic condition on $\mathcal S$ becomes Euclidean condition on $\Pi$.};
				\draw[->,thick] (bridge.north) -- (PiG.south);
				\draw[->,thick] (bridge.north) -- (PiL.south);
				\node (Slog) [below=1cm of bridge, draw, rounded corners, align=center, minimum width=4.0cm]
				{on $\mathcal S$ with log-metric:\\
					global: $\sum (f_i(\lambda)-f_i(\mu))(\log\lambda_i-\log\mu_i)\geq0$\\
					local$\iff$global via $\log$};
				\node (Schord) [below=1cm of Slog, draw, rounded corners, align=center, minimum width=4.0cm]
				{on $\mathcal S$ with Euclidean chords (nonconvex):\\
					naive two-point uses $(\lambda-\mu)$\\
					\textbf{no} general local$\iff$global};
				\draw[->,thick] (Slog.north) -- node[right] {\scriptsize } (bridge.south);
				\draw[->,double] (Schord) -- (Slog)
				node[midway,right] {\scriptsize non-equivalent notions} node[midway]{\textcolor{red}{\large\textbf{\textsf{X}}}};
		\end{tikzpicture}
		\caption{Local and global monotonicity in logarithmic coordinates.}
		\label{fig:intrinsic-local-global}
	\end{figure}
	\begin{theorem}[Local--global equivalence of intrinsic monotonicity]\label{thm:biot-equivalence}
		Let $f^{\rm inc}\in C^1(\mathcal{S})$ be an isotropic vector field.
		Then the following statements are equivalent:
		\begin{enumerate}
			\item $f^{\rm inc}$ is intrinsically monotone in the sense of
			\eqref{im};
			\item $f^{\rm inc}$ is differentially intrinsically monotone in the sense of
			\eqref{eq:intrinsic-biot}
		\end{enumerate}
	\end{theorem}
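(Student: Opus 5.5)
The plan is to transport everything to the linear space $\Pi$ via the global diffeomorphism $\Phi$. There we can run the standard line-segment argument from the lemmas on two-point versus differential monotonicity on $\Symp(n)$, now on the open convex set $\Pi$ (a linear subspace, hence trivially convex and relatively open in itself).

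\textbf{Reformulation.} Set $\widetilde f^{\rm inc}=f^{\rm inc}\circ\Phi^{-1}$, so that $\widetilde f^{\rm inc}\in C^1(\Pi;\R^n)$, since $\Phi^{-1}(u)=e^u$ is smooth. For $\lambda,\mu\in\mathcal S$ write $u=\log\lambda$ and $v=\log\mu$, both in $\Pi$. Then \eqref{im} reads
\begin{align*}
\iprod{\widetilde f^{\rm inc}(u)-\widetilde f^{\rm inc}(v)\,,u-v}_{\R^n}\geq0\qquad\forall\;u,v\in\Pi\,.
\end{align*}
Because $\Phi$ is bijective onto $\Pi$, this is exactly equivalent to statement 1. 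Statement 2 is by definition \eqref{eq:intrinsic-biot} the condition $\iprod{\D_u\widetilde f^{\rm inc}(u).k,k}\geq0$ for all $u,k\in\Pi$. The theorem is therefore the Euclidean local--global equivalence on $\Pi$.

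\textbf{(2) $\Rightarrow$ (1).} Fix $u,v\in\Pi$ and put $\gamma(s)=v+s(u-v)$. This curve lies in $\Pi$ for $s\in[0,1]$ by linearity, and its velocity $u-v$ also lies in $\Pi$. The fundamental theorem of calculus gives
\begin{align*}
\widetilde f^{\rm inc}(u)-\widetilde f^{\rm inc}(v)=\int_0^1\D\widetilde f^{\rm inc}(\gamma(s)).(u-v)\,\ds\,.
\end{align*}
Pairing with $u-v$ makes the integrand $\iprod{\D\widetilde f^{\rm inc}(\gamma(s)).(u-v),u-v}$, which is nonnegative by (2) applied with $k=u-v\in\Pi$. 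Hence (1) holds.

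\textbf{(1) $\Rightarrow$ (2).} Fix $u,k\in\Pi$ and apply (1) to $u+sk\in\Pi$ and $u$. This gives $s\iprod{\widetilde f^{\rm inc}(u+sk)-\widetilde f^{\rm inc}(u),k}\geq0$. Divide by $s^2$ and let $s\to0$; differentiability yields $\iprod{\D_u\widetilde f^{\rm inc}(u).k,k}\geq0$.

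The only point needing care, and the expected obstacle, is the meaning of $f^{\rm inc}\in C^1(\mathcal S)$ and the compatibility of derivatives. I would note that $C^1$ on the submanifold $\mathcal S$ means exactly that $f^{\rm inc}\circ\Phi^{-1}$ is $C^1$ on $\Pi$, since $\Phi$ is a smooth chart. The equivalent chain-rule form $\D f^{\rm inc}(\lambda).[\diag(\lambda)k]$ stated before the theorem then involves only tangential derivatives, because $\diag(\lambda)k\in T_\lambda\mathcal S$. Isotropy is not needed for the argument; it only ensures consistency with the matrix setting. If one wanted the strict version, the same segment argument applies with strict positivity for $u\neq v$.
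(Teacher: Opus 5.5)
Your proposal is correct and takes the same route as the paper. Both pass to logarithmic coordinates, rewrite \eqref{im} as Euclidean monotonicity of $\widetilde f^{\rm inc}$ on the linear space $\Pi$, and use the local--global characterization of monotone maps on a convex set. The only difference is that you write out the segment/fundamental-theorem and difference-quotient arguments explicitly, whereas the paper just cites this standard characterization, which it had proved earlier for the open convex set $\Symp(n)$.
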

	\begin{proof}
		In logarithmic coordinates the constraint set $\mathcal S$ is identified with the
		linear subspace $\Pi\subset\R^n$.
		The intrinsic monotonicity condition \eqref{im} becomes
		\begin{align}
			\iprod{\widetilde f^{\rm inc}(u)-\widetilde f^{\rm inc}(v)\,,\.u-v}_{\R^n} \geq 0
			\qquad \forall\;u,v\in\Pi\,.
		\end{align}
		Since $\Pi$ is convex, the standard characterization of monotone operators
		on convex sets applies.
		In particular, the above inequality holds for all $u,v\in\Pi$ if and only if
		\begin{align}
			\hspace{5.5cm} \iprod{\D\widetilde f^{\rm inc}(u).\.k,k}_{\R^n} \geq 0
			\qquad \forall\;u\in\Pi\,,\quad \forall\;k\in\Pi\,.\hspace{6cm} \qedhere
		\end{align}
	\end{proof}
	Moreover, strict positivity in \eqref{eq:intrinsic-biot} for every $k\neq0$ implies strict intrinsic vector monotonicity by integration along line segments in $\Pi$. The converse is not asserted.
	For monotonicity notions based on Euclidean chords $(\lambda-\mu)$, the standard straight-line argument does not provide an analogous local--global equivalence because $\mathcal S$ is not convex.
	
	The identification of $\mathcal S$ with $\Pi$ can also be interpreted in a more direct geometrical way, which further motivates the definition of intrinsic monotonicity via the logarithmic mapping. More specifically,
	we equip $\mathcal S$ with the chosen log--Euclidean metric, defined as the pull-back of the Euclidean metric on $\Pi$ by $\Phi$, namely
	\begin{align}
		\label{def_metric}
		\iprod{h,k}_\mathcal{S} \colonequals \iprod{\D\Phi(\lambda).h,\D\Phi(\lambda).k}_{\R^n}
		= \sum_{i=1}^n \frac{h_i\.k_i}{\lambda_i^2}
		\qquad\forall\; h,k\in T_\lambda\mathcal S\,.
	\end{align}
	By construction, $\Phi$ defines an isometry between $\mathcal S$ with this metric and $\Pi$ with the Euclidean metric, i.e.
	
	$\Phi$ preserves lengths and angles, and hence also distances.
	In particular, $\Phi$ maps minimizing geodesics in $\mathcal S$ to straight segments in $\Pi$ and vice versa (see e.g.\ \cite{DoCarmo}).
	By definition of the pull-back metric, for any smooth curve $\gamma$ in $\mathcal S$ we have
	\begin{align}
		\norm{\gamma'(s)}_{\mathcal S}
		=
		\norm{\D\Phi(\gamma(s)).\.\gamma'(s)}
		=
		\norm{\dds(\Phi\circ\gamma)(s)}.
	\end{align}
	Hence,
	\begin{align}
		\mathrm{Length}_{\mathcal S}(\gamma)
		=
		\mathrm{Length}_{\Pi}(\Phi\circ\gamma)\,.
	\end{align}
	Therefore, a curve $\gamma$ in $\mathcal S$ minimizes the Riemannian distance between $\lambda$ and $\mu$ if and only if $\Phi\circ\gamma$ minimizes the Euclidean distance between $u=\log\lambda$ and $v=\log\mu$ in $\Pi$.
	Since $\Pi$ is a Euclidean space, the unique straight segment joining $u=\Phi(\lambda)=\log\lambda$ and $v=\Phi(\mu)=\log\mu$ is the straight segment
	\begin{align}
		\widetilde\gamma(s)=(1-s)\.u+s\.v, \qquad s\in[0,1]\,.
	\end{align}
	Because $\Phi\col (\mathcal S,g)\to(\Pi,\iprod{\.,\.})$ is an isometry, it follows that the unique affinely parametrized minimizing geodesic in $\mathcal S$ joining $\lambda$ and $\mu$ is obtained by pulling back the segment $\widetilde\gamma$ via $\Phi^{-1}$. 
	Therefore,
	\begin{align}
		\gamma(s)
		=
		\Phi^{-1}(\widetilde\gamma(s))
		=
		\exp\bigl((1-s)\log\lambda+s\log\mu\bigr)\,,
		\qquad s\in[0,1]\,.
	\end{align}
	In particular, there exists a unique affinely parametrized minimizing geodesic in $\mathcal S$
	connecting $\lambda$ and $\mu$.
	Moreover, since $\Phi$ is an isometry, the induced Riemannian distance on $\mathcal S$ coincides with the Euclidean distance in $\Pi$, see \cite{DoCarmo}, i.e.
	\begin{align}
		d(\lambda,\mu)
		=
		\|\log\lambda-\log\mu\|_{\R^n}.
	\end{align}
	The curve $\gamma$ realizes this minimal length. 
	In particular, the two-point monotonicity according to Definition~\ref{eq:intrinsic_vector_monotonicity} is naturally associated with the log--Euclidean geodesic displacement $\log\lambda-\log\mu$, whereas straight line segments (which would correspond to the extrinsic notion of monotonicity) are generally not contained in $\mathcal S$.
	The geometry of the logarithmic identification is summarized in Figure~\ref{fig:log-identification}.
	
	\begin{figure}[h!]
		\centering
		\tikzset{
			monoBox/.style={draw, rounded corners, align=center, inner sep=6pt, font=\small},
			eqv/.style={-Latex, line width=1.9pt},
			impl/.style={-Latex, line width=1.9pt},
			noimpl/.style={-Latex, dashed, line width=1.9pt},
			dbl/.style={<->, Latex-Latex, line width=1.9pt},
			note/.style={font=\scriptsize, align=center}
		}
			\begin{tikzpicture}[>=Stealth, node distance=2.5cm, font=\small]
				\node (S) [draw, rounded corners, align=center, text width=6cm]
				{$\mathcal S=\{\lambda\in\R_+^n\.|\.\lambda_1\lambda_2\dotsc\lambda_n=1\}$\par (nonconvex)};
				\node (Pi) [draw, rounded corners, align=center, text width=6.0cm, right=of S]
				{$\Pi=\{u\in\R^n\.|\. u_1+u_2+\dotsc+u_n=0\}$\par (linear, convex)};
				\draw[<->,thick] (S) -- node[above] {$\Phi(\lambda)=\log\lambda$} (Pi);
				\path[] (Pi) -- node[below] {$\Phi^{-1}(u)=\exp u$} node[midway](center){} (S);
				\node (metric) [below=1.75cm of center, anchor=north, draw, rounded corners, align=center]
				{metric on $\mathcal S$ defined by pull-back:\\ $\iprod{h,k}_\mathcal{S} \colonequals \iprod{\D\Phi(\lambda).h,\D\Phi(\lambda).\.k}$\\ $\Rightarrow \Phi$ is an isometry, and $(\mathcal S,\text{log-metric})\cong (\Pi,\text{Euclidean})$.};
				\draw[->,thick] (S) -- (metric);
				\draw[->,thick] (Pi) -- (metric);
		\end{tikzpicture}
		\caption{Logarithmic identification of $\mathcal S$ with the linear space $\Pi$.}
		\label{fig:log-identification}
	\end{figure}
	
	The intrinsic monotonicity of $f^{\rm inc}$ is the natural spectral counterpart of matrix monotonicity for isotropic tensor fields on $\mathcal M$, which we will consider next.
	\subsection{Intrinsic matrix monotonicity}
	Recall that $\mathcal M$ is the manifold of isochoric right stretch tensors. This manifold is naturally
	associated with $\SL(n)$: for $F\in \SL(n)$ with polar decomposition $F=R\.U$ (see \cite{agn_neff2014grioli})\,, one has $U\in\mathcal M$.
	Introduce the matrix logarithm
	\begin{align}
		X \colonequals  \log U\,.
	\end{align}
	Since $\det U=1$, it follows that $\tr(\log U)=\log(\det U)=0,$ hence
	\begin{align}
		X \in \dev\Sym(n)
		\colonequals  \{ X\in \Sym(n) \.|\tr(X)=0 \}\,.
	\end{align}
	The map
	\begin{align}
		\Phi\col \mathcal M \to \dev\Sym(n)\,,
		\qquad
		\Phi(U)=\log U
	\end{align}
	is a global diffeomorphism with inverse $X\mapsto \exp X$. Thus, $\mathcal M$
	can be identified with the linear space $\dev\Sym(n)$ via the logarithmic map.
	\medskip\par
	We equip $\mathcal M$ with the chosen \emph{log--Euclidean metric}, i.e.\ the pull-back of the Frobenius inner product $\iprod{A,B} \colonequals \tr(A^T B)=\tr(AB)$ from $\dev\Sym(n)$ via $\Phi$. In particular, for $U\in\mathcal M$ and $H,K\in T_U\mathcal M$, we set
	\begin{align}
		\iprod{H,K}_U
		\colonequals
		\iprod{\D\Phi(U).\.H,\; \D\Phi(U).\.K}\,.
	\end{align}
	The differential $\D\Phi(U)$ defines an isometric isomorphism
	\begin{align}
		\D\Phi(U)\col T_U\mathcal M \to \dev\Sym(n)\,.
	\end{align}
	with respect to this metric, thus geodesics are given by
		\begin{align}
			\gamma(s)=\exp\bigl((1-s)\.X + s\.Y\bigr),
			\qquad X=\log U,\quad Y=\log \widetilde{U}\,,
		\end{align}
	and the Riemannian distance reads
		\begin{align}
			d(U,\widetilde{U})=\norm{\log U-\log \widetilde{U}}\,.
		\end{align}
	We will therefore consider a definition analogous to $\mathcal S$ for the intrinsic monotonicity on $\mathcal M$.
	Let
	\begin{align}
		\Sigma^{\rm inc}\col \mathcal M \to \Sym(n)
	\end{align}
	be a (for instance isotropic) tensor field.
	\begin{definition}[Intrinsic log-monotonicity]
		The field $\Sigma^{\rm inc}$ is called intrinsically monotone on $\mathcal M$ if
		\begin{equation}
			\label{eq:matrix_intrinsic_global}
			\iprod{\Sigma^{\rm inc}(U)-\Sigma^{\rm inc}(\widetilde{U})\,,\ \log U - \log \widetilde{U}}
			\geq 0
			\qquad \forall\;U,\widetilde{U}\in\mathcal M\,.
		\end{equation}
	\end{definition}
	Define the log-reparametrized mapping
	\begin{align}
		\widetilde\Sigma^{\rm inc}\col
		\dev\Sym(n)\to \Sym(n)\,,
		\qquad
		\widetilde\Sigma^{\rm inc}(X) \colonequals  \Sigma^{\rm inc}(\exp X)\,.
	\end{align}
	\begin{definition}[Differential intrinsic monotonicity]
		$\Sigma^{\rm inc}$ is called differentially intrinsically monotone if
		\begin{equation}
			\label{eq:matrix_intrinsic_local}
			\iprod{\D\widetilde\Sigma^{\rm inc}(X).H,\.H}
			\geq 0
			\qquad
			\forall\;X\in \dev\Sym(n)\,,
			\quad H\in \dev\Sym(n)\,.
		\end{equation}
	\end{definition}
	If the inequality in \eqref{eq:matrix_intrinsic_local} is strict for every $H\in\dev\Sym(n)\setminus\{0\}$, we call $\Sigma^{\rm inc}$ strictly differentially intrinsically monotone.
	The logarithmic parametrization removes the geometric difficulties associated with differentiating tensor fields on the nonlinear manifold $\mathcal M$.
	Indeed, $\log\col\mathcal M \to \dev\Sym(n)$ is a global diffeomorphism, and $\dev\Sym(n)$ is a linear vector space.
	Consequently, after introducing the log-reparametrized mapping $\widetilde\Sigma^{\rm inc}(X) \colonequals \Sigma^{\rm inc}(\exp X)\,,$ all derivatives are understood in the ordinary Fr\'echet sense on the linear space $ \dev\Sym(n)$.
	In particular, $\D\widetilde\Sigma^{\rm inc}(X)\col \dev\Sym(n) \to \Sym(n)$ is an ordinary linear operator, and no distinction between intrinsic and extrinsic differentials is required. Therefore, the differential intrinsic monotonicity condition is naturally formulated in logarithmic coordinates.
	\begin{theorem}[Intrinsic equivalence]\label{thm:matrix_equivalence}
		Let $\Sigma^{\rm inc}\in C^1(\mathcal M)$.
		Then the following are equivalent:
		\begin{enumerate}
			\item $\Sigma^{\rm inc}$ is intrinsically monotone in the sense of
			\eqref{eq:matrix_intrinsic_global},
			\item $\Sigma^{\rm inc}$ is differentially intrinsically monotone in the sense of
			\eqref{eq:matrix_intrinsic_local}.
		\end{enumerate}
	\end{theorem}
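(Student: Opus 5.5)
The plan is to transfer the whole question to the linear space $\dev\Sym(n)$ through the global diffeomorphism $\Phi(U)=\log U$, exactly as in the proof of Theorem~\ref{thm:biot-equivalence}. Writing $X=\log U$ and $Y=\log\widetilde U$, condition \eqref{eq:matrix_intrinsic_global} becomes
\begin{align}
	\iprod{\widetilde\Sigma^{\rm inc}(X)-\widetilde\Sigma^{\rm inc}(Y)\,,\ X-Y}\geq 0\qquad\forall\;X,Y\in\dev\Sym(n)\,,
\end{align}
because $\Phi$ is bijective onto $\dev\Sym(n)$. The map $\widetilde\Sigma^{\rm inc}=\Sigma^{\rm inc}\circ\exp$ is $C^1$ on $\dev\Sym(n)$: the restriction of $\exp$ is smooth, and $\Sigma^{\rm inc}\in C^1(\mathcal M)$. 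One subtlety is that $\widetilde\Sigma^{\rm inc}$ takes values in $\Sym(n)$, not in $\dev\Sym(n)$. This causes no difficulty, since it is only ever paired with elements $X-Y$ or $H$ of $\dev\Sym(n)$. Equivalently, one may replace $\widetilde\Sigma^{\rm inc}$ by its deviatoric projection $\dev\,\widetilde\Sigma^{\rm inc}$, which changes neither of the two conditions.

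For (2)$\Rightarrow$(1), fix $X,Y\in\dev\Sym(n)$ and use the segment $\gamma(s)=Y+s\.(X-Y)$. It stays in $\dev\Sym(n)$ because that set is a linear space, so here the non-convexity of $\mathcal M$ plays no role. By the fundamental theorem of calculus,
\begin{align}
	\iprod{\widetilde\Sigma^{\rm inc}(X)-\widetilde\Sigma^{\rm inc}(Y),X-Y}=\int_0^1\iprod{\D\widetilde\Sigma^{\rm inc}(\gamma(s)).(X-Y),X-Y}\,\ds\,.
\end{align}
The integrand is nonnegative by \eqref{eq:matrix_intrinsic_local}, since $X-Y\in\dev\Sym(n)$. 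This is the same argument as in the lemma on differential versus two-point monotonicity on $\Symp(n)$.

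For (1)$\Rightarrow$(2), fix $X,H\in\dev\Sym(n)$ and apply the two-point inequality to the pair $X+sH$ and $X$, both of which lie in $\dev\Sym(n)$. Dividing by $s^2$ and letting $s\to0$ gives $\iprod{\D\widetilde\Sigma^{\rm inc}(X).H,H}\geq0$, by Fréchet differentiability of $\widetilde\Sigma^{\rm inc}$. Translated back to $\mathcal M$, this is differentiation along the geodesic $s\mapsto\exp(X+sH)$ rather than along the inadmissible chord $U+sH$.

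The only point requiring care is the regularity step: what $\Sigma^{\rm inc}\in C^1(\mathcal M)$ means, and why its composition with $\exp|_{\dev\Sym(n)}$ is $C^1$ in the Fréchet sense on the linear space. I would argue this via the chain rule. The map $\exp\col\dev\Sym(n)\to\mathcal M$ is smooth, since $\det\exp X=e^{\tr X}=1$, and its differential maps into $T_{\exp X}\mathcal M$. Composing with a $C^1$ map on the submanifold $\mathcal M$ therefore gives a $C^1$ map. Everything else is a verbatim copy of the convex-domain argument.
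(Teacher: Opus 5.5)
Your proposal is correct and follows the paper's proof. The paper likewise transfers the two-point condition to the linear space $\dev\Sym(n)$ via $U=\exp X$ and invokes the classical characterization of monotone operators on convex sets; you spell out both directions of that characterization (the segment/fundamental-theorem-of-calculus argument and the difference-quotient argument, as in the paper's lemmas on $\Symp(n)$) and the $C^1$ regularity of $\Sigma^{\rm inc}\circ\exp$, which the paper leaves implicit.
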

	\begin{proof}
		Via the diffeomorphism $U=\exp X$, condition
		\eqref{eq:matrix_intrinsic_global} becomes
		\begin{align}
			\iprod{\widetilde\Sigma^{\rm inc}(X)-\widetilde\Sigma^{\rm inc}(Y)\,,\ X-Y}
			\geq 0
			\qquad \forall\;X,Y\in \dev\Sym(n)\,.
		\end{align}
		Since $\dev\Sym(n)$ is a linear convex space,
		the classical characterization of monotone operators applies:
		the above holds if and only if
		\begin{align}
			\iprod{\D\widetilde\Sigma^{\rm inc}(X).H,H}\geq 0
			\qquad
			\forall\;X\in \dev\Sym(n)\,,
			\quad\forall\;H\in \dev\Sym(n)\,.
		\end{align}
	\end{proof}
	As in the vector case, strict positivity in \eqref{eq:matrix_intrinsic_local} for every nonzero $H$ implies strict intrinsic matrix monotonicity.
	%
	%
	\subsection{Vector monotonicity versus matrix monotonicity for incompressible materials}
	Let
	\begin{equation}
		\Sigma^{\rm inc}_f\col \mathcal M \to \Sym(n)\,, \qquad \Sigma^{\rm inc}_f=\Sigma^{\rm inc}_f(U)
	\end{equation}
	be a general isotropic constitutive equation for incompressible elastic materials,
	where $\Sigma^{\rm inc}$ denotes a symmetric stress tensor and $U\in\Symp(n)\cap \SL(n)$ is the positive definite stretch tensor. Since $\Sigma_f^{\rm inc}\col\mathcal M \to \Sym(n)$ is an isotropic tensor function, it satisfies
	\begin{align}
		\Sigma_f^{\rm inc}(Q^T\.\diag(\lambda_1,\dotsc,\lambda_n)\. Q)=Q^T\.\Sigma_f^{\rm inc}(\diag(\lambda_1,\dotsc,\lambda_n))\. Q
	\end{align}
	and therefore
	\begin{align}
		\Sigma_f^{\rm inc}(U)\colonequals \Sigma_f^{\rm inc}(\underbrace{Q^T\.\diag(\lambda_1,\dotsc,\lambda_n)\. Q}_{U\in\mathcal M}) = \underbrace{Q^T\.\diag(f^{\rm inc}(\lambda_1,\dotsc,\lambda_n))\. Q}_{\Sigma_f^{\rm inc}(U)\.\in\Sym(n)}
		\label{eq:introductionMatrixFunction1}
	\end{align}
	with a vector-function $f^{\rm inc}=(f_1^{\rm inc},f_2^{\rm inc},\dotsc,f_n^{\rm inc})\col\mathcal S \to\R^n$ which satisfies
	\begin{align}
		f_i^{\rm inc}(\lambda_{\pi(1)},\lambda_{\pi(2)},\dotsc,\lambda_{\pi(n)}) = f_{\pi(i)}^{\rm inc}(\lambda_1,\dotsc,\lambda_n)
	\end{align}
	for any permutation $\pi\col\{1,\dotsc,n\}\to\{1,\dotsc,n\}$.
	{Of course, for $U=Q^T\.\diag(\lambda)\.Q$,
		we have $\log U = Q^T\.\diag(\log\lambda)\.Q$. Through the mappings
		$\widetilde\Sigma^{\rm inc}\col \dev\Sym(n)\to \Sym(n)$, $\widetilde\Sigma^{\rm inc}(X) \colonequals \Sigma^{\rm inc}(\exp X)$ and $\widetilde f^{\rm inc}\col \Pi\to \R^n$, $\widetilde f^{\rm inc}(u) \colonequals f^{\rm inc}(e^u)$ we may write the isotropy condition as
		\begin{align}
			\widetilde \Sigma_f^{\rm inc}(X)\colonequals \widetilde \Sigma_f^{\rm inc}(\underbrace{Q^T\.\diag(u_1,\dotsc,u_n)\. Q}_{X\in\dev\Sym(n)}) = \underbrace{Q^T\.\diag(\widetilde f^{\rm inc}(u_1,\dotsc,u_n))\. Q}_{\widetilde\Sigma_f^{\rm inc}(X)\.\in\Sym(n)} \qquad\forall\;Q\in\On\label{eq:introductionMatrixFunction2}
		\end{align}
		where $X=\log U\in \dev\Sym(n)$ and $u=(u_1,\dotsc,u_n)=(\log \lambda_1,\dotsc,\log \lambda_n)\in \Pi$.
	}
	Since $\Pi$ is permutation-invariant and $\Sym(\Pi)=\dev\Sym(n)$, the spectral equivalence after logarithmic reparametrization is a direct application of Theorem \ref{theorem:mainResult}.
	\begin{proposition}[Spectral equivalence]\label{prop:spectral-equivalence}
		Intrinsic matrix monotonicity, respectively strict intrinsic matrix monotonicity,
		of tensor fields $\Sigma_f^{\rm inc}$ on $\mathcal M$ is equivalent to
		intrinsic vector monotonicity, respectively strict intrinsic vector monotonicity,
		of $f^{\rm inc}$ on $\mathcal S$, cf.\ Figure~\ref{fig:intrinsic-spectral-equivalence}.
	\end{proposition}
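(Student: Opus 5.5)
The plan is to reduce the statement to Theorem~\ref{theorem:mainResult}, applied to the permutation-invariant set $M=\Pi$ after logarithmic reparametrization.

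First I rewrite both monotonicity notions in log coordinates. Via the global diffeomorphism $\Phi(U)=\log U$ from $\mathcal M$ onto $\dev\Sym(n)$, condition \eqref{eq:matrix_intrinsic_global} for $\Sigma^{\rm inc}_f$ is literally
\begin{align*}
\iprod{\widetilde\Sigma^{\rm inc}_f(X)-\widetilde\Sigma^{\rm inc}_f(Y)\,,X-Y}\geq 0\qquad\forall\;X,Y\in\dev\Sym(n)\,.
\end{align*}
This is ordinary matrix monotonicity of $\widetilde\Sigma^{\rm inc}_f$ on $\Sym(\Pi)$. The strict version, with $U\neq\widetilde U$, corresponds to $X\neq Y$ because $\Phi$ is bijective. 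Likewise, via $\lambda=e^u$, condition \eqref{im} for $f^{\rm inc}$ is ordinary vector monotonicity of $\widetilde f^{\rm inc}$ on $\Pi$, and $\lambda\neq\mu$ corresponds to $u\neq v$.

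Second I check the hypotheses of Theorem~\ref{theorem:mainResult}. The set $\Pi$ is invariant under coordinate permutations. The identity $\Sym(\Pi)=\dev\Sym(n)$ holds because a symmetric matrix is traceless if and only if its eigenvalues sum to zero. The function $\widetilde f^{\rm inc}$ is symmetric (permutation equivariant): the permutation identity for $f^{\rm inc}$ transfers through the componentwise exponential, since $e^{u_{\pi}}=(e^{u})_{\pi}$. Finally, \eqref{eq:introductionMatrixFunction2} shows that $\widetilde\Sigma^{\rm inc}_f$ is exactly the spectral tensor function $\Sigma_{\widetilde f^{\rm inc}}$ generated by $\widetilde f^{\rm inc}$ on $\Sym(\Pi)$. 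This rests on the fact that $\exp(Q^T\diag(u)Q)=Q^T\diag(e^u)Q$, so the same $Q$ diagonalizes $X$ and $U$.

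Third I apply Theorem~\ref{theorem:mainResult} with $M=\Pi$ to $\widetilde f^{\rm inc}$. It gives that $\widetilde f^{\rm inc}$ is (strictly) vector-monotone on $\Pi$ if and only if $\Sigma_{\widetilde f^{\rm inc}}=\widetilde\Sigma^{\rm inc}_f$ is (strictly) matrix-monotone on $\dev\Sym(n)$. Translating back through the first step yields the proposition in both the strict and non-strict forms.

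The main obstacle is the well-definedness point in the second step. One must make sure $\widetilde\Sigma^{\rm inc}_f$ is genuinely the tensor function generated by $\widetilde f^{\rm inc}$, independent of the choice of diagonalizing $Q$ when eigenvalues coincide. This follows from the permutation symmetry of $f^{\rm inc}$, together with the observation that $\log$ and $\exp$ preserve the eigenspace decomposition.

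A second, minor point is that Theorem~\ref{theorem:mainResult} is quoted for general permutation-invariant sets $M$. I will therefore note explicitly that $\Pi$, a linear subspace rather than an open subset of $\R^n$, is admissible. No differentiability is needed for this step.
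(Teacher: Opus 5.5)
Your proposal is correct and follows the paper's argument: the paper also log-reparametrizes and applies Theorem~\ref{theorem:mainResult} with $M=\Pi$, using that $\Pi$ is permutation-invariant and $\Sym(\Pi)=\dev\Sym(n)$. Your version additionally spells out the hypothesis checks that the paper leaves implicit: the symmetry of $\widetilde f^{\rm inc}$, the identification $\widetilde\Sigma_f^{\rm inc}=\Sigma_{\widetilde f^{\rm inc}}$, and the correspondence of the strict conditions under the bijection $\log$.
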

	
	%
	%
	
	\begin{figure}[h!]
		\centering
		\tikzset{
			monoBox/.style={draw, rounded corners, align=center, inner sep=6pt, font=\small},
			eqv/.style={-Latex, line width=1.9pt},
			impl/.style={-Latex, line width=1.9pt},
			noimpl/.style={-Latex, dashed, line width=1.9pt},
			dbl/.style={<->, Latex-Latex, line width=1.9pt},
			note/.style={font=\scriptsize, align=center}
		}
			\begin{tikzpicture}[>=Stealth, font=\small, node distance=2.6cm]
				\node (M) [draw, rounded corners, align=center, minimum width=5.2cm]
				{$\mathcal M=\{U\in\Symp(n)\.|\det U=1\}$\\
					intrinsic matrix monotonicity\\ $\iprod{\Sigma^{\rm inc}(U)-\Sigma^{\rm inc}(\widetilde{U})\,,\log U-\log \widetilde{U}}\geq0$};
				\node (S) [draw, rounded corners, align=center, right=of M, minimum width=5.2cm]
				{$\mathcal S=\{\lambda\in\R_+^n\.|\.\lambda_1\lambda_2\dotsc\lambda_n=1\}$\\
					intrinsic vector monotonicity\\ $\iprod{f^{\rm inc}(\lambda)-f^{\rm inc}(\mu)\,,\log\lambda-\log\mu}\geq0$};
				\draw[<->,very thick] (M) -- node[above,align=center]
				{isotropy } (S);
		\end{tikzpicture}
		\caption{Spectral equivalence of intrinsic vector and matrix monotonicity also holding in the strict case.}
		\label{fig:intrinsic-spectral-equivalence}
	\end{figure}
	
	\begin{proposition}[Differential spectral equivalence]
		\label{prop:differential-spectral-equivalence}
		Let $\widetilde f^{\rm inc}\in C^1(\Pi)$ be permutation-equivariant and let $\widetilde\Sigma_f^{\rm inc}\col\dev\Sym(n)\to\Sym(n)$ be the corresponding isotropic tensor function defined by \eqref{eq:introductionMatrixFunction2}.
		Then
		\begin{align}
			\iprod{\D\widetilde f^{\rm inc}(u).k,k}\geq0
			\qquad
			\forall\;u\in\Pi\,,\quad k\in\Pi
		\end{align}
		if and only if
		\begin{align}
			\iprod{\D\widetilde\Sigma_f^{\rm inc}(X).H,H}\geq0
			\qquad
			\forall\;X\in\dev\Sym(n)\,,\quad H\in\dev\Sym(n)\,.
		\end{align}
		The same equivalence holds with strict inequalities for all non-zero directions $k$ and $H$.
	\end{proposition}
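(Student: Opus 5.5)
The plan is to chain the local--global equivalences already proved on the convex sets $\Pi$ and $\dev\Sym(n)$ with the spectral equivalence of Proposition~\ref{prop:spectral-equivalence}, which in turn rests on Theorem~\ref{theorem:mainResult} applied to $M=\Pi$. This gives the non-strict statement. The strict statement needs a separate pointwise argument.

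\textbf{Non-strict case.} The chain is as follows.
\begin{itemize}
\item By Theorem~\ref{thm:biot-equivalence}, the differential condition on $\widetilde f^{\rm inc}$ is equivalent to two-point monotonicity of $\widetilde f^{\rm inc}$ on $\Pi$.
\item By Theorem~\ref{theorem:mainResult} with $M=\Pi$, using that $\Pi$ is permutation invariant and $\Sym(\Pi)=\dev\Sym(n)$, this is equivalent to two-point monotonicity of $\widetilde\Sigma^{\rm inc}_f$ on $\dev\Sym(n)$.
\item By Theorem~\ref{thm:matrix_equivalence}, or directly by the line-segment and difference-quotient arguments on the linear space $\dev\Sym(n)$, this is equivalent to the differential condition on $\widetilde\Sigma^{\rm inc}_f$.
\end{itemize}
One subtlety is that the pairing $\langle\widetilde\Sigma(X)-\widetilde\Sigma(Y),X-Y\rangle$ sees only the deviatoric part of $\widetilde\Sigma$, since $X-Y$ is traceless. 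This is harmless because Theorem~\ref{theorem:mainResult} is stated for $f$ with values in $\R^n$ paired against differences in $M$.

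\textbf{Strict case.} The two-point route does not work here, because strict two-point monotonicity need not give strict pointwise positivity of the derivative. The argument therefore has to be direct and pointwise. Fix $X=Q^T\diag(u)Q$; by isotropy we may take $Q=\id$. Use the Daleckii--Krein type formula for the derivative of an isotropic tensor function at a diagonal point. For $H\in\dev\Sym(n)$ with diagonal part $k=(H_{11},\dots,H_{nn})\in\Pi$,
\begin{align}
\langle \D\widetilde\Sigma^{\rm inc}_f(X).H,H\rangle=\langle \D\widetilde f^{\rm inc}(u).k,k\rangle+\sum_{i\neq j}\frac{\widetilde f_i(u)-\widetilde f_j(u)}{u_i-u_j}\,H_{ij}^2 .
\end{align}
When $u_i=u_j$, the quotient is replaced by its limit $(\partial_i \widetilde f_i-\partial_j\widetilde f_i)(u)$. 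The coincident-eigenvalue case should be handled by continuity, or by taking the limit of the formula at nearby distinct eigenvalues.

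Given this formula, the direction ``matrix condition $\Rightarrow$ vector condition'' follows by taking $H=\diag(k)$, which lies in $\dev\Sym(n)$ exactly when $k\in\Pi$.

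For the converse, positivity of the diagonal term is exactly the vector hypothesis. It remains to show the off-diagonal coefficients are nonnegative, and strictly positive in the strict case. Take $k=e_i-e_j\in\Pi$ (up to scaling). Then $k\in\Pi$, and the hypothesis gives
\begin{align}
\frac{d}{ds}\Big[\bigl(\widetilde f_i-\widetilde f_j\bigr)(u+s(e_i-e_j))\Big]\ge 0 .
\end{align}
Permutation equivariance gives $\widetilde f_i=\widetilde f_j$ at the point $w$ where $w_i=w_j$, which is reached along this segment inside $\Pi$. Integrating from $w$ to $u$ then gives $(\widetilde f_i-\widetilde f_j)(u)/(u_i-u_j)\ge0$, with strict inequality in the strict case. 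At coincident eigenvalues, the same identity applied to the derivative gives the limit value $\tfrac12\langle \D\widetilde f(u)(e_i-e_j),e_i-e_j\rangle$, which is positive under the strict hypothesis.

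\textbf{Expected obstacle.} The main work is justifying the derivative formula on $\dev\Sym(n)$, including at repeated eigenvalues, for a function that is only $C^1$. I would first establish the formula at distinct eigenvalues using smooth dependence of the spectral decomposition, and then extend it by density and continuity of both sides.
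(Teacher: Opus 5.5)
Your proposal is correct. Its strict-case part is essentially the paper's proof. The paper likewise reduces to $X=\diag(u)$ and splits $\langle \D\widetilde\Sigma_f^{\rm inc}(X).H,H\rangle$ into an eigenvalue part and an eigendirection part. It cites this splitting from Carlson--Hoger and Chadwick--Ogden rather than deriving it, and obtains the converse from $H=\diag(k)$. It gets the sign of the divided differences from two-point monotonicity on $\Pi$, applied to the pair $u$, $\mathcal P^{(i,j)}u$, via
\[
\langle \widetilde f^{\rm inc}(u)-\widetilde f^{\rm inc}(\mathcal P^{(i,j)}u),\,u-\mathcal P^{(i,j)}u\rangle=2\,(\widetilde f_i^{\rm inc}-\widetilde f_j^{\rm inc})(u)\,(u_i-u_j).
\]
Your integration of $\langle \D\widetilde f^{\rm inc}(u+s\,d).d,d\rangle$, $d=e_i-e_j$, from the point $w$ is the same computation. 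Indeed, $w$ is the midpoint of $u$ and $\mathcal P^{(i,j)}u$ on that line. Your version does make explicit what the paper's ``the strict case follows analogously'' leaves to the reader.

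The genuinely different piece is your non-strict argument, which chains Theorem~\ref{thm:biot-equivalence}, Theorem~\ref{theorem:mainResult} on $M=\Pi$, and Theorem~\ref{thm:matrix_equivalence}. This route never uses the derivative formula; it only needs $\widetilde\Sigma_f^{\rm inc}\in C^1$. The price is the full global spectral theorem (Birkhoff--von Neumann plus rearrangement). As you correctly note, it also cannot deliver strict pointwise positivity. Your pointwise argument already gives nonnegativity of all coefficients, so it covers the non-strict case as well and the chain is redundant. The paper simply runs the pointwise argument once for both cases.

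Two small technical points.
\begin{enumerate}
\item $\widetilde f^{\rm inc}$ lives only on $\Pi$ and $e_i\notin\Pi$, so the coincident-eigenvalue value should be written as the single directional derivative $\D\widetilde f_i^{\rm inc}(u).(e_i-e_j)$. By equivariance this equals $\tfrac12\langle \D\widetilde f^{\rm inc}(u).(e_i-e_j),e_i-e_j\rangle$, the form you use later, rather than as separate partials $\partial_i,\partial_j$.
\item Your density-and-continuity extension of the formula presupposes that $\widetilde\Sigma_f^{\rm inc}$ is already $C^1$ at repeated eigenvalues. The cleanest fix is to set $F(v)\colonequals\widetilde f^{\rm inc}\bigl(v-\tfrac1n(v_1+\dots+v_n)(1,\dots,1)\bigr)$. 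This $F$ is $C^1$ and permutation-equivariant on $\R^n$, with $\Sigma_F|_{\dev\Sym(n)}=\widetilde\Sigma_f^{\rm inc}$. You can then invoke the standard differentiability theorem for isotropic tensor functions, which is what the paper does by citation.
\end{enumerate}
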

	\begin{proof}
		Due to isotropy, it is sufficient to consider $X=\diag(u)$ with $u\in\Pi$.
		For arbitrary $H\in\dev\Sym(n)$, let $h=(H_{11},\dotsc,H_{nn})\in\Pi$.
		All derivatives of $\widetilde f^{\rm inc}$ in the following are understood as derivatives of the map $\widetilde f^{\rm inc}\col\Pi\to\R^n$ on the linear space $\Pi$.
		The derivative of an isotropic spectral tensor function separates into the contribution due to changes of the eigenvalues and the contribution due to changes of the corresponding eigendirections, cf.\ \cite{carlson1986derivative,chadwick1971theorem}. More precisely,
		\begin{align}
			\iprod{\D\widetilde\Sigma_f^{\rm inc}(X).H,H}
			&=
			\iprod{\D\widetilde f^{\rm inc}(u).h,h}
			+
			\sum_{i\neq j}
			\frac{\widetilde f_i^{\rm inc}(u)-\widetilde f_j^{\rm inc}(u)}{u_i-u_j}\,H_{ij}^2\,,
			\label{eq:differential-spectral-formula}
		\end{align}
		where, if $u_i=u_j$, the quotient is understood by continuous extension\footnote{
			To see this, fix $v\in\Pi$ with $v_i=v_j$, let $d=e_i-e_j$, and consider the one-parameter curve $u(s)=v+s\.d$, $s\in\mathbb R$.
			Permutation equivariance implies $\widetilde f_i^{\rm inc}(v)=\widetilde f_j^{\rm inc}(v)$.
			Thus, with $q(u)\colonequals\iprod{\widetilde f^{\rm inc}(u),d} =\widetilde f_i^{\rm inc}(u)-\widetilde f_j^{\rm inc}(u)$,
			\begin{align*}
				\frac{\widetilde f_i^{\rm inc}(v+s\.d)-\widetilde f_j^{\rm inc}(v+s\.d)}{(v_i+s)-(v_j-s)}
				=
				\frac{q(v+s\.d)-q(v)}{2\.s}
				\xrightarrow{s\to0}
				\frac12\.\iprod{\D q(v),d}
				=
				\frac12\.\iprod{\D\widetilde f^{\rm inc}(v).d,d}\,.
			\end{align*}}
		with value
		\begin{align}
			\frac{1}{2}\.\iprod{\D\widetilde f^{\rm inc}(u).(e_i-e_j),e_i-e_j}\,.
			\label{eq:continuous-divided-difference}
		\end{align}
		Assume first that $\widetilde f^{\rm inc}$ is differentially vector-monotone on $\Pi$.
		Since $\Pi$ is convex, this implies vector monotonicity on $\Pi$.
		For $u_i\neq u_j$, let $\mathcal{P}^{(i,j)}u$ denote the vector obtained by interchanging the components $u_i$ and $u_j$.
		Permutation equivariance gives $\widetilde f^{\rm inc}(\mathcal{P}^{(i,j)}u)=\mathcal{P}^{(i,j)}\widetilde f^{\rm inc}(u)$, and therefore
		\begin{align}
			0\leq
			\iprod{\widetilde f^{\rm inc}(u)-\widetilde f^{\rm inc}(\mathcal{P}^{(i,j)}u), u-\mathcal{P}^{(i,j)}u}
			&=
			2\.\bigl(\widetilde f_i^{\rm inc}(u)-\widetilde f_j^{\rm inc}(u)\bigr)(u_i-u_j)\notag\\
			\iff\qquad
			\frac{\widetilde f_i^{\rm inc}(u)-\widetilde f_j^{\rm inc}(u)}{u_i-u_j}&\geq0\,.
		\end{align}
		If $u_i=u_j$, the same conclusion follows from \eqref{eq:continuous-divided-difference}, since $e_i-e_j\in\Pi$.
		Thus both terms on the right-hand side of \eqref{eq:differential-spectral-formula} are non-negative, which proves differential matrix monotonicity.
		
		Conversely, choose $X=\diag(u)$ and $H=\diag(k)$ with $k\in\Pi$.
		Then all off-diagonal entries of $H$ vanish, and
		\begin{equation}
			\iprod{\D\widetilde\Sigma_f^{\rm inc}(X).H,H}
			=
			\iprod{\D\widetilde f^{\rm inc}(u).k,k}\,.
		\end{equation}
		Hence differential matrix monotonicity implies differential vector monotonicity.
		The strict case follows analogously.
	\end{proof}
	In particular, the preceding global and differential spectral equivalences apply to the extension-independent representative $\TBiot^{\rm inc}$ introduced in the introduction.
	
	%
	%
	%
	\subsection{Monotonicity of the zero-pressure extrinsic Biot-stress tensor for incompressible materials}\label{smBt}
	Consider the extension-dependent zero-pressure extrinsic Biot-stress tensor for incompressible materials, i.e.\ $\TBiot^{\rm ext,0}\col \mathcal M\to \Sym(n)$, defined by
	\begin{align}
		\TBiot^{\rm ext,0}(U)\colonequals \D_U\What(U)\in\Sym(n)\,.
	\end{align}
	In this case, due to the isotropy of this tensor field, the corresponding principal Biot stresses are
	\begin{align}
		(\tBiot^{\rm ext,0})_i
		=
		\frac{\partial g}{\partial \lambda_i}(\lambda_1,\dotsc,\lambda_n)\,,
		\qquad i=1,\dotsc,n.
	\end{align}
	Define the log-reparametrized mappings
	\begin{align}
		\TBiotTilde^{\rm ext,0}\col
		\dev\Sym(n)\to \Sym(n)\,,
		\qquad
		\TBiotTilde^{\rm ext,0}(X) \colonequals \TBiot^{\rm ext,0}(\exp X)
	\end{align}
	and
	\begin{align}
		\tBiotTilde^{\rm ext,0}\col \Pi \to \R^n,
		\qquad
		\tBiotTilde^{\rm ext,0}(u) \colonequals \tBiot^{\rm ext,0}(e^u)\,.
	\end{align}
	\noindent Hence, the following holds true for hyperelasticity {of incompressible materials}, assuming sufficient regularity:
	\begin{align*}
		\TBiotTilde^{\rm ext,0} & \ \ \text{matrix-monotone on }\dev\Sym(n)          \\\quad &\iff\quad \tBiotTilde^{\rm ext,0}\ \ \text{vector-monotone on }\Pi\notag\\\quad &\iff\quad \iprod{\D_u\tBiotTilde^{\rm ext,0}(u).\,k,\;k}_{\R^n} \geq 0
		\quad \forall\;u\in\Pi\,,\ \forall\;k\in\Pi\,,
		\\
		\TBiotTilde^{\rm ext,0} & \ \ \text{strictly matrix-monotone on }\dev\Sym(n) \\\quad &\iff\quad \tBiotTilde^{\rm ext,0}\ \ \text{strictly vector-monotone on }\Pi\,.\notag
	\end{align*}
	Moreover, the pointwise condition
	\begin{align}
		\iprod{\D_{\log \lambda}\tBiotTilde^{\rm ext,0}(\log \lambda).\.k,\;k}_{\R^n} > 0
		\qquad \forall\;\lambda\in\mathcal S\,,\ \forall\;k\in\Pi\setminus\{0\}
		\label{eq:strong-vector-intrinsic-biot}
	\end{align}
	is sufficient for strict intrinsic vector monotonicity and therefore, by Proposition~\ref{prop:spectral-equivalence}, for strict intrinsic matrix monotonicity.
	By Proposition~\ref{prop:differential-spectral-equivalence}, condition~\eqref{eq:strong-vector-intrinsic-biot} is moreover equivalent to strict differential intrinsic matrix monotonicity of $\TBiotTilde^{\rm ext,0}$.
	Note that the zero-pressure extrinsic Biot-stress tensor for incompressible materials is the restriction of the standard ambient
	Biot stress, i.e.\ for compressible materials $\TBiot(U)=\D_U\What(U)\,, \ U\in \Symp(3)\,,$ restricted to the incompressibility manifold $\mathcal M.$ Thus, in this subsection no pressure term is subtracted; equivalently, we select the particular extrinsic representative corresponding to the pressure choice $p=0$.
	Let the elastic ambient energy be isotropic, so that $\What(U)=g(\lambda_1,\lambda_2,\lambda_3)\,,$ where $\lambda_i>0$ are the principal stretches of $U$. If $U=Q^T\.\diag(\lambda_1,\lambda_2,\lambda_3)\.Q,$ then, by isotropy, $\D_U\What(U) = Q^T\.\diag(\tBiot^{\rm ext,0})\.Q,$ where the principal Biot stresses are $(\tBiot^{\rm ext,0})_i=\frac{\partial g}{\partial \lambda_i}(\lambda_1,\lambda_2,\lambda_3)$, $i=1,2,3.$ Due to the incompressibility constraint $\lambda_1\lambda_2\lambda_3=1,$ we write $\lambda_3=\frac{1}{\lambda_1\lambda_2}$ and introduce the reduced energy
	\begin{align}
		g^{\rm red}\col \R^2_+\to \R,\qquad g^{\rm red}(\lambda_1,\lambda_2)
		\colonequals
		g\left(\lambda_1,\lambda_2,
		\frac{1}{\lambda_1\lambda_2}\right).
	\end{align}
	By the chain rule, $\frac{\partial \lambda_3}{\partial \lambda_1} = -\frac{\lambda_3}{\lambda_1}, \ \frac{\partial \lambda_3}{\partial \lambda_2} = -\frac{\lambda_3}{\lambda_2}.$ Therefore,
	\begin{align}
		\frac{\partial g^{\rm red}}{\partial \lambda_1}
		& =
		\frac{\partial g}{\partial \lambda_1} -
		\frac{\lambda_3}{\lambda_1}
		\frac{\partial g} {\partial \lambda_3}
		=
		(\tBiot^{\rm ext,0})_1-\frac{\lambda_3}{\lambda_1} (\tBiot^{\rm ext,0})_3\,,
		\notag \\
		\frac{\partial g^{\rm red}}{\partial \lambda_2}
		& =
		\frac{\partial g}{\partial \lambda_2} -
		\frac{\lambda_3}{\lambda_2}
		\frac{\partial g} {\partial \lambda_3}
		=
		(\tBiot^{\rm ext,0})_2-\frac{\lambda_3}{\lambda_2} (\tBiot^{\rm ext,0})_3\,.
		\label{eq:red-derivative-2}
	\end{align}
	Hence the reduced energy does not determine the three ambient principal Biot
	stresses $(\tBiot^{\rm ext,0})_1, (\tBiot^{\rm ext,0})_2$ and $(\tBiot^{\rm ext,0})_3$ separately. It determines only the two combinations
	appearing in \eqref{eq:red-derivative-2}. Equivalently,
	\begin{align}
		(\tBiot^{\rm ext,0})_1
		& =
		\frac{\partial g^{\rm red}}{\partial \lambda_1}
		+
		\frac{\lambda_3}{\lambda_1} (\tBiot^{\rm ext,0})_3\,,
		\qquad
		(\tBiot^{\rm ext,0})_2
		=
		\frac{\partial g^{\rm red}}{\partial \lambda_2}
		+
		\frac{\lambda_3}{\lambda_2} (\tBiot^{\rm ext,0})_3\,.
		\label{eq:T2-from-gred}
	\end{align}
	Thus the ambient tensor $\D_U\What(U)$ restricted to $\mathcal M$ still
	depends on the chosen extension of the incompressible energy away from
	$\mathcal M$, through the quantity
	\begin{align}
		(\tBiot^{\rm ext,0})_3=
		\frac{ \partial g}{\partial \lambda_3}
		\left(\lambda_1,\lambda_2,\frac{1}{\lambda_1\lambda_2}\right).
	\end{align}
	Let us now change the space on which we are working, i.e.\ let us consider the change of variables $x_i\colonequals \log\lambda_i, \ i=1,2,3.$ On $\mathcal M$ one has $x_1+x_2+x_3=0, \ x_3=-x_1-x_2.$ Define
	\begin{align}
		\gtilde_{\rm red}(x_1,x_2)
		\colonequals
		g^{\rm red}(e^{x_1},e^{x_2})
		=
		g(e^{x_1},e^{x_2},e^{-x_1-x_2})\,.
	\end{align}
	Then
	\begin{align}
		\frac{\partial }{\partial x_1} \gtilde_{\rm red}
		=
		\lambda_1\ \frac{\partial g^{\rm red}}{\partial \lambda_1}\,,
		\qquad
		\frac{\partial }{\partial x_2} \gtilde_{\rm red}=
		\lambda_2\partial_{\lambda_2}g^{\rm red}\,.
	\end{align}
	Using \eqref{eq:red-derivative-2}, we obtain
	\begin{align}
		\frac{\partial }{\partial x_1} \gtilde_{\rm red}
		& =
		\lambda_1(\tBiotTilde^{\rm ext,0})_1-\lambda_3(\tBiotTilde^{\rm ext,0})_3\,,\qquad \frac{\partial }{\partial x_2} \gtilde_{\rm red}
		=
		\lambda_2(\tBiotTilde^{\rm ext,0})_2-\lambda_3(\tBiotTilde^{\rm ext,0})_3\,.
		\label{eq:phi-x2-kirchhoff-diff}
	\end{align}
	If $\tau_i\colonequals \lambda_i\.t_i$ denote the principal Kirchhoff stresses, then
	\eqref{eq:phi-x2-kirchhoff-diff} can be written
	as
	\begin{align}
		\D\gtilde_{\rm red}(x_1,x_2)
		=
		\bigl(\tau_1-\tau_3\,,\tau_2-\tau_3\bigr).
	\end{align}
	Thus the gradient of the logarithmic reduced energy determines the differences
	of the principal Kirchhoff stresses, not the differences of the principal Biot
	stresses.
	Due to the incompressibility constraint, the monotonicity of the principal Biot stress vector
	with respect to $x=\log\lambda$ means
	\begin{align}
		\iprod{\tBiotTilde^{\rm ext,0}(x)-\tBiotTilde^{\rm ext,0}(y)\,,x-y} \geq 0
		\label{eq:Biot-log-monotonicity-3D}
	\end{align}
	for all $x=(x_1,x_2,-x_1-x_2)\,, y=(y_1,y_2,-y_1-y_2).$
	Then, we have
	\begin{align}
		x-y=
		\bigl(\Delta x_1,\Delta x_2,
		-\Delta x_1-\Delta x_2\bigr),
	\end{align}
	where $\Delta x_\alpha\colonequals x_\alpha-y_\alpha, \ \alpha=1,2.$ Therefore,
	\begin{align*}
		\iprod{\tBiotTilde^{\rm ext,0}(x)-\tBiotTilde^{\rm ext,0}(y)\,,x-y}
		& =
		\big[((\tBiotTilde^{\rm ext,0})_1-(\tBiotTilde^{\rm ext,0})_3)(x)-((\tBiotTilde^{\rm ext,0})_1-(\tBiotTilde^{\rm ext,0})_3)(y)\big]\Delta x_1
		\\
		& \quad+
		\big[((\tBiotTilde^{\rm ext,0})_2-(\tBiotTilde^{\rm ext,0})_3)(x)-((\tBiotTilde^{\rm ext,0})_2-(\tBiotTilde^{\rm ext,0})_3)(y)\big]\Delta x_2\,.
	\end{align*}
	Consequently, the monotonicity condition
	\eqref{eq:Biot-log-monotonicity-3D} is equivalent to the monotonicity of the
	two-dimensional vector field
	\begin{align}
		(x_1,x_2)\mapsto
		\mathcal B(x_1,x_2)
		\colonequals
		\bigl((\tBiotTilde^{\rm ext,0})_1-(\tBiotTilde^{\rm ext,0})_3\,,(\tBiotTilde^{\rm ext,0})_2-(\tBiotTilde^{\rm ext,0})_3\bigr)^T.
	\end{align}
	Explicitly,
	\begin{align}
		&
		\big[\mathcal B_1(x)-\mathcal B_1(y)\big](x_1-y_1)
		+
		\big[\mathcal B_2(x)-\mathcal B_2(y)\big](x_2-y_2)
		\geq 0
	\end{align}
	for all $x,y\in\R^2$, where
	\begin{align}
		\mathcal B_1=(\tBiotTilde^{\rm ext,0})_1-(\tBiotTilde^{\rm ext,0})_3\,,
		\qquad\qquad
		\mathcal B_2=(\tBiotTilde^{\rm ext,0})_2-(\tBiotTilde^{\rm ext,0})_3\,.
	\end{align}
	If the stress field is $C^1$, this is locally equivalent to
	\begin{align}
		\sym\D_{(x_1,x_2)} \mathcal B\in \Sym^{+}(2)\,.
	\end{align}
	We can express the components of $\mathcal B$ in terms of the reduced energy
	and the remaining ambient quantity $t_3$. From
	\eqref{eq:red-derivative-2} one obtains
	\begin{align}
		(\tBiotTilde^{\rm ext,0})_1-(\tBiotTilde^{\rm ext,0})_3
		& =
		\frac{\partial g^{\rm red}}{\partial \lambda_1}
		+
		\left(\frac{\lambda_3}{\lambda_1}-1\right)(\tBiotTilde^{\rm ext,0})_3\,,
		\label{eq:T1-minus-T3-gred}
		\\
		(\tBiotTilde^{\rm ext,0})_2-(\tBiotTilde^{\rm ext,0}) _3
		& =
		\frac{\partial g^{\rm red}}{\partial \lambda_2}
		+
		\left(\frac{\lambda_3}{\lambda_2}-1\right)(\tBiotTilde^{\rm ext,0})_3\,.
		\label{eq:T2-minus-T3-gred}
	\end{align}
	The preceding formulas show the essential difference between the ambient Biot
	stress and the stress response determined intrinsically on the incompressible
	manifold. The reduced energy $g^{\rm red}$ does not determine the vector
	\begin{align}
		( (\tBiotTilde^{\rm ext,0})_1- (\tBiotTilde^{\rm ext,0})_3\,, (\tBiotTilde^{\rm ext,0})_2- (\tBiotTilde^{\rm ext,0})_3)
	\end{align}
	of principal zero-pressure extrinsic Biot stress differences unless the ambient extension, or equivalently
	the value of $t_3$, is specified. Hence logarithmic monotonicity of
	$\TBiot^{\rm ext,0}=\D_U\What$ on $\mathcal M$ is not a condition
	expressible solely in terms of the reduced energy $g^{\rm red}$. Consequently, the monotonicity of the zero-pressure extrinsic Biot-stress tensor cannot in general be expressed solely in terms of the reduced energy $g^{\rm red}$. It also depends on the chosen ambient extension of the energy away from the incompressibility constraint. By contrast, the corresponding logarithmic monotonicity condition for the Kirchhoff stress is determined entirely by the reduced logarithmic energy:
	\begin{align}
		\D_{(x_1,x_2)} \gtilde_{\rm red} = \bigl(\tau_1-\tau_3\,,\tau_2-\tau_3\bigr),
	\end{align}
	and therefore local monotonicity of the Kirchhoff stress in logarithmic principal variables is simply
	\begin{align}
		\D^2_{(x_1,x_2)} \gtilde_{\rm red}(x_1,x_2)\in \Sym^{+}(2)\,.
	\end{align}
	However, for the principal zero-pressure extrinsic Biot stress,
	we still can write this condition in terms of the Hessian of the ambient energy. Let
	\begin{align}
		H\colonequals \D^2_\lambda g(\lambda_1,\lambda_2,\lambda_3) = \left(\partial^2_{\lambda_i\lambda_j}g\right)_{i,j=1}^3.
	\end{align}
	Furthermore, set
	\begin{align}
		A\colonequals  \matr{1 & 0 & -1 \\ 0&1&-1}
		\qquad\implies\qquad
		B = \matr{(\tBiotTilde^{\rm ext,0})_1-(\tBiotTilde^{\rm ext,0})_3 \\ (\tBiotTilde^{\rm ext,0})_2-(\tBiotTilde^{\rm ext,0})_3} = A\.\D_\lambda g\,.
	\end{align}
	Since $\lambda_1=e^{x_1}$, $\lambda_2=e^{x_2}$ and $\lambda_3=e^{-x_1-x_2}$,
	we have
	\begin{align}
		J_x\colonequals \D_x\lambda = \matr{\lambda_1 & 0 \\ 0&\lambda_2\\ -\lambda_3&-\lambda_3}.
	\end{align}
	Therefore, by the chain rule,
	\begin{align}
		\D_xB = A\.\D^2_\lambda g\.J_x.
	\end{align}
	Equivalently,
	\begin{align}
		\D_x \matr{(\tBiotTilde^{\rm ext,0})_1-(\tBiotTilde^{\rm ext,0})_3 \\ (\tBiotTilde^{\rm ext,0})_2-(\tBiotTilde^{\rm ext,0})_3} = A\.H\.J_x.
	\end{align}
	Hence the local logarithmic monotonicity of the ambient Biot stress is given by
	\begin{align}
		\sym\.\bigl(A\.\D^2_\lambda g\.J_x\bigr)\in {\Sym}^{+}(2)\,.
	\end{align}
%
%
	\subsection{Monotonicity of the extrinsic Biot stress tensor representative with vanishing third principal stress -- plane stress}\label{smB}
	Let us now consider the pressure-dependent Biot-type representative, i.e.\ the case of the full extrinsic Biot stress tensor given by
	\begin{align}
		\TBiot^{\rm ext}(U)
		=
		\D_U\What(U)-p\,U^{-1}.
	\end{align}
	In principal coordinates its values are
	\begin{align}
		(\tBiot^{\rm ext})_i
		=
		\frac{\partial g}{\partial \lambda_i}-\frac{p}{\lambda_i},
		\qquad i=1,2,3\,.
	\end{align}
	We impose the normalization (plane stress), i.e.\ $(\tBiot^{\rm ext})_3=0$.
	This fixes the pressure and thereby selects the plane-stress representative $\TBiot^{\rm ps}$.
	This gives $0= \frac{\partial g}{\partial \lambda_3}-\frac{p}{\lambda_3},$ and therefore $p=\lambda_3 \.\frac{\partial g}{\partial \lambda_3}$.
	Consequently,
	\begin{align}
		(\tBiot^{\rm ps})_1
		=
		\frac{\partial g}{\partial \lambda_1}-\frac{\lambda_3}{\lambda_1} \frac{\partial g}{\partial \lambda_3}\,,
		\qquad 
		(\tBiot^{\rm ps})_2
		=
		\frac{\partial g}{\partial \lambda_2}-\frac{\lambda_3}{\lambda_2} \frac{\partial g}{\partial \lambda_3}\,,
		\qquad 
		(\tBiot^{\rm ps})_3=0\,.
	\end{align}
	On the incompressibility constraint $\lambda_1\lambda_2\lambda_3=1,$ we define the reduced energy
	\begin{align}
		g^{\rm red}(\lambda_1,\lambda_2)
		\colonequals
		g\left(\lambda_1,\lambda_2,
		\frac{1}{\lambda_1\lambda_2}\right),
	\end{align}
	and by the chain rule one obtains
	\begin{align}
		(\tBiot^{\rm ps})_1
		=
		\frac{\partial g^{\rm red}}{\partial \lambda_1}\,,
		\qquad \qquad
		(\tBiot^{\rm ps})_2
		=
		\frac{\partial g^{\rm red}}{\partial \lambda_2}\,,
		\qquad \qquad
		(\tBiot^{\rm ps})_3=0\,.
	\end{align}
	Thus, once the distinguished third direction is fixed, this representative is determined by the reduced energy $g^{\rm red}$.
	The logarithmic monotonicity of the principal Biot stress vector is equivalent to the logarithmic monotonicity of the two-dimensional vector field
	\begin{align}
		\matr{(\tBiot^{\rm ps})_1 \\ (\tBiot^{\rm ps})_2},
	\end{align}
	i.e.
	the monotonicity condition is locally equivalent to
	\begin{align}
		\sym\D_{(\log\lambda_1,\log\lambda_2)}
		\bigl(\D_{(\lambda_1,\lambda_2)} g^{\rm red}\bigr)\in \Sym^{+}(2)\,.
	\end{align}
	We now write the corresponding tensor in invariant form. Let
	\begin{align}
		U=\sum_{i=1}^3\lambda_i N_i\,,
		\qquad
		N_i=q_i\otimes q_i\,,
	\end{align}
	be the spectral decomposition of $U$, with $N_3$ denoting the projector
	associated with the distinguished third principal direction. Since
	\begin{align}
		\D_U\What(U)U
		=
		\sum_{i=1}^3 \lambda_i \frac{\partial g}{\partial \lambda_i}N_i\,,
	\end{align}
	we have $\lambda_3\.\frac{\partial g}{\partial \lambda_3}=\iprod{\D_U\What(U)U,N_3}\,.$
	Thus the representative selected by $(\tBiot^{\rm ps})_3=0$ is
	\begin{align}
		\TBiot^{\rm ps}(U)
		=
		\D_U\What(U)
		-
		\iprod{\D_U\What(U)U,N_3} U^{-1}.
	\end{align}
	Therefore, for the reduced two-dimensional principal response selected by the plane-stress normalization, logarithmic vector monotonicity is locally equivalent to
	\begin{align}
		\sym\D_{(\log\lambda_1,\log\lambda_2)}
		\bigl(\D_{(\lambda_1,\lambda_2)} g^{\rm red}\bigr)\in \Sym^{+}(2)\,.
	\end{align}
	The normalization $(\tBiot^{\rm ps})_3=0$ singles out the third principal direction. Consequently, without additional structure fixing this direction, $\TBiot^{\rm ps}$ is not a permutation-equivariant isotropic tensor field on the full manifold $\mathcal M$, and Proposition \ref{prop:spectral-equivalence} does not apply directly to this representative.
	%
	%
	%
	%
	\section{Conclusion}
	We have discussed tensor and vector monotonicity concepts, with a focus on their consequences for compressible and incompressible isotropic constitutive laws. The focus on properties of the Biot-stress tensor is motivated solely by its current use in formulating constitutive restrictions in nonlinear elasticity. 
	The logarithmic monotonicity notion on the incompressible stretch manifold
	$\mathcal M=\Symp(3)\cap\SL(3)$ is naturally related to the logarithmic strain.
	Logarithmic strain allows for a transfer of the monotonicity framework from the compressible to the incompressible case.
	Since $\TBiot(U)=\D_U \What(U)$, {even in the compressible case}, the monotonicity of $\TBiot$ in terms of $\log U$ is not a convexity requirement on the energy expressed as a function of $\log U$.
	Rather, {in the compressible case}, one has to check
	\begin{align}
		\label{con}
		\sym \D_{\log U} & (\D_U\What(U))\in \Sym_4^{++}(6) \notag                                                                                                                                       \\
		& \iff \quad \sym \D_{(\log\lambda_1,\log\lambda_2,\log\lambda_3) }(\D_{(\lambda_1,\lambda_2,\lambda_3) }g((\lambda_1,\lambda_2,\lambda_3) ))\in \Sym^{++}(3)\,.
	\end{align}
	Strict logarithmic monotonicity implies injectivity of the corresponding load--stretch relation. In the incompressible biaxial setting, the analogous reduced condition therefore excludes multiple stretch states corresponding to the same reduced dead load, as occurring in a Treloar-Kearsley type instability.
	All quantities, concepts, constitutive requirements and results have to be formulated directly within the incompressible setting.
	For incompressible materials we consider the same question for the extension-dependent zero-pressure extrinsic Biot-stress tensor $\TBiot^{\rm ext,0}$.
	After logarithmic reparametrization, the matrix--vector equivalence follows from Theorem \ref{theorem:mainResult}.
	In contrast, the intrinsic representative $\TBiot^{\rm inc}$ is independent of the chosen ambient extension, while $\TBiot^{\rm ps}$ corresponds to the separate plane-stress normalization obtained after fixing a distinguished third direction.
	Moreover, we prove that for the plane-stress normalization, characterized by a vanishing third principal Biot stress, the monotonicity is ensured by
	\begin{align}
		\label{110c}
		\sym \D_{(\log \lambda_1,\log \lambda_2)} & [\D_{(\lambda_1,\lambda_2)} g^{\rm red}( \lambda_1, \lambda_2)] {=\sym \. [(\D^2_{(\lambda_1,\lambda_2)} g^{\rm red}( \lambda_1, \lambda_2))\diag (\lambda_1,\lambda_2)]}\in \Symp(2)\,,
	\end{align}
	i.e.\ an analogous form of the condition considered in the compressible case. Note that \eqref{110c} is neither the convexity of the map $(\lambda_1,\lambda_2)\mapsto g^{\rm red}(\lambda_1,\lambda_2)$ nor geodesic convexity, see Appendix \ref{geoc}.
	It has become apparent that monotonicity conditions on the Biot stress $\TBiot$ in incompressible nonlinear elasticity are quite awkward since the non-constitutive indeterminate pressure term $p\,U^{-1}$ always intervenes.
	In stark contrast, the logarithmic monotonicity condition for the Cauchy-stress tensors $\sigma$, i.e.
	\begin{align}
		\label{sm}
		\iprod{\sigmahat(\log V_1)- \sigmahat(\log V_2)\,, \log V_1-\log V_2} >0 \qquad \forall\; V_1,V_2\in \Symp(n)\cap \SL(n)\,,
	\end{align}
	immediately cancels the indeterminate pressure, since $p\.\id$ is orthogonal to $\tr(\log V)=0$, and reduces to Hill's inequality for the Kirchhoff stress $\tau$ \cite{agn_ghiba2026polyconvexity,agn_wollner2026concurrent}.
	Therefore, condition \eqref{sm}, which has recently been re-proposed \cite{agn_neff2026rate,neff2024corotational,neff2024natural,d2024constitutive}, appears as a natural constitutive requirement fully consistent with the incompressible limit.
	%
	%
	%
	%
	\section*{Acknowledgement}
		The work of I.D. Ghiba  was supported by a grant of the Ministry of Education and Research, CNCS--UEFISCDI,
		project number PN-IV-P1-PCE2026-0163, within PNCDI IV.
	
	\footnotesize
	\section{References}
	\printbibliography[heading=none]

@article{anssari2023large,
  title={Large isotropic elastic deformations: on a comprehensive model to correlate the theory and experiments for incompressible rubber-like materials},
  author={Anssari-Benam, Amin},
  journal={Journal of Elasticity},
  volume={153},
  number={2},
  pages={219--244},
  year={2023}
}

@article{anssari2022modelling,
  title={Modelling brain tissue elasticity with the {Ogden} model and an alternative family of constitutive models},
  author={Anssari-Benam, Amin and Destrade, Michel and Saccomandi, Giuseppe},
  journal={Philosophical Transactions of the Royal Society A: Mathematical, Physical and Engineering Sciences},
  volume={380},
  number={2234},
  year={2022}
}

@article{baaser2026hyperelastic,
  title={Hyperelastic stability landscape: A check for {Hill} stability of isotropic, incompressible hyperelasticity depending on material parameters},
  author={Baaser, Herbert},
  journal={Journal of Elasticity},
  volume={158},
  number={1},
  pages={8},
  year={2026}
}

@article{batra2005treloar,
  title={Treloar's biaxial tests and {Kearsley's} bifurcation in rubber sheets},
  author={Batra, Romesh C. and Mueller, Ingo and Strehlow, Peter},
  journal={Mathematics and Mechanics of Solids},
  volume={10},
  number={6},
  pages={705--713},
  year={2005}
}

@article{birkhoff1946three,
  title={Three observations on linear algebra},
  author={Birkhoff, Garrett},
  journal={Revista de la Universidad Nacional de Tucum{\'a}n, Serie A},
  volume={5},
  pages={147--151},
  year={1946}
}

@book{DoCarmo,
  title={Differential Geometry of Curves and Surfaces},
  author={do Carmo, Manfredo P.},
  publisher={Prentice Hall},
  address={Englewood Cliffs, New Jersey},
  year={1976}
}

@book{hardy1952inequalities,
  title={Inequalities},
  author={Hardy, Godfrey H. and Littlewood, John E. and P{\'o}lya, George},
  publisher={Cambridge University Press},
  year={1952}
}

@article{hill1968constitutiveb,
  title={On constitutive inequalities for simple materials---{II}},
  author={Hill, Rodney},
  journal={Journal of the Mechanics and Physics of Solids},
  volume={16},
  number={5},
  pages={315--322},
  year={1968}
}

@article{horn1954doubly,
  title={Doubly stochastic matrices and the diagonal of a rotation matrix},
  author={Horn, Alfred},
  journal={American Journal of Mathematics},
  volume={76},
  number={3},
  pages={620--630},
  year={1954}
}

@book{MarshallOlkinArnold,
  title={Inequalities: Theory of Majorization and Its Applications},
  author={Marshall, Albert W. and Olkin, Ingram and Arnold, Barry C.},
  series={Springer Series in Statistics},
  publisher={Springer},
  year={2011}
}

@incollection{ogden2001elements,
  title={Elements of the theory of finite elasticity},
  author={Ogden, Raymond W.},
  booktitle={Nonlinear Elasticity: Theory and Applications},
  year={2001}
}

@article{carlson1986derivative,
	author  = {Carlson, Donald E. and Hoger, Anne},
	title   = {The derivative of a tensor-valued function of a tensor},
	journal = {Quarterly of Applied Mathematics},
	volume  = {44},
	number  = {3},
	pages   = {409--423},
	year    = {1986}
}

@article{chadwick1971theorem,
	author  = {Chadwick, P. and Ogden, R. W.},
	title   = {A theorem of tensor calculus and its application to isotropic elasticity},
	journal = {Archive for Rational Mechanics and Analysis},
	volume  = {44},
	pages   = {54--68},
	year    = {1971}
}

@article{agn_neff2014rediscovering,
	title={Rediscovering {G}.\,{F}.\ {B}ecker's early axiomatic deduction of a multiaxial nonlinear stress--strain relation based on logarithmic strain},
	author={Neff, Patrizio and M{\"u}nch, Ingo and Martin, Robert J.},
	journal={Mathematics and Mechanics of Solids},
	year={2016},
	publisher={SAGE Publications},
	doi={10.1177/1081286514542296},
	volume={21},
	number={7},
	pages={856--911},
	class =	{mathematics}
}

@article{agn_neff2014grioli,
	title={On Grioli's minimum property and its relation to {C}auchy's polar decomposition},
	author={Neff, Patrizio and Lankeit, Johannes and Madeo, Angela},
	journal={International Journal of Engineering Science},
	volume={80},
	pages={209--217},
	year={2014},
	publisher={Elsevier},
	doi={10.1016/j.ijengsci.2014.02.026},
	class =	{matrix}
}

@article{agn_lehmich2013convexity,
	title={On the convexity of the function {$C\mapsto f (\det C)$} on positive-definite matrices},
	author={Lehmich, Stephan and Neff, Patrizio and Lankeit, Johannes},
	journal={Mathematics and Mechanics of Solids},
	pages={1081286512466099},
	year={2013},
	publisher={SAGE Publications},
	doi={10.1177/1081286512466099},
	class =	{matrix}
}

@article{agn_martin2015some,
	title={Some remarks on the monotonicity of primary matrix functions on the set of symmetric matrices},
	author={Martin, Robert J. and Neff, Patrizio},
	journal={Archive of Applied Mechanics},
	volume={85},
	number={12},
	pages={1761--1778},
	year={2015},
	publisher={Springer},
	doi={10.1007/s00419-015-1017-4},
	class =	{matrix}
}

@article{agn_neff2015exponentiatedI,
	title={The exponentiated {H}encky-logarithmic strain energy. {P}art {I}: {C}onstitutive issues and rank-one convexity},
	author={Neff, Patrizio and Ghiba, Ionel-Dumitrel and Lankeit, Johannes},
	journal={Journal of Elasticity},
	volume={121},
	number={2},
	pages={143--234},
	year={2015},
	publisher={Springer},
	doi={10.1007/s10659-015-9524-7},
	class =	{mathematics}
}

@article{agn_neff2015geometry,
	title =		{Geometry of logarithmic strain measures in solid mechanics},
	author =	{Neff, Patrizio and Eidel, Bernhard and Martin, Robert J.},
	journal =	{Archive for Rational Mechanics and Analysis},
	volume =	{222},
	number =	{2},
	pages =		{507--572},
	note =		{\availableatarxiv{1505.02203}},
	year =		{2016},
	doi =		{10.1007/s00205-016-1007-x},
	class =	{mathematics}
}

@article{agn_thiel2019empirical,
  title =	{Do we need {T}ruesdell's empirical inequalities? {O}n the coaxiality of stress and stretch},
  author =	{Thiel, Christian and Voss, Jendrik and Martin, Robert J. and Neff, Patrizio},
  journal =	{International Journal of Non-Linear Mechanics},
  year =	{2019},
  publisher={Elsevier},
  note =	{\availableatarxiv{1812.03053}},
  doi=		{10.1016/j.ijnonlinmec.2019.02.004},
  class =	{mathematics}
}

@article{agn_neff2020axiomatic,
  title={The axiomatic introduction of arbitrary strain tensors by Hans Richter -- a commented translation of \enquote{Strain tensor, strain deviator and stress tensor for finite deformations}},
  author={Neff, Patrizio and Graban, Kai and Schweickert, Eva and Martin, Robert J},
  journal={Mathematics and Mechanics of Solids},
  volume={25},
  number={5},
  pages={1060--1080},
  year={2020},
  publisher={Sage Publications},
  note = {\availableatarxiv{1901.00058}}
}

@article{agn_voss2019volIsLog,
  title =	{Sharp rank-one convexity conditions in planar isotropic elasticity for the additive volumetric-isochoric split},
  author =	{Voss, Jendrik and Ghiba, Ionel-Dumitrel and Martin, Robert J. and Neff, Patrizio},
  journal={Journal of Elasticity},
  volume={143},
  number={2},
  pages={301--335},
  year={2021},
  publisher={Springer},
  note={\availableatarxiv{2008.04188}},
}

@article{neff2024corotational,
  title={The corotational stability postulate: positive incremental Cauchy stress moduli for diagonal, homogeneous deformations in isotropic nonlinear elasticity},
  author={Neff, Patrizio and Husemann, Nina J and Tchakoutio, Aur{\'e}lien S Nguetcho and Korobeynikov, Sergey N and Martin, Robert J},
  journal={International Journal of Non-Linear Mechanics},
  volume={174},
  pages={105033},
  year={2025},
  publisher={Elsevier},
  note={\availableatarxiv{2411.12552}}
}

@article{neff2024hypo,
  title={Hypo-elasticity, Cauchy-elasticity, corotational stability and monotonicity in the logarithmic strain},
  author={Neff, Patrizio and Holthausen, Sebastian and d'Agostino, Marco Valerio and Bernardini, Davide and Sky, Adam and Ghiba, Ionel-Dumitrel and Martin, Robert J},
  journal={Journal of the Mechanics and Physics of Solids},
  volume={202},
  pages={106074},
  year={2025},
  publisher={Elsevier},
  note={\availableatarxiv{2409.20051}}
}

@article{neff2024natural,
  title={A natural requirement for objective corotational rates--on structure preserving corotational rates},
  author={Neff, Patrizio and Holthausen, Sebastian and Korobeynikov, Sergey N and Ghiba, Ionel-Dumitrel and Martin, Robert J},
  journal={submitted},
  note={\availableatarxiv{2409.19707}},
  year={2024}
}

@article{d2024constitutive,
  title={A constitutive condition for idealized isotropic Cauchy elasticity involving the logarithmic strain},
  author={d'Agostino, Marco Valerio and Holthausen, Sebastian and Bernardini, Davide and Sky, Adam and Ghiba, Ionel-Dumitrel and Martin, Robert J and Neff, Patrizio},
  journal={Journal of Elasticity},
  volume={157},
  number={1},
  pages={23},
  year={2025},
  publisher={Springer},
  note={\availableatarxiv{2409.01811}}
}

@article{agn_ghiba2025biot,
	title={The Biot stress-right stretch relation for the compressible Neo-Hooke-Ciarlet-Geymonat model and Rivlin's cube problem},
	author={Ghiba, Ionel-Dumitrel and Gmeineder, Franz and Holthausen, Sebastian and Martin, Robert J. and Neff, Patrizio},
	journal={Communications in Mathematical Analysis and Applications},
	volume={4},
	number={1},
	pages={112--150},
	year={2025}
}

@article{agn_ghiba2026polyconvexity,
	title={Polyconvexity implies Hill's inequality in $\mathrm{SL}(2)$},
	author={Ghiba, Ionel-Dumitrel and Wollner, Maximilian P. and Neff, Patrizio},
	journal={European Journal of Mechanics - A/Solids},
	volume={121},
	pages={106296},
	year={2026},
	publisher={Elsevier}
}

@article{agn_voss2026monotonicity,
	title={Monotonicity of isotropic tensor functions on the set of symmetric matrices: completing Rodney Hill's generalization of the Chandler Davis convexity theorem},
	author={Voss, Jendrik and Martin, Robert J. and Ghiba, Ionel-Dumitrel and d'Agostino, Marco Valerio and Neff, Patrizio},
	journal = {to appear in European Journal of Mechanics - A/Solids},
	note =	{\availableatarxiv{2608.07087}},
	year={2026}
}

@article{agn_neff2026rate,
	title={Rate-form equilibrium for an isotropic Cauchy-elastic formulation: Part I: modeling},
	author={Neff, Patrizio and Husemann, Nina J. and Holthausen, Sebastian and Gmeineder, Franz and Blesgen, Thomas},
	journal={Journal of Nonlinear Science},
	volume={36},
	number={1},
	pages={8},
	year={2026},
	publisher={Springer}
}

@article{agn_wollner2026concurrent,
	title={Concurrent enforcement of polyconvexity and true-stress-true-strain monotonicity in incompressible isotropic hyperelasticity: application to neural network constitutive models},
	author={Wollner, Maximilian P. and Klein, Dominik K. and Baaser, Herbert and Holzapfel, Gerhard A. and Neff, Patrizio},
	journal={Journal of the Mechanics and Physics of Solids},
	year={2026},
	note={in press}
}

@article{hill1970,
  title =	{Constitutive inequalities for isotropic elastic solids under finite strain},
  author =	{R. Hill},
  year =		{1970},
  journal =	{Proceedings of the Royal Society of London. A. Mathematical and Physical Sciences},
  volume =	{314},
  pages =	{457--472}
}

@article{richter1948,
  year =			{1948},
  journal =		{Zeitschrift f{\"u}r Angewandte Mathematik und Mechanik},
  volume =		{28},
  number =		{7/8},
  title =		{Das isotrope {E}lastizit{\"a}tsgesetz},
  author =		{Richter, H.},
  pages =		{205--209},
  note =		{\availableaturl[uni-due.de]{https://www.uni-due.de/imperia/md/content/mathematik/ag_neff/richter_isotrop_log.pdf}}
}

@book{Ogden83,
  author = 		{R. W. Ogden},
  title =		{Non-{L}inear {E}lastic {D}eformations.},
  edition =		{1},
  publisher =	{Ellis Horwood},
  series =		{Mathematics and its Applications},
  year =		{1983},
  address =		{Chichester}
}

@article{becker1893,
  author =	{Becker, G. F.},
  title =	{The finite elastic stress-strain function},
  journal =	{American Journal of Science},
  volume =	{46},
  pages =	{337--356},
  year =	{1893},
  note =	{newly typeset version \availableaturl[uni-due.de]{https://www.uni-due.de/imperia/md/content/mathematik/ag_neff/becker_latex_new1893.pdf}}
}

@incollection{truesdell65,
  author =		{C. Truesdell and W. Noll},
  title =		{The Non-Linear Field Theories of Mechanics},
  booktitle =	{Handbuch der {P}hysik},
  page =		{1--591},
  publisher=	{Springer},
  volume =		{III/3},
  editor =		{S. Fl\"ugge},
  year =		{1965},
  address=		{Heidelberg}
}

@article {richter1949verzerrung,
  author =		{Richter, H.},
  title =		{Verzerrungstensor, {V}erzerrungsdeviator und {S}pannungstensor bei endlichen {F}orm\-{\"a}nderungen},
  journal =		{Zeitschrift f{\"u}r Angewandte Mathematik und Mechanik},
  volume =		{29},
  number =		{3},
  issn =		{1521-4001},
  pages =		{65--75},
  year =		{1949},
  note =		{\availableaturl[uni-due.de]{https://www.uni-due.de/imperia/md/content/mathematik/ag_neff/richter_deviator_log.pdf}}
}

@article{ball1976convexity,
  title =		{Convexity conditions and existence theorems in nonlinear elasticity},
  author =		{Ball, J. M.},
  journal =		{Archive for Rational Mechanics and Analysis},
  volume =		{63},
  number =		{4},
  pages =		{337--403},
  year =		{1976},
  publisher =	{Springer}
}

@article{Hill68,
  author =	{R. Hill},
  title =	{On constitutive inequalities for simple materials - {I}},
  journal =	{Journal of the Mechanics and Physics of Solids},
  volume=	{11},
  pages=	{229-242},
  year =	{1968}
}

@article{coleman1959thermostatics,
  title={On the thermostatics of continuous media},
  author={Coleman, Bernard D and Noll, Walter},
  journal={Archive for Rational Mechanics and Analysis},
  volume={4},
  number={1},
  pages={97--128},
  year={1959},
  publisher={Springer}
}
	\small
	\appendix
	%
	%
	%
	\section{Extrinsic versus\ intrinsic Biot-stress tensor for incompressible materials}\label{eiA}
	Let us return to the full form of the Biot-stress tensor for an incompressible material. 
	We consider an isotropic energy $\What\col \Symp(n)\to\R$ in terms of the right stretch tensor $U$ and its restriction
	\begin{align}
		\What^{\rm inc}\col \mathcal M\to\R\,,
		\qquad	\mathcal M=\{U\in\Symp(n)\mid \det U=1\}\,.
	\end{align}
	The tangent space of $\mathcal M$ at $U\in\mathcal M$ is given by
	\begin{align}
		T_U\mathcal M=\left\{H\in\Sym(n)\,\middle|\,\tr(U^{-1}H)=0\right\}.\,
	\end{align}
	Thus, $U^{-1}$ is normal to the tangent space in the sense that
	\begin{align}
		\iprod{U^{-1},H}=\tr(U^{-1}H^T)=\tr(U^{-1}H)=0
		\qquad	\forall\; H\in T_U\mathcal M\,.
	\end{align}
	Consequently, if $T\in\Sym(n)$ represents the differential of the constrained energy at $U$, then so does $T-p\,U^{-1}$ for any $p\in\R$, since
	\begin{align}
		\iprod{T-p\,U^{-1},H}
		=\iprod{T,H}
		\qquad\forall H\in T_U\mathcal M\,.
	\end{align}
	\textbf{This is the origin of the arbitrary pressure $p$ in incompressible elasticity.}
	In particular, the \emph{extrinsic Biot-stress representatives} are of the form
	\begin{align}
		\TBiot^{\rm ext}(U)\colonequals\D_U\big[\What(U)-p\,(\det U-1)\big]
		=\D_U\What(U)-p\,U^{-1},
		\label{eq:extrinsic-Biot-general}
	\end{align}
	where $p$ is an arbitrary scalar Lagrange multiplier, cf.\ \cite[Eq.~(2.81)]{ogden2001elements}.
	Hence, the incompressible energy does not determine one unique ambient Biot-stress tensor, but rather the family
	\begin{align}
		\left\{\D_U\What(U)-p\,U^{-1}\,\middle|\,p\in\R\right\}.
		\label{eq:Biot-equivalence-class}
	\end{align}
	In terms of the logarithmic strain we define $\Wtilde(X)\colonequals \What(\exp X)$ for $X\in\Sym(n)$ and the corresponding incompressibility constraint $\det U=1$ is equivalent to $\tr(\log U)=0$, so that
	\begin{align}
		\log U\in\dev\Sym(n)
		\qquad\forall\; U\in\mathcal M\,.
	\end{align}
	Accordingly, we define the restricted energy
	\begin{align}
		\Wtilde^{\rm inc}\col\dev\Sym(n)\to\R,
		\qquad	\Wtilde^{\rm inc}(X)=\Wtilde(X)
		\quad\forall\; X\in\dev\Sym(n)\,.
	\end{align}
	If $\Wtilde^{\rm inc}$ is differentiable, then its derivative at $X\in\dev\Sym(n)$ is a linear functional on $\dev\Sym(n)$.
	Since $\dev\Sym(n)$ is a finite-dimensional inner-product space, every linear functional on $\dev\Sym(n)$ has a unique representation with respect to the Frobenius inner product. Hence, there exists a unique tensor $\widetilde\tau^{\rm inc}(X)\in\dev\Sym(n)$ such that
	\begin{align}
		\iprod{\widetilde\tau^{\rm inc}(X)\,,H}
		=\D_X\Wtilde^{\rm inc}(X).H
		\qquad	\forall\; H\in\dev\Sym(n)\,.
		\label{eq:intrinsic-Kirchhoff-Riesz}
	\end{align}
	We call $\widetilde\tau^{\rm inc}$ the \emph{intrinsic Kirchhoff stress tensor}.
	Since $\Wtilde^{\rm inc}$ is the restriction of $\Wtilde$ to $\dev\Sym(n)$, we have
	\begin{align}
		\D_X\Wtilde^{\rm inc}(X).H
		=\iprod{\D_X\Wtilde(X)\,,H}
		=\iprod{\dev\bigl(\D_X\Wtilde(X)\bigr)\,,H}
	\end{align}
	for every $H\in\dev\Sym(n)$. Therefore,
	\begin{align}
		\widetilde\tau^{\rm inc}(X) 
		= \dev\bigl(\D_X\Wtilde(X)\bigr)
		=\D_X\Wtilde(X)-\frac{1}{n}\tr\bigl(\D_X\Wtilde(X)\bigr)\id\,.
		\label{eq:intrinsic-Kirchhoff-dev}
	\end{align}
	On the other hand, every tensor of the form
	\begin{align}
		\widetilde\tau^{\rm ext}(X) = \D_X\Wtilde(X)-p\.\id
		\label{eq:extrinsic-Kirchhoff-family}
	\end{align}
	with arbitrary $p\in\R$ satisfies
	\begin{align}
		\iprod{\widetilde\tau^{\rm ext}(X)\,,H}
		=\D_X\Wtilde^{\rm inc}(X).H
		\qquad\forall\; H\in\dev\Sym(n)\,,
	\end{align}
	since $\iprod{\id,H}=\tr H=0$.
	Thus, all tensors in \eqref{eq:extrinsic-Kirchhoff-family} represent the same constrained first variation.
	The \emph{intrinsic Kirchhoff stress tensor} is obtained by choosing the unique representative which is itself deviatoric.
	Indeed, the condition $\tr\bigl(\widetilde\tau^{\rm ext}(X)\bigr)=0$ yields the particular pressure
	\begin{align}
		p_{\rm inc}(X)
		=\frac{1}{n}\tr\bigl(\D_X\Wtilde(X)\bigr)
		\qquad\implies\qquad
		\widetilde\tau^{\rm inc}(X)	= \D_X\Wtilde(X)-p_{\rm inc}(X)\id.
		\label{eq:p-intrinsic}
	\end{align}
	Hence, the intrinsic stress should be understood as a particular, purely constitutive representative of the equivalence class \eqref{eq:extrinsic-Kirchhoff-family}: it is obtained by removing the spherical part of the Kirchhoff stress.
	For $X=\log U$, the corresponding \emph{intrinsic Biot-stress tensor} is defined by
	\begin{align}
		\TBiot^{\rm inc}(U)
		=	\widetilde\tau^{\rm inc}(\log U)\,U^{-1}.
		\label{eq:intrinsic-Biot-from-Kirchhoff}
	\end{align}
	Isotropy implies that $\D_U\What(U)$ is coaxial \cite{agn_thiel2019empirical} with $U$ and therefore also with $\log U$. Hence, for $X=\log U$, the chain rule for $\Wtilde(X)=\What(\exp X)$ reduces to $\D_X\Wtilde(X)=\D_U\What(U)\.U$.
	Thus in the isotropic case, $\D_{\log U}\Wtilde(\log U)\,U^{-1}=\D_U\What(U)$, and we obtain
	\begin{align}
		\TBiot^{\rm inc}(U)
		=\left[\D_{\log U}\Wtilde(\log U) - \frac{1}{n}	\tr\bigl(\D_{\log U}\Wtilde(\log U)\bigr)\id\right]U^{-1}
		=\D_U\What(U)-\frac{1}{n}\tr\bigl(\D_U\What(U)\,U\bigr)U^{-1}.
		\label{eq:intrinsic-Biot-final}
	\end{align}
	This representative is independent of the chosen ambient extension. Indeed, if two extensions $\What_1$ and $\What_2$ coincide on $\mathcal M$, then
	\begin{equation}
		\D_U\What_2(U)-\D_U\What_1(U)=c(U)\,U^{-1}
	\end{equation}
	for some scalar $c(U)$. The additional normal contribution cancels in \eqref{eq:intrinsic-Biot-final}.
	Thus, the intrinsic Biot-stress tensor corresponds to the pressure choice
	\begin{align}
		p_{\rm inc}(U)
		=\frac{1}{n}\tr\bigl(\D_U\What(U)\,U\bigr).
		\label{eq:p-intrinsic-Biot}
	\end{align}
	This choice should be distinguished from other possible representatives of the same constrained stress response.
	In particular, for $n=3$ we may choose a principal basis and write
	\begin{align}
		U=\diag(\lambda_1,\lambda_2,\lambda_3)\,,
		\qquad\D_{\log U}\Wtilde(\log U)
		=\diag(\tau_1,\tau_2,\tau_3)\,,
	\end{align}
	where $\tau_i$ denote the principal Kirchhoff stresses associated with an arbitrary ambient extension of the incompressible energy.
	Then the principal values of the extrinsic Biot-stress tensor are
	\begin{align}
		t_i^{\rm ext}
		=\frac{\tau_i-p}{\lambda_i}\,,
		\qquad i=1,2,3\,.
		\label{eq:extrinsic-Biot-principal}
	\end{align}
	For the intrinsic representative, \eqref{eq:p-intrinsic} gives
	\begin{align}
		p_{\rm inc}
		=\frac{\tau_1+\tau_2+\tau_3}{3}
		\qquad\implies\qquad
		\tau_i^{\rm inc}
		=\tau_i-\frac{\tau_1+\tau_2+\tau_3}{3}\,,
		\qquad i=1,2,3\,.
		\label{eq:intrinsic-principal-Kirchhoff}
	\end{align}
	In particular, $\tau_1^{\rm inc}+\tau_2^{\rm inc}+\tau_3^{\rm inc}=0$.
	The corresponding principal intrinsic Biot stresses are $t_i^{\rm inc}=\frac{\tau_i^{\rm inc}}{\lambda_i}$.
	
	A different representative is obtained by prescribing vanishing Biot stress in the third principal direction.
	If this direction is distinguished, e.g.\ as the thickness direction in a plane-stress setting, we impose $t_3=0$.
	According to \eqref{eq:extrinsic-Biot-principal}, this is equivalent to the pressure choice $p_{\rm ps}=\tau_3$.
	Hence,
	\begin{align}
		\matr{t_1^{\rm ps}\\t_2^{\rm ps}\\t_3^{\rm ps}}
		=\matr{\dfrac{\tau_1-\tau_3}{\lambda_1}\\\dfrac{\tau_2-\tau_3}{\lambda_2}\\0}.
		\label{eq:plane-stress-Biot-principal}
	\end{align}
	Equivalently, if $N_3$ denotes the projector onto the distinguished third principal direction, then
	\begin{align}
		\TBiot^{\rm ps}(U)
		=\D_U\What(U) - \iprod{\D_U\What(U)\,U,N_3}U^{-1}.
		\label{eq:plane-stress-Biot-tensor}
	\end{align}
	Note that this choice singles out one particular principal direction and should therefore not be confused with the intrinsic representative, which is defined without such a distinguished direction.
	
	We finally relate both representatives directly to the reduced incompressible energy.
	With $x_i=\log\lambda_i$ the incompressibility condition $\lambda_1\lambda_2\lambda_3=1$ becomes
	\begin{align}
		x_1+x_2+x_3=0
		\qquad\iff\qquad
		x_3=-x_1-x_2\,.
	\end{align}
	Let $\gtilde$ denote the representation of $\Wtilde$ in terms of the logarithmic principal stretches, i.e.	\begin{align}
		\gtilde(x_1,x_2,x_3)
		=\Wtilde\bigl(\diag(x_1,x_2,x_3)\bigr)\,,
	\end{align}
	and define the reduced energy by
	\begin{align}
		\gtilde_{\rm red}(x_1,x_2)
		=\gtilde(x_1,x_2,-x_1-x_2)\,.
		\label{eq:log-reduced-energy}
	\end{align}
	Since $\tau_i=\partial_{x_i}\gtilde(x_1,x_2,x_3)$, the chain rule yields
	\begin{align}
		\D\gtilde_{\rm red}(x_1,x_2)
		=\D_{(x_1,x_2)}\gtilde(x_1,x_2,-x_1-x_2)
		=\matr{\tau_1-\tau_3\\\tau_2-\tau_3}\equalscolon q.
		\label{eq:reduced-gradient-stress-differences}
	\end{align}
	Thus, the reduced energy does not determine the absolute values of all three ambient principal Kirchhoff stresses.
	Instead, it determines the two pressure-independent stress differences $q_1=\tau_1-\tau_3$ and $q_2=\tau_2-\tau_3$.
	Indeed, replacing all principal stresses by
	\begin{align}
		(\tau_1,\tau_2,\tau_3)\longmapsto(\tau_1-p,\tau_2-p,\tau_3-p)
	\end{align}
	leaves both differences $q_1$ and $q_2$ unchanged. However, once a particular representative is prescribed, the reduced energy determines that representative uniquely.
	For the plane-stress choice $p_{\rm ps}=\tau_3$, equation \eqref{eq:reduced-gradient-stress-differences} immediately yields
	\begin{align}
		\matr{\tau_1^{\rm ps}\\	\tau_2^{\rm ps}\\ \tau_3^{\rm ps}}
		=	\matr{q_1\\	q_2\\0}
		\qquad\implies\qquad
		\matr{t_1^{\rm ps}\\ t_2^{\rm ps}\\ t_3^{\rm ps}}
		=\matr{\tau_1^{\rm ps}/\lambda_1\\ \tau_2^{\rm ps}/\lambda_2\\	\tau_3^{\rm ps}/\lambda_3}
		=\matr{q_1/\lambda_1\\ q_2/\lambda_2\\	0}.
		\label{eq:plane-stress-from-reduced-energy}
	\end{align}
	On the other hand, for the intrinsic representative, we require
	\begin{align}
		\tau_1^{\rm inc}+\tau_2^{\rm inc}+\tau_3^{\rm inc}=0\,,
		\qquad		\tau_1^{\rm inc}-\tau_3^{\rm inc}=q_1\,,
		\qquad\tau_2^{\rm inc}-\tau_3^{\rm inc}=q_2\,,
	\end{align}
	which uniquely determines
	\begin{align}
		\matr{\tau_1^{\rm inc}\\\tau_2^{\rm inc}\\\tau_3^{\rm inc}}
		=\frac{1}{3}\matr{2 & -1\\-1 & 2\\	-1 & -1}q
		=\frac{1}{3}\matr{2\.\tau_1-\tau_2-\tau_3\\2\.\tau_2-\tau_1-\tau_3\\2\.\tau_3-\tau_1-\tau_2}
		=\matr{\tau_1\\\tau_2\\\tau_3}-\frac{\tau_1+\tau_2+\tau_3}{3}\matr{1\\1\\1}.
		\label{eq:intrinsic-from-reduced-energy}
	\end{align}
	Consequently,
	\begin{align}
		t_i^{\rm inc}=\frac{\tau_i^{\rm inc}}{\lambda_i}\,,
		\qquad i=1,2,3\,,
	\end{align}
	so that the intrinsic Biot-stress tensor is also completely determined by the reduced energy.
	
	We therefore obtain two different representatives of the same constrained stress response:
	\begin{align}
		p_{\rm inc}=\frac{\tau_1+\tau_2+\tau_3}{3}
		\qquad\text{and}\qquad
		p_{\rm ps}=\tau_3\,.
		\label{eq:two-pressure-choices}
	\end{align}
	The first choice yields the unique deviatoric intrinsic Kirchhoff stress, whereas the second yields a representative with vanishing stress in the distinguished third principal direction.
	Both representatives contain the same underlying information, namely the pressure-independent stress differences encoded by $\D\gtilde_{\rm red}$, but they are in general different tensor fields.
	
	Indeed, they differ only by a normal contribution,
	\begin{align}
		\TBiot^{\rm ps}(U)-\TBiot^{\rm inc}(U)
		=\bigl(p_{\rm inc}(U)-p_{\rm ps}(U)\bigr)U^{-1}.
		\label{eq:representative-difference}
	\end{align}
	Consequently,
	\begin{align}
		\iprod{\TBiot^{\rm ps}(U)-\TBiot^{\rm inc}(U),H}=0
		\qquad\forall\; H\in T_U\mathcal M\,.
	\end{align}
	Thus, both representatives describe exactly the same first variation of the incompressible energy.
	
	This equivalence at the level of the constrained first variation does not mean that arbitrary ambient monotonicity conditions for the two representatives are equivalent.
	In particular, for $U,\widetilde U\in\mathcal M$, the difference $U-\widetilde U$ is in general not an element of either $T_U\mathcal M$ or $T_{\widetilde U}\mathcal M$.
	Hence, the normal contributions in \eqref{eq:representative-difference} do not generally vanish in a two-point pairing of the form
	\begin{align}
		\iprod{T(U)-T(\widetilde U)\,,U-\widetilde U}.
	\end{align}
	For this reason, the intrinsic Biot-stress tensor and the plane-stress representative should be distinguished when formulating monotonicity conditions, even though both are uniquely reconstructed from the same reduced energy once the corresponding choice of representative has been prescribed.
	
	For completeness, the same construction can be formulated  for an isotropic energy $\widehat{\mathcal W}\col \Symp(n)\to\R$ in terms of the left stretch tensor $V$.
	If $F=R\.U=VR$, then
	\begin{align}
		V=R\.UR^T,
		\qquad\log V=R\.(\log U)R^T.
	\end{align}
	By isotropy and the Richter formula, cf.\ \cite{richter1949verzerrung,richter1948,agn_neff2020axiomatic},
	\begin{align}
		\D_{\log U}\Wtilde(\log U)
		=R^T\D_{\log V}\widetilde{\mathcal W}(\log V)R\,.
	\end{align}
	Thus, the intrinsic construction in terms of $U$ given above is equivalent to the corresponding formulation in terms of $V$.
	%
	%
	%
	\section{Comparison between the reduced Hessian and the parametrized Hessian}\setcounter{equation}{0}\label{se4}
	Let $\mathcal S\subset\R_+^3$ be the spectral incompressibility manifold from Definition \ref{def:incompressible-stretch-sets}, and let $g\col \R_+^3\to\R$ be a $C^2$-function denoting the representation of the energy in terms of the singular values of $U=\sqrt{F^T F}$.
	Define the constraint $s(\lambda_1,{\lambda_2},{\lambda_3})=\lambda_1\.{\lambda_2}\.{\lambda_3}-1 .$ Then $\D s(p)=({\lambda_2}\.{\lambda_3},\lambda_1\.{\lambda_3},\lambda_1\.{\lambda_2}).$ At $p=(\lambda_1,{\lambda_2},{\lambda_3})\in \mathcal{S}$, we have
	\begin{align}
		T_p\mathcal{S}
		=
		\left\{(u,v,w)\in\R^3\.|\D s(p).\.(u,v,w)=0\right\}
		=
		\left\{(u,v,w)\.\middle|\.
		\frac{u}{\lambda_1}+\frac{v}{{\lambda_2}}+\frac{w}{{\lambda_3}}=0\right\}.
	\end{align}
	A convenient basis of $T_p\mathcal{S}$ is
	\begin{align}
		b_1=(\lambda_1,-{\lambda_2},0)\,,
		\qquad
		b_2=(\lambda_1,0,-{\lambda_3})\,,
		\qquad
		B=[b_1\ b_2]\,.
	\end{align}
	A point $p\in \mathcal{S}$ is a critical point of $g|_ \mathcal{S}$ if and only if the directional
	derivative of $g$ vanishes along all tangent directions, i.e.
	\begin{align}
		\label{FOC}
		\D g(p).h=0 \qquad \forall\; h\in T_p \mathcal{S}\,.
	\end{align}
	This is equivalent to requiring that $\D g(p)$ is orthogonal to $T_p \mathcal{S}$,
	hence parallel to the normal vector $\left(\frac{1}{\lambda_1},\frac{1}{{\lambda_2}},\frac{1}{{\lambda_3}}\right).$ Therefore, there exists $\vartheta \in\R$ (the Lagrange multiplier associated with the constraint $s(\lambda_1,{\lambda_2},{\lambda_3})=0$) such that $\D g(p)=\vartheta\.\D s(p)$,
	i.e.\ the gradient of $g$ is normal to the constraint manifold. In the present
	case, using $\lambda_1{\lambda_2}{\lambda_3}=1$, one obtains $\frac{\partial g}{\partial \lambda_1}= \frac{\vartheta}{{\lambda_1}},\ \frac{\partial g}{\partial \lambda_2}= \frac{\vartheta}{{\lambda_2}},\ \frac{\partial g}{\partial \lambda_3}= \frac{\vartheta}{{\lambda_3}},$ which yields $\vartheta={\lambda_1} \.\frac{\partial g}{\partial \lambda_1}= {\lambda_2} \.\frac{\partial g}{\partial \lambda_2}= {\lambda_3} \. \frac{\partial g}{\partial \lambda_3}.$ The Hessian of the constraint is
	\begin{align}
		\D ^2 s(p)=
		\matr{
			0           & {\lambda_3} & {\lambda_2} \\
			{\lambda_3} & 0           & {\lambda_1} \\
			{\lambda_2} & {\lambda_1} & 0
		}.
	\end{align}
	At a point satisfying \eqref{FOC}, define the \emph{effective Hessian}
	\begin{align}
		H_{\mathrm{eff}}(p)
		=
		\D^2 g(p)
		-
		\bigl({\lambda_1} \frac{\partial g}{\partial \lambda_1}(p)\bigr)\D^2 s(p)\,.
	\end{align}
	The matrix $H_{\mathrm{eff}}$ represents the
	Hessian of the Lagrangian associated with the constrained problem. It accounts
	for both the curvature of the objective function and the curvature induced by
	the constraint.
	Indeed, let $\gamma(s)\subset \mathcal{S}$ be a smooth curve with $\gamma(0)=p$ and $\gamma'(0)=h\in T_p \mathcal{S}$. Since $s(\gamma(s))=0$, differentiation yields
	\begin{align}
		\iprod{\D s(p)\,,h}= 0\,,
	\end{align}
	and
	\begin{align}
		\iprod{\D^2 s(p).\. h, h} + \iprod{\D s(p)\,,\gamma''(0)}= 0\,.
		\qquad\iff\qquad
		\iprod{\D s(p)\,,\gamma''(0)}
		=
		- \iprod{\D^2 s(p).\. h, h}\,.
	\end{align}
	Moreover,
	\begin{align}
		\frac{{\rm d}^2}{{\rm d}s^2} g(\gamma(s))\big|_{s=0}
		=
		\iprod{\D^2 g(p).h, h} + \iprod{\D g(p)\,,\gamma''(0)}\,.
	\end{align}
	Using the first-order condition $\D g(p)=\vartheta \D s(p)$, we obtain
	\begin{align}
		\frac{{\rm d}^2}{{\rm d}s^2} g(\gamma(s))\big|_{s=0}
		=
		\iprod{\D^2 g(p).h,h}
		-
		\vartheta\.\iprod{\D^2 s(p).h, h}
	\end{align}
	which yields that for any tangent direction $h\in T_p \mathcal{S}$, one has
	\begin{align}
		\frac{{\rm d}^2}{{\rm d}s^2} g(\gamma(s))\big|_{s=0}
		=
		\iprod{H_{\mathrm{eff}}(p).h, h}
	\end{align}
	where $\gamma$ is any curve in $\mathcal{S}$ with $\gamma'(0)=h$.
	Thus, the restriction of $H_{\mathrm{eff}}$ to $T_p\mathcal S$ gives the second variation of the constrained energy and determines the nature of the critical point.
	The multiplier $\vartheta$ quantifies the influence of the constraint on the second-order behavior and appears naturally in the effective Hessian $H_{\mathrm{eff}}=\D^2 g-\vartheta\D^2 s$.
	Restricting $H_{\mathrm{eff}}$ to the tangent space yields the constrained Hessian
	governing local optimality.
	In logarithmic coordinates, where the constraint becomes linear, this corresponds to the standard Hessian of the reparametrized function.
	The Hessian restricted to tangent directions is
	\begin{align}
		H_{\mathrm{tan}}(p)
		=
		B^T H_{\mathrm{eff}}(p) B
		\in\R^{2\times2}.
	\end{align}
	We compare two second--order objects:
	\begin{itemize}
		\item the \emph{reduced (tangential) Hessian}
		arising from the constrained second--order condition;
		\item the Hessian of the parametrized function on the constraint
		\begin{align}
			g^{\rm red}({\lambda_1},{\lambda_2})=g\left({\lambda_1},{\lambda_2},\frac{1}{{\lambda_1}{\lambda_2}}\right).
		\end{align}
	\end{itemize}
	We show by explicit calculation that these matrices represent the \emph{same quadratic form}
	on the tangent space of $\mathcal{S}$.
	Define the parametrization
	\begin{align}
		\psi({\lambda_1},{\lambda_2})=\bigl(X,Y,Z\bigr)
		=
		\left({\lambda_1},\ {\lambda_2},\ {\lambda_3}({\lambda_1},{\lambda_2})\right),
		\qquad\qquad
		{\lambda_3}({\lambda_1},{\lambda_2})=\frac{1}{{\lambda_1}{\lambda_2}}\,.
	\end{align}
	The Jacobian $\widetilde J=\D\psi({\lambda_1},{\lambda_2})\in\R^{3\times2}$ has columns
	\begin{align}
		\frac{\partial \psi}{\partial \lambda_1}=\left(1,0,\frac{\partial \lambda_3}{\partial \lambda_1}\right),
		\qquad	\qquad
		\frac{\partial \psi}{\partial \lambda_2}=\left(0,1,\frac{\partial \lambda_3}{\partial \lambda_2}\right).
	\end{align}
	Since ${\lambda_3}={\lambda_1}^{-1}{\lambda_2}^{-1}$, $\frac{\partial \lambda_3}{\partial \lambda_1}=-\frac{{\lambda_3}}{{\lambda_1}}, \frac{\partial \lambda_3}{\partial \lambda_2}=-\frac{{\lambda_3}}{{\lambda_2}},$ we have
	\begin{align}
		\widetilde J=\D \psi=
		\matr{
			1                                & 0                                \\
			0                                & 1                                \\
			-\frac{{\lambda_3}}{{\lambda_1}} & -\frac{{\lambda_3}}{{\lambda_2}}
		}.
	\end{align}
	Using $g^{\rm red}({\lambda_1},{\lambda_2})=g\circ\psi({\lambda_1},{\lambda_2})$, by the chain rule for second derivatives,
	\begin{align}
		\D^2 g^{\rm red}
		=
		\widetilde{J}^T (\D^2 g) \widetilde J
		+
		\frac{\partial g}{\partial \lambda_1}\D^2 X
		+
		\frac{\partial g}{\partial \lambda_2}\D^2 Y
		+
		\frac{\partial g}{\partial \lambda_3}\D^2 Z\,.
	\end{align}
	Since $X={\lambda_1}$ and $Y={\lambda_2}$ are linear, $\D^2 X=\D^2 Y=0,$ and therefore
	\begin{equation}
		\D^2 g^{\rm red}
		=
		\widetilde J^T (\D^2 g) \widetilde J
		+
		\frac{\partial g}{\partial \lambda_3}\D^2 {\lambda_3}\,.
	\end{equation}
	We compute $\frac{\partial }{\partial \lambda_1}{\lambda_3}=-\frac{{\lambda_3}}{{\lambda_1}}$, $\frac{\partial }{\partial\lambda_2}{\lambda_3}=-\frac{{\lambda_3}}{{\lambda_2}}$ and the
	second derivatives $\frac{\partial^2 }{\partial \lambda_1^2}{\lambda_3}=\frac{2{\lambda_3}}{{\lambda_1}^2}$, $\frac{\partial^2 }{\partial \lambda_2^2}{\lambda_3}=\frac{2{\lambda_3}}{{\lambda_2}^2}$, $\frac{\partial^2 }{\partial \lambda_1\lambda_2}{\lambda_3}=\frac{{\lambda_3}}{{\lambda_1}{\lambda_2}}.$ Hence, using $\D^2 {\lambda_3} = \matr{ \frac{2{\lambda_3}}{{\lambda_1}^2} & \frac{{\lambda_3}}{{\lambda_1}{\lambda_2}}\\ \frac{{\lambda_3}}{{\lambda_1}{\lambda_2}} & \frac{2{\lambda_3}}{{\lambda_2}^2} },$ and $\widetilde J^T(\D^2 s)\widetilde J = \matr{-\frac{2}{{\lambda_1}^2} & -\frac{1}{{\lambda_1}{\lambda_2}}\\ -\frac{1}{{\lambda_1}{\lambda_2}} & -\frac{2}{{\lambda_2}^2}}$ we have $\D^2 {\lambda_3} = -{\lambda_3}\. \widetilde J^T(\D^2 s)\widetilde J.$
	We recall that a point $p=({\lambda_1},{\lambda_2},{\lambda_3})\in \mathcal{S}$ is a constrained critical point of $g|_\mathcal{S}$ if and only if
	\begin{align}
		{\lambda_1} \frac{\partial g}{\partial \lambda_1}(p)={\lambda_2} \frac{\partial g}{\partial \lambda_2}(p)={\lambda_3} \frac{\partial g}{\partial \lambda_3}(p)=\vartheta\,.
	\end{align}
	In particular, ${\lambda_3} \frac{\partial g}{\partial \lambda_3} = \vartheta$. Therefore, we deduce
	\begin{align}
		\D^2 g^{\rm red}
		=
		\widetilde J^T (\D^2 g) \widetilde J
		-
		\vartheta\.\widetilde J^T(\D^2 s) \widetilde J
		=
		\widetilde J^T\bigl(\D^2 g-\vartheta\D^2 s\bigr)\widetilde J
		=
		\widetilde J^T H_{\mathrm{eff}} \widetilde J
	\end{align}
	using the \emph{effective Hessian} $H_{\mathrm{eff}}$.
	Since the columns of $B$ and $\widetilde J$ span the same tangent space, there exists an
	invertible matrix $S\in\R^{2\times2}$ such that $B=\widetilde J S.$ Therefore,
	\begin{align}
		H_{\mathrm{tan}}
		=
		S^T (\widetilde J^T H_{\mathrm{eff}} \widetilde J) S
		=
		S^T (\D^2 g^{\rm red}) S\,.
	\end{align}
	Hence $H_{\mathrm{tan}}$ and $\D^2 g^{\rm red}$ are \emph{congruent} matrices and
	represent the same quadratic form.
	At a constrained critical point $p\in\mathcal S$, we therefore have
	\begin{align}
		H_{\mathrm{tan}}\in \Symp(2)
		\quad\iff\quad
		\D^2 g^{\rm red}\in \Symp(2)\,.
	\end{align}
	Thus, positivity of the constrained Hessian is independent of the chosen tangent basis.

	\section{Geodesic convexity on the incompressible stretch manifold}\label{geoc}\setcounter{equation}{0}
	Let $\mathcal S\subset\R_+^3$ be the spectral incompressibility manifold from Definition \ref{def:incompressible-stretch-sets}. It is a two-dimensional smooth submanifold of $\R_+^3$. Its tangent space at
	$\lambda\in\mathcal S$ is given by
	$T_\lambda\mathcal S = \left\{ \xi\in\R^3 \;|\; \sum_{i=1}^3 \frac{\xi_i}{\lambda_i}=0 \right\}.$
	We introduce logarithmic coordinates $x_i=\log \lambda_i, i=1,2,3.$ Then the incompressibility constraint becomes $x_1+x_2+x_3=0.$ Hence $\mathcal S$ is globally parametrized by the linear plane $\Pi \colonequals \left\{ x\in\R^3 \;:\; x_1+x_2+x_3=0 \right\}$ through the componentwise exponential map
	\begin{align}
		\exp\col \Pi\to\mathcal S,
		\qquad
		x\mapsto
		\bigl(e^{x_1},e^{x_2},e^{x_3}\bigr).
	\end{align}
	Conversely,
	\begin{align}
		\log\col \mathcal S\to\Pi,
		\qquad
		\lambda\mapsto
		\bigl(\log\lambda_1,\log\lambda_2,\log\lambda_3\bigr).
	\end{align}
	We endow $\mathcal S$ with the Riemannian metric obtained by pulling back the Euclidean
	metric of $\Pi$ through the logarithmic map. Thus, for
	$\xi,\eta\in T_\lambda\mathcal S$, we set
	\begin{align}
		\mathfrak g_\lambda(\xi,\eta)
		\colonequals
		\iprod{\D\log_\lambda[\xi],
			\D\log_\lambda[\eta]}_{\R^3}
		=
		\sum_{i=1}^3
		\frac{\xi_i\.\eta_i}{\lambda_i^2}.
	\end{align}
	With respect to this logarithmic, or log-Euclidean, Riemannian structure, the map
	\begin{align}
		\log\col (\mathcal S,\mathfrak g)\to \Pi
	\end{align}
	is an isometry.
	Since $\log$ is an isometry onto the flat plane $\Pi$, the geodesics on
	$(\mathcal S,\mathfrak g)$ are precisely the images under the exponential map of
	straight lines in $\Pi$. Therefore, for $\lambda,\mu\in\mathcal S$, the geodesic
	joining $\lambda$ to $\mu$ is
	\begin{align}
		\gamma_{\lambda,\mu}(s)
		=
		\exp\bigl((1-s)\log\lambda+s\log\mu\bigr)\,,
		\qquad s\in[0,1]\,.
	\end{align}
	Equivalently, componentwise,
	\begin{align}
		\gamma_{\lambda,\mu}(s)
		=
		\left(
		\lambda_1^{1-s}\mu_1^s,
		\lambda_2^{1-s}\mu_2^s,
		\lambda_3^{1-s}\mu_3^s
		\right).
	\end{align}
	This curve remains in $\mathcal S$, since
	\begin{align}
		\gamma_1(s)\gamma_2(s)\gamma_3(s)
		=
		(\lambda_1\lambda_2\lambda_3)^{1-s}
		(\mu_1\mu_2\mu_3)^s
		=
		1.
	\end{align}
	Let $g^{\rm inc}\col \mathcal S\to\R$ be a reduced incompressible energy written in terms of
	the principal stretches. We say that $g^{\rm inc}$ is geodesically convex on $\mathcal S$, with
	respect to the logarithmic Riemannian structure, if
	\begin{align}
		g^{\rm inc}(\gamma_{\lambda,\mu}(s))
		\leq
		(1-s)\.g^{\rm inc}(\lambda)+s\. g^{\rm inc}(\mu)
	\end{align}
	for all $\lambda,\mu\in\mathcal S$ and all $s\in[0,1]$. Equivalently,
	\begin{align}
		g^{\rm inc}\left(
		\lambda_1^{1-s}\mu_1^s,
		\lambda_2^{1-s}\mu_2^s,
		\lambda_3^{1-s}\mu_3^s
		\right)
		\leq
		(1-s)\.g^{\rm inc}(\lambda)+s\.g^{\rm inc}(\mu).
	\end{align}
	Define
	\begin{align}
		\gtilde^{\rm inc}\col \Pi\to\R,
		\qquad
		\gtilde^{\rm inc}(x)
		\colonequals
		g^{\rm inc}(e^{x_1},e^{x_2},e^{x_3}).
	\end{align}
	Then
	\begin{align}
		g^{\rm inc} \text{ is geodesically convex on } \mathcal S
		\quad\iff\quad
		\gtilde^{\rm inc} \text{ is convex on } \Pi.
	\end{align}
	Indeed, the geodesic segment on $\mathcal S$ is exactly the exponential image of the
	Euclidean segment in $\Pi$. Hence
	\begin{align}
		\gtilde^{\rm inc}((1-s)\.x+s\.y)
		\leq
		(1-s)\.\gtilde^{\rm inc}(x)+s\.\gtilde^{\rm inc}(y)
	\end{align}
	is precisely
	\begin{align}
		g^{\rm inc}\bigl(\exp((1-s)\.x+s\. y)\bigr)
		\leq
		(1-s)\.g^{\rm inc}(e^x)+s\. g^{\rm inc}(e^y).
	\end{align}
	Putting $\lambda=e^x$ and $\mu=e^y$, this becomes the preceding geodesic convexity
	condition.
	Equivalently, one may use two independent variables. Since $x_3=-x_1-x_2,$ we define
	\begin{align}
		\gtilde_{\rm red}\col \R^2\to\R,
		\qquad
		\gtilde_{\rm red}(x_1,x_2)
		\colonequals
		g^{\rm inc}(e^{x_1},e^{x_2},e^{-x_1-x_2}).
	\end{align}
	Then
	\begin{align}
		g^{\rm inc} \text{ is geodesically convex on } \mathcal S
		\quad\iff\quad
		\gtilde_{\rm red} \text{ is convex on } \R^2.
	\end{align}
	If $g^{\rm inc}$ is continuously differentiable, the convexity of $\gtilde^{\rm inc}$ on the linear
	space $\Pi$ is equivalent to the monotonicity of its derivative, namely
	\begin{align}
		\iprod{\D\gtilde^{\rm inc}(x)-\D\gtilde^{\rm inc}(y)\,,
			x-y}
		\geq 0
		\qquad
		\forall\;x,y\in\Pi.
	\end{align}
	To express this tangential derivative by ambient partial derivatives, let $g$ be any $C^1$ extension of $g^{\rm inc}$ to a neighbourhood of $\mathcal S$ in $\R_+^3$. Writing $\lambda=e^x$, we obtain, for every $h\in\Pi$,
	\begin{align}
		\D\gtilde^{\rm inc}(x)[h]
		=
		\sum_{i=1}^3
		\lambda_i\.\frac{\partial g}{\partial \lambda_i}(\lambda)\.h_i.
	\end{align}
	Thus the derivative of $\gtilde^{\rm inc}$ is represented, on the tangent plane $\Pi$, by
	the vector
	\begin{align}
		\tau^{\rm amb}(\lambda)
		\colonequals
		\left(
		\lambda_1 \frac{
			\partial} {\partial \lambda_1}g(\lambda)\,,
		\lambda_2 \frac{
			\partial} {\partial \lambda_2}g(\lambda)\,,
		\lambda_3 \frac{
			\partial} {\partial \lambda_3}g(\lambda)
		\right).
	\end{align}
	Two such extensions change $\tau^{\rm amb}(\lambda)$ only by a multiple of $(1,1,1)^T$, so the tangential derivative and the monotonicity pairing on $\Pi$ are independent of the chosen extension.
	This is the principal Kirchhoff stress vector modulo the arbitrary hydrostatic pressure.
	Indeed, in the incompressible case the principal Kirchhoff stresses have the form
	\begin{align}
		\tau_i(\lambda)
		=
		\lambda_i \frac{
			\partial} {\partial \lambda_i}g(\lambda)-p,
		\qquad i=1,2,3,
	\end{align}
	where $p\in\R$ is the Lagrange multiplier associated with the constraint
	$\lambda_1\lambda_2\lambda_3=1$. Since $x-y\in\Pi$, we have
	\begin{align}
		\iprod{x-y,(1,1,1)^T}=0.
	\end{align}
	Consequently, the pressure term does not contribute to the monotonicity inequality.
	Therefore, geodesic convexity of $g^{\rm inc}$ on $\mathcal S$, with respect to the logarithmic
	structure, is equivalent to
	\begin{align}
		\sum_{i=1}^3
		\bigl(\tau_i(\lambda)-\tau_i(\mu)\bigr)
		\bigl(\log\lambda_i-\log\mu_i\bigr)
		\geq 0
		\qquad
		\forall\;\lambda,\mu\in\mathcal S.
	\end{align}
	Equivalently,
	\begin{align}
		\iprod{\dev\tau(\lambda)
			-
			\dev\tau(\mu)\,,
			\log\lambda-\log\mu}
		\geq 0
		\qquad
		\forall\;\lambda,\mu\in\mathcal S.
	\end{align}
	Thus, in the incompressible setting, the geodesic convexity of the reduced energy in
	logarithmic stretch variables is precisely the monotonicity of the tangential, or
	deviatoric, part of the Kirchhoff stress with respect to the logarithmic stretches, which is Hill's inequality in the incompressible case \cite[Appendix-D3-Neff]{baaser2026hyperelastic}.

	\section{Alternative proof of the non-strict spectral monotonicity implication}\label{app:alternative-spectral-proof}
	We record here the argument contained in an earlier version of this manuscript. It gives a self-contained proof of the non-strict implication from vector monotonicity to matrix monotonicity, but it is not needed for the main development because Theorem \ref{theorem:mainResult} is now available from \cite{agn_voss2026monotonicity}. The argument does not by itself settle the strict equality case when two distinct matrices are isospectral; the additional equality-case analysis required for strict monotonicity is given in \cite{agn_voss2026monotonicity}.
	\begin{proof}[Alternative proof of the non-strict implication in Theorem \ref{theorem:mainResult}] The first step in the proof of this theorem is to show that
		for every $x\in M\subset\R^n$ and all $i\neq j$,
		\begin{align}
			(x_i-x_j)\.(f_i(x)-f_j(x)) \geq 0.
		\end{align}
		In particular, $x$ and $f(x)$ are sorted in the same algebraic order.
		Fix $x\in M$ and indices $i\neq j$.
		Let $\mathcal{P}^{(i,j)}$ be the elementary permutation matrix which swaps the
		components $i$ and $j$ of a vector.
		Define $y=\mathcal{P}^{(i,j)}x,$ i.e.\ $y$ is obtained from $x$ by exchanging the components $x_i$ and $x_j$ and since $M$ is permutation-invariant, we have $y\in M$.
		Equivalently, if $\pi$ denotes the permutation exchanging $i$ and $j$, then $y=\pi(x).$ By vector monotonicity,
		\begin{align}
			0\leq \iprod{f(x)-f(y)\,,\.x-y} .
		\end{align}
		By permutation equivariance we have $f_i(y)=f_i(x_{\pi(1)},x_{\pi(2)},\dotsc,x_{\pi(n)}) = f_{\pi(i)}(x_1,\dotsc,x_n)$, i.e.\ in terms of the permutation matrix $\mathcal{P}^{(i,j)}$, $f(y)=\mathcal{P}^{(i,j)}f(x)$.
		If $x_i=x_j$, permutation equivariance also implies $f_i(x)=f_j(x)$.
		Hence
		\begin{align}
			0\leq \iprod{f(x)-\mathcal{P}^{(i,j)}f(x)\,,\.x-\mathcal{P}^{(i,j)}x} .
		\end{align}
		Only components $i$ and $j$ contribute since the components which are not permuted lead to zero entries in the differences appearing in the above inner product, giving
		\begin{align}
			0\leq 2\.(f_i(x)-f_j(x))(x_i-x_j).
		\end{align}
		This proves the claim.
		As a consequence, if $x^\downarrow$ denotes $x$ sorted in decreasing order, then
		\begin{align}
			\label{forder}
			f(x)^\downarrow = f(x^\downarrow).
		\end{align}
		The second step is the reduction of the matrix monotonicity to a diagonal--orthogonal form. Let
		\begin{align}
			U=Q^T\.\diag(\lambda)\.Q,\qquad \widetilde{U}=R^T\.\diag(\mu)\.R.
		\end{align}
		Using orthogonal invariance of the trace,
		\begin{align}
			\iprod{\Sigma_f(U)-\Sigma_f(\widetilde{U})\,,\.U-\widetilde{U}}
			=
			\iprod{\diag(f(\lambda))-\widetilde{Q} \diag(f(\mu)) \widetilde{Q} ^T, \diag(\lambda)- \widetilde{Q} \diag(\mu) \widetilde{Q} ^T}
		\end{align}
		where $\widetilde{Q} \colonequals Q\. R^T \in\OO(n)$.
		Thus it suffices to prove
		\begin{equation}
			\label{eq:reduced_problem}
			I\colonequals \iprod{\diag(f(\lambda))-\widetilde{Q} \diag(f(\mu)) \widetilde{Q} ^T, \diag(\lambda)- \widetilde{Q} \diag(\mu) \widetilde{Q} ^T}\geq 0
			\quad \forall\;\widetilde{Q}\in\OO(n)\,.
		\end{equation}
		Expanding \eqref{eq:reduced_problem}, we have
		\begin{align}
			\label{eq:reduced_problem2}
			I = &
			\iprod{\diag(f(\lambda))\,,\diag(\lambda)}
			+\iprod{\diag(f(\mu))\,,\diag(\mu)}\notag                                \\
			& -\iprod{\diag(f(\lambda))\widetilde{Q}, \widetilde{Q}\diag(\mu)}
			-\iprod{\diag(\lambda) \widetilde{Q} , \widetilde{Q}\diag(f(\mu))}.
		\end{align}
		Then, in another step we establish an orthogonal mixing and rearrangement inequality. For this, we remark that for given two vectors $a,b\in\R^n$ and $\widetilde{Q}=(\widetilde{Q}_{ij})\in\OO(n)$,
		\begin{equation}
			\label{eq:trace_identity}
			\iprod{\diag(a) \widetilde{Q}, \widetilde{Q}\diag(b)}= \sum_{i,j=1}^n a_i \.b_j\. \widetilde{Q}_{ij}^2=\iprod{a, D.b},
		\end{equation}
		where $D=(d_{ij})$ is defined by $d_{ij}=\widetilde{Q}_{ij}^2$.
		The matrix $D$ is doubly stochastic, i.e.\ it satisfies
		\begin{align}
			d_{ij}\geq 0,\qquad \qquad
			\sum_{j=1}^n d_{ij}=1,\qquad \qquad
			\sum_{i=1}^n d_{ij}=1,
		\end{align}
		since the orthogonality of $\widetilde{Q}$ is exactly what guarantees that the row vectors and the column vectors have norm 1 and since
		squaring the entries converts those norm-1 conditions into the row and column sum conditions for a doubly stochastic matrix.
		By the Birkhoff--von Neumann theorem \cite{birkhoff1946three,hardy1952inequalities,horn1954doubly}, every doubly stochastic matrix
		can be written as a convex combination of permutation matrices.
		Hence there exist permutation matrices $\mathcal{P}_k$, $k=1,\dotsc,N$, and coefficients ${c_k} \geq 0$, $k=1,\dotsc,N,$ with $\sum_{k=1}^N {c_k} = 1$ such that
		\begin{align}
			D = \sum_{k=1}^N {c_k} \mathcal{P}_k.
		\end{align}
		The map $D \mapsto\iprod{a, D.b}$ is linear in $D$. Therefore,
		\begin{align}
			\iprod{a, D.b}
			=
			\iprod{a, \left( \sum_{k=1}^N {c_k} \mathcal{P}_k \right)\!\!.b}
			=
			\sum_{k=1}^N {c_k} \. \iprod{a, \mathcal{P}_k. b}.
		\end{align}
		Thus $\iprod{a, D.b}$ is a convex combination of the numbers $\iprod{a, \mathcal{P}_k b}$. Since ${c_k} \geq 0$ and $\sum_{k=1}^N {c_k} = 1$,
		the above expression is a convex combination.
		A convex combination cannot exceed the largest term in the list.
		Hence
		\begin{align}
			\iprod{a, D.b}
			\leq
			\max_{\mathcal{P} \ \mathrm{permutation \ matrix}}\iprod{a, \mathcal{P}. b}
		\end{align}
		\medskip
		If $P$ corresponds to a permutation $\pi$, then $\iprod{a, P. b} = \sum_i a_i\. b_{\pi(i)}.$ We recall the rearrangement inequality \cite{hardy1952inequalities,MarshallOlkinArnold}.
		Let $a_1 \leq a_2 \leq \dots \leq a_n, b_1 \leq b_2 \leq \dots \leq b_n$
		be two real sequences ordered increasingly.
		Then for every permutation $\pi$ of $\{1,\dots,n\}$ we have
		\begin{align}
			\sum_{i=1}^{n} a_i \.b_{n-i+1}
			\;\leq\;
			\sum_{i=1}^{n} a_i \.b_{\pi(i)}
			\;\leq\;
			\sum_{i=1}^{n} a_i\. b_i .
		\end{align}
		The maximum of the sum is attained when the sequences are ordered in the
		same way, i.e.
		\begin{align}
			\max_{\pi}\sum_{i=1}^{n} a_i \.b_{\pi(i)}
			=
			\sum_{i=1}^{n} a_i \.b_i ,
		\end{align}
		and the minimum is attained when they are ordered oppositely:
		\begin{align}
			\min_{\pi}\sum_{i=1}^{n} a_i\. b_{\pi(i)}
			=
			\sum_{i=1}^{n} a_i \.b_{n-i+1}.
		\end{align}
		For our vectors $a$ and $b$, let $a^\downarrow$ and $b^\downarrow$ denote the vectors
		obtained by sorting $a$ and $b$ in decreasing order.
		Then
		\begin{align}
			\max_{\mathcal{P} \ \mathrm{perm}} \iprod{a, \mathcal{P}. b}
			=
			\iprod{a^\downarrow, b^\downarrow}.
		\end{align}
		Combining the above steps, for all $\widetilde{Q}\in\OO(n)$, it holds true
		\begin{align}
			\iprod{\diag(a) \widetilde{Q}, \widetilde{Q} \diag(b)}
			=
			\iprod{a, D.b}
			\leq
			\max_{\mathcal{P} \ \mathrm{perm}} \iprod{a, \mathcal{P}. b}
			=
			\iprod{a^\downarrow, b^\downarrow}.
		\end{align}
		Thus
		\begin{equation}
			\label{eq:rearrangement}
			\iprod{\diag(a) \widetilde{Q}, \widetilde{Q}\diag(b)} \leq \iprod{a^\downarrow, b^\downarrow}.
		\end{equation}
		Using \eqref{eq:reduced_problem2} and \eqref{eq:rearrangement}, we deduce
		\begin{align}
			I \geq
			\iprod{f(\lambda)\,,\lambda} + \iprod{f(\mu)\,,\mu}
			- \iprod{f(\lambda)^\downarrow,\mu^\downarrow}
			- \iprod{\lambda^\downarrow,f(\mu)^\downarrow}.
		\end{align}
		By \eqref{forder}, we have $f(\lambda)^\downarrow = f(\lambda^\downarrow)\,, \ f(\mu)^\downarrow = f(\mu^\downarrow)\,,$ and $\iprod{f(\lambda)\,,\lambda} =\iprod{f(\lambda^\downarrow)\,,\lambda^\downarrow}.$ Therefore,
		\begin{align}
			I \geq
			\iprod{f(\lambda^\downarrow)-f(\mu^\downarrow)\,,\.
				\lambda^\downarrow-\mu^\downarrow}.
		\end{align}
		By vector monotonicity we have
		\begin{align}
			\iprod{f(\lambda^\downarrow)-f(\mu^\downarrow)\,,\.
				\lambda^\downarrow-\mu^\downarrow} \geq 0,
		\end{align}
		hence $I$ is nonnegative and the matrix monotonicity follows. {The reverse implication follows immediately by considering the special case of diagonal matrices.}\qedhere
	\end{proof}
\end{document}